\documentclass[12pt]{amsart}

\setlength{\topmargin}{-0.5cm} \setlength{\textwidth}{15cm}
\setlength{\textheight}{22.6cm} \setlength{\topmargin}{-0.25cm}
\setlength{\headheight}{1em} \setlength{\headsep}{0.5cm}
\setlength{\oddsidemargin}{0.40cm}
\setlength{\evensidemargin}{0.40cm}
\usepackage{amsmath, color} \usepackage{amssymb} \usepackage{amsthm}
\usepackage{amscd}
\usepackage{tikz}
\usetikzlibrary{calc}
\usetikzlibrary{patterns}
\usetikzlibrary{matrix}
\usepgflibrary{arrows}
\usetikzlibrary{arrows}
\usetikzlibrary{decorations.pathreplacing}
\usetikzlibrary{decorations.pathmorphing}
\usepackage{epsfig}
\usepackage{enumerate}

\def\Tor{\operatorname{Tor}}

\def\p{{\mathrm{pol\;}}} 

\DeclareMathOperator{\st}{st}
\DeclareMathOperator{\reg}{reg}

\DeclareMathOperator{\mingen}{Mingens}

\theoremstyle{plain}
\newtheorem{theorem}{Theorem}[section]
\newtheorem{corollary}[theorem]{Corollary}
\newtheorem{proposition}[theorem]{Proposition}

\newtheorem{definition}[theorem]{Definition}
\newtheorem{discussion}[theorem]{Discussion}
\newtheorem{lemma}[theorem]{Lemma}
\newtheorem{question}[theorem]{Question}
\newtheorem{eg}[theorem]{Example}

\newtheorem{observation}[theorem]{Observation}

\begin{document} 

\title{The Regularity of Powers of Edge Ideals}
\date{\today}

\author[Banerjee]{Arindam Banerjee}
\address{Department of Mathematics, University of Virginia,
Charlottesville, VA, USA} \email{ab4cb@virginia.edu}

\subjclass[2000]{Primary 13-02, 13F55, 05C10}

\baselineskip 16pt \footskip = 32pt

\begin{abstract}
In this paper we prove the existence of a special order on the set of minimal monomial generators of powers of edge ideals of arbitrary graphs. Using this order we find new upper bounds on the regularity of powers of edge ideals of graphs whose complement does not have any induced four cycle. 
\end{abstract}

\maketitle \markboth{A.Banerjee}
{Bounds on regularity}

\section{Introduction}
\bigskip

In this work we find new upper bounds for the regularity of some classes of monomial ideals associated to graphs.   Our original motivation is the following question, which is the base case of the Open Problem $1.11(2)$ in [13]:

\begin{question}\label{ques1}
Let $I(G)$ be the edge ideal of a graph $G$ which does not have any induced four cycle in its complement. If $\reg(I(G)) \leq 3$, then is it true that for all $s\geq 2$, $I(G) ^ s$ has linear minimal free resolution? 

\end{question}

Bounds on the regularity of edge ideals have been studied  by a number of researchers (see [1], [2], [3], [4], [5], [6], [7],[8] [9], [11],[12],[13]). For example, Fr\"oberg (see [3]) has shown that, when $I(G)$ is the edge ideal of a graph whose complement does not have any induced cycle of size greater than or equal to four, then $I(G)$ has linear minimal free resolution.

  We are interested in finding upper bounds on the regularities of the higher powers of $I(G)$. Herzog, Hibi and Zheng have shown in [6] that if $I(G)$ is the edge ideal of a graph $G$ which has no induced cycle of length greater than or equal to four in its complement (that is $I(G)$ has linear minimal free resolution) then for all $s\geq 2$, $I(G)^s$ has linear minimal free resolution. Fransisco, H\`a and Van-Tuyl have further shown that if $I(G)^s$ has linear minimal free resolution for some $s$, then $G$ has no induced four cycle in its complement ( Proposition $1.8$ in [13]). These two results lead us to study bounds on the regularity of powers of $I(G)$ when $G$ has no induced four cycle in its complement. Our main result is Theorem $6.17$ where we prove all higher powers of edge ideals of a gap free (equivalently, no induced four cycle in complement, as observed in section $2$) and cricket free (defined in section $2$) graph have linear minimal free resolution. More precisely: \\\\
  
\begin{theorem}
For any gap free and cricket free graph $G$ and for all $s \geq 2$, $\reg (I(G)^s) = 2s$ and as a consequence $I(G)^s$ has a linear minimal free resolution.\\\\
\end{theorem}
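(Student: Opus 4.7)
The plan is to proceed by induction on $s$, with the base case $s=2$ expected to be established earlier in the paper by a dedicated argument exploiting the cricket-free hypothesis. Since $I(G)^s$ is minimally generated in degree $2s$, the lower bound $\reg(I(G)^s) \geq 2s$ is automatic, and equality together with linearity of the minimal free resolution follows from the single upper bound $\reg(I(G)^s) \leq 2s$.

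For the inductive step, assume $\reg(I(G)^s) = 2s$ with $s \geq 2$. Listing the minimal monomial generators of $I(G)^s$ as $M_1, M_2, \ldots, M_k$ in the special order established in the earlier sections, I consider the iterated short exact sequences
\[
0 \;\longrightarrow\; \frac{S}{(J_{i-1} : M_i)}(-2s) \;\longrightarrow\; \frac{S}{J_{i-1}} \;\longrightarrow\; \frac{S}{J_i} \;\longrightarrow\; 0,
\]
where $J_0 = I(G)^{s+1}$ and $J_i = J_{i-1} + (M_i)$, so that $J_k = I(G)^{s+1} + I(G)^s = I(G)^s$. Taking regularities and using the inductive hypothesis $\reg(S/I(G)^s) = 2s-1$, the desired bound $\reg(I(G)^{s+1}) \leq 2s+2$ reduces to showing that for each $i$,
\[
\reg\bigl((J_{i-1} : M_i)\bigr) \leq 2,
\]
i.e., each colon ideal $(J_{i-1}:M_i)$ has a linear minimal free resolution.

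The core of the proof is therefore the analysis of these colon ideals. The special order is designed exactly so that $(J_{i-1}:M_i)$ admits a description of the form $I(G) + L_i$ where $L_i$ is generated by variables and certain quadratic monomials encoded by the combinatorial neighborhood of $M_i$ in $G$. After quotienting out the variable generators (which do not affect linearity of the resolution), the problem reduces to proving that the resolution of an edge ideal $I(H)$ is linear, for some graph $H$ closely derived from $G$. The gap-free hypothesis on $G$ transfers to gap-freeness of $H$; however, this alone is insufficient, since Fr\"oberg's theorem requires the complement of $H$ to be chordal, not merely $C_4$-free. This is precisely where the cricket-free hypothesis enters: it rules out the induced configurations in $G$ that would produce induced cycles of length $\geq 5$ in the complement of the derived $H$.

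The main obstacle is therefore the combinatorial verification that the derived graphs $H$ arising from the colon computation have chordal complements. This requires an enumeration of possible local structures of $M_i$ relative to $G$, together with the observation that the special order is rich enough that every configuration producing a forbidden induced subgraph in the complement of $H$ traces back to either a gap or a cricket in $G$. Carrying out this case analysis in a clean and uniform manner, and confirming that the cricket is indeed the only additional obstruction one must forbid beyond gaps, is where I expect the proof to demand the most technical care.
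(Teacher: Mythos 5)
Your architecture matches the paper's exactly: the iterated short exact sequences over the ordered generators are Lemma 5.1/Theorem 5.2, the identification of $(J_{i-1}:M_i)$ as $(I(G)^{s+1}:M_i)$ plus variables is Theorem 4.12, the description of $(I(G)^{s+1}:e_1\cdots e_s)$ as a quadratic ideal built from edges and ``even-connections'' is Theorems 6.1 and 6.7, the transfer of gap-freeness is Lemma 6.14, and the final appeal to Fr\"oberg is Theorem 2.12. (Two small corrections: the colon ideals need not be squarefree, since a vertex can be even-connected to itself, so one must polarize rather than pass directly to an edge ideal $I(H)$; and no dedicated base case at $s=2$ is needed, since Corollary 5.3 runs the induction uniformly from $\reg(I(G))\leq 3$.)

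However, there is a genuine gap: the entire mathematical content of this theorem, beyond the machinery you correctly cite, is the combinatorial verification you defer in your last paragraph, and you do not carry it out. Concretely, the paper does this in two steps. First, Lemma 6.15 shows, using only gap-freeness, that any anticycle $w_1\cdots w_n$ with $n\geq 5$ in the derived graph $G'$ is already an anticycle in $G$ --- i.e., no two $w_i$, $w_{i+j}$ are merely even-connected. Second, the proof of Theorem 6.17 takes such an anticycle in $G$ together with a single edge $e_1=xy$ of the $s$-fold product and, by repeatedly playing $w_1w_3$, $w_3w_n$, etc. against gap-freeness and Observation 6.4 (neighbors of $y$ are joined to neighbors of $x$ in $G'$), forces $w_1,w_2,w_3$ all adjacent to $x$ and none adjacent to $y$, so that $\{y,w_2,x,w_1,w_3\}$ is an induced cricket. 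Without this argument you have only a plan; in particular you have not shown that the cricket (rather than some other configuration) is the right obstruction, nor that the anticycle can be pulled back from $G'$ to $G$, which is what makes the case analysis tractable. The statement ``the special order is rich enough that every forbidden configuration traces back to a gap or a cricket'' is precisely the theorem, not a step toward it.
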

  
    This partilally answers Question $1.1$, as we prove in section $3$ that edge ideals of gap free and cricket free graphs have regularity less than or equal to $3$ (Theorem $3.4$). As claw free graphs (defined in section $2$) are automatically cricket free, our results generalize a previous result by E. Nevo (Theorem $1.2$ of [12]) that says the edge ideals of gap free and claw free  graphs have regularity less than or equal to $3$ and their squares have linear minimal free resolutions.\\\\
    
   In order to prove Theorem $6.17$, we first show that the minimal monomial generators of powers of edge ideal $I(G)$ for any finite simple graph $G$ have specific order that satisfies some nice property (Lemma $4.11$, Theorem $4.12$). More precisely:\\\\
\begin{theorem}
For each $n\geq 1$ there exists an ordered list $L^{(n)}$ of minimal monomial generators of $I(G)^n$ which satisfies the following property:\\For all $k\geq 1$ and for all $j\leq k$, if $(L_j ^{(n)}: L_{k+1} ^{(n)} )$  is not contained in $(I(G)^{n+1}:L_{k+1} ^ {(n)})$ then  there exists $i \leq k$, such that $(L_i ^{(n)}: L_{k+1} ^{(n)})$ is generated by a variable and $(L_j ^{(n)} : L_{k+1} ^{(n)}) \subseteq (L_i ^{(n)} : L_{k+1} ^ {(n)} )$. For monamials $m$ and $n$, $(m:n)$ stands for $((m):(n))$.\\\\
\end{theorem}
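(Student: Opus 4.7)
The plan is to construct the ordered list $L^{(n)}$ by ordering the minimal monomial generators of $I(G)^n$ lexicographically with respect to a fixed total order on the variables (chosen so that the final ordering comparison goes through). For each pair $L_{k+1}^{(n)}$ and $L_j^{(n)}$ with $j \leq k$ satisfying the hypothesis, I will produce an earlier generator $L_i^{(n)}$ differing from $L_{k+1}^{(n)}$ by a single variable-swap in one of its edge factors. Throughout I suppress the superscript $(n)$ and write $L_j$ for $L_j^{(n)}$.

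Setting $w = \gcd(L_j, L_{k+1})$, $u = L_j/w$, and $v = L_{k+1}/w$, the colon $(L_j : L_{k+1})$ is the principal ideal $(u)$, and the hypothesis is equivalent to $u \cdot L_{k+1} \notin I(G)^{n+1}$. If $\deg u = 1$ then $u$ is already a variable and $L_i := L_j$ suffices. For $\deg u \geq 2$, I aim to find a variable $x \mid u$, a vertex $\gamma \in V(L_{k+1})$ with $\{x,\gamma\} \in E(G)$, and an edge $\{\beta,\gamma\}$ appearing in some edge-factorization of $L_{k+1}$ with $\beta \neq x$ and $x$ smaller than $\beta$ in the chosen variable order. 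Given this data, $L_i := L_{k+1} \cdot x/\beta$ is a minimal generator of $I(G)^n$ — its factorization is that of $L_{k+1}$ with the edge $\{\beta,\gamma\}$ replaced by $\{x,\gamma\}$ — and a direct computation gives $(L_i : L_{k+1}) = (x) \supseteq (u) = (L_j : L_{k+1})$. The inequality $x < \beta$ ensures $L_i$ precedes $L_{k+1}$ in the lex order.

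The main obstacle is proving the existence of the triple $(x,\gamma,\beta)$. I begin by observing that the hypothesis $u \cdot L_{k+1} \notin I(G)^{n+1}$ forces the support $V(u)$ to be an independent set in $G$: if $V(u)$ contained both endpoints of an edge $e$ of $G$, that edge would combine with the $n$ edges of $L_{k+1}$ to express $u \cdot L_{k+1}$ as an element of $I(G)^{n+1}$, and similarly $V(v)$ must be independent. Next I rule out the fully isolated scenario in which $V(u)$ is disjoint from $V(L_{k+1})$ and joined to it by no edge of $G$: every edge of $L_j$ lives in $V(u) \sqcup V(w) \subseteq V(u) \sqcup V(L_{k+1})$, and since $V(u)$ is independent with no cross-edges, each such edge must be internal to $V(L_{k+1})$, forcing $V(u) \subseteq V(L_j) \subseteq V(L_{k+1})$ and contradicting disjointness once $V(u) \neq \emptyset$. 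Hence there is always an edge of $G$ from $V(u)$ to $V(L_{k+1})$, or a vertex in $V(u) \cap V(L_{k+1})$, either of which produces a candidate pair $(x,\gamma)$. The remaining work is to locate a suitable $\beta$ in the edge-factorization of $L_{k+1}$ with $\beta \neq x$ and to arrange $x < \beta$ in the chosen variable order; this calls for a case analysis exploiting vertex-multiplicities in $L_{k+1}$ and the freedom to re-choose which edge of $L_{k+1}$ is modified, and constitutes the technical heart of the proof.
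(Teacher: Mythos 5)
Your reductions at the start are sound: for generators of the same degree $2n$, any $L_i$ with $(L_i:L_{k+1})$ generated by a variable is necessarily of the form $L_{k+1}x/\beta$, the hypothesis does force $V(u)$ to be independent, and $V(u)$ always meets or sends an edge into $V(L_{k+1})$. But the step you defer as ``the technical heart'' is not a fillable case analysis within your framework --- the construction you propose (replace a single edge $\{\beta,\gamma\}$ of a factorization of $L_{k+1}$ by $\{x,\gamma\}$) is provably insufficient. Take $G$ with edges $PQ,\ QR,\ Rc,\ Sd,\ Sy,\ xP$, variable order $P>Q>R>S>x>c>d>y$, and $n=3$ with $L_j=(xP)(QR)(Sy)$, $L_{k+1}=(PQ)(Rc)(Sd)$. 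Then $(L_j:L_{k+1})=(xy)$, and $xy\cdot L_{k+1}\notin I(G)^4$ (a witness would be a perfect matching of $\{P,Q,R,S,x,y,c,d\}$, but $xP$, $Sy$, $Rc$ are forced and $Qd$ is not an edge), while $L_j>_{\mathrm{lex}}L_{k+1}$. The factorization of $L_{k+1}$ is unique, $x$ is adjacent only to $P$ and $y$ only to $S$, so your construction produces exactly two candidates, $L_{k+1}x/Q$ and $L_{k+1}y/d$, and both are lex-smaller than $L_{k+1}$ since $Q>x$ and $d>y$. The generator the theorem demands does exist, namely $L_i=(xP)(QR)(Sd)=L_{k+1}x/c$ with $(L_i:L_{k+1})=(x)$ and $x>c$, but its edge factorization differs from that of $L_{k+1}$ in two edges; no single-edge swap reaches it.

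The diagnostic symptom is that you use the hypothesis $(L_j:L_{k+1})\not\subseteq(I^{n+1}:L_{k+1})$ only to conclude that $V(u)$ is independent, and independence is strictly weaker than the hypothesis: for $G$ with edges $R\gamma,\ RA,\ x\gamma,\ cd$ and order $R>\gamma>A>x>c>d$, the pair $L_j=(RA)(x\gamma)$, $L_{k+1}=(R\gamma)(cd)$ has $V(u)=\{A,x\}$ independent and admits no lex-larger $L_i$ at all --- the conclusion is rescued only because $Ax\cdot L_{k+1}=(RA)(x\gamma)(cd)\in I^3$, i.e.\ the hypothesis fails. So any argument extracting only independence from the hypothesis cannot close. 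What is actually needed (if one insists on a variable-lex order) is an alternating-walk argument: starting from $x_0=\max V(u)$, alternate between edges of the factorization of $L_j$ and unused edges of the factorization of $L_{k+1}$; the walk either terminates in $V(v)$ at an even step, yielding a multi-edge swap $L_{k+1}x_0/z$ with $z\in V(v)$ (hence $z<x_0$ because $L_j>_{\mathrm{lex}}L_{k+1}$), or it terminates in $V(u)$ at an odd step, producing exactly the ``even-connection'' of Definition 6.2 and hence contradicting the hypothesis via Theorem 6.5. Note also that the paper does not use a variable-lex order at all: it fixes an arbitrary order on the edges, orders generators of $I^n$ by the lex order on exponent vectors of their maximal expressions as products of edges, and proves the statement by induction on $n$ (Lemma 4.11), peeling off one edge of the factorization at a time; that induction is what substitutes for the walk. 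As written, your proposal omits the entire mechanism that makes the theorem true.
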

   Using this ordering we shall prove that $\reg (I(G)^n)$ is bounded above by the maximum of $\reg (I(G)^n :e_1...e_{n-1})+2n-2$ for all possible $(n-1)$-fold products of edges $e_1...e_{n-1}$ and $\reg (I(G)^{n-1})$ (See Theorem $5.2$). Next we prove that the ideals $(I(G)^n :e_1...e_{n-1})$ are quadratic monomial ideals with generators satisfying certain conditions (See Theorems $6.1,6.5,6.7$). Finally, by using polarization technique we get edge ideals corresponding to these quadratic monomial ideals with same regularity (See [9], Section $3.2$ and Exercise $3.15$ of [10] for details) and using Fr\"oberg's theorem (See Theorem $1$ of [3] and Theorem $[1.1]$ of [13]) get bounds on them. As a consequence we also get a different proof of the Herzog, Hibi and Zheng's result mentioned above (Theorem $6.16$).\\\\

\bigskip

\section{Preliminaries}


Throughout this paper, we let $G$ be a finite simple graph with vertex set $V(G)$. 
For $u, v \in V(G)$, we let $d(u,v)$ denote the \emph{distance} between $u$ and $v$, the
fewest number of edges that must be traversed to travel from $u$ to $v$.    

A subgraph $G' \subseteq G$ is called \emph{induced} if $uv$ is an edge of $G'$ whenever $u$ and $v$ are vertices of $G'$ and $uv$ is an edge of $G$.

The \emph{complement} of a graph $G$, for which we write $G^c$, is the graph on the same vertex set in which $uv$ is an edge of $G^c$ if and only if it is not an edge of $G$.  

Finally, let $C_k$ denote the cycle on $k$ vertices, and we let $K_{m, n}$ denote the complete bipartite graph with $m$ vertices on one side, and $n$ on the other.  

\begin{definition}
Let $G$ be a graph.  We say two disjoint edges $uv$ and $xy$ form a \emph{gap} in $G$ if $G$ does not have an edge with one 
endpoint in $\{u,v\}$ and the other in $\{x,y\}$.
A graph without gaps is called \emph{gap-free}.  Equivalently, $G$ is gap-free if and only if $G^c$ contains no induced $C_4$.
\end{definition}

Thus, $G$ is gap-free if and only if it does not contain two vertex-disjoint edges as an induced subgraph.  


\begin{definition}
Any graph isomorphic to $K_{1, 3}$ is called a \emph{claw}. Any graph isomorphic to $K_{1,n}$ is called an \emph{n-claw}. If $n >1$, the vertex with degree $n$ is called the root in $K_{1,n}$.
A graph without an induced claw is called \emph{claw-free}. A graph without an induced \emph{n-claw} is called \emph{n-claw-free}.
\end{definition}

\begin{definition}
Any graph isomorphic to the graph with set of vertices $\{w_1,w_2,w_3,w_4,\\w_5\}$ and set of edges $\{w_1w_3,w_2w_3,w_3w_4,w_3w_5,w_4w_5\}$ is called a cricket.
A graph without an induced cricket is called \emph{cricket-free}. 
\end{definition}

\begin{definition}
An edge in a graph is called a whisker if any of its vertices has degree one.
\end{definition}

\begin{definition}
A graph is called anticycle if its complement is a cycle.
\end{definition}

\begin{observation}
A claw-free graph is cricket-free.
\end{observation}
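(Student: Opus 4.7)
The plan is to argue by contrapositive: I will show that any graph containing an induced cricket must also contain an induced claw, so the absence of an induced claw forces the absence of an induced cricket.

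First I would unpack the definition of the cricket. It is the graph on $\{w_1,w_2,w_3,w_4,w_5\}$ with edges $\{w_1w_3,\,w_2w_3,\,w_3w_4,\,w_3w_5,\,w_4w_5\}$. In particular, $w_3$ is adjacent to each of $w_1,w_2,w_4,w_5$, while the only edge among $\{w_1,w_2,w_4,w_5\}$ is $w_4w_5$.

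Next I would exhibit an induced claw inside this cricket. The natural candidate is the subgraph induced by $\{w_3,w_1,w_2,w_4\}$. The edges present among these four vertices in the cricket are exactly $w_1w_3$, $w_2w_3$, and $w_3w_4$, since none of $w_1w_2$, $w_1w_4$, $w_2w_4$ belong to the edge set. This is precisely a copy of $K_{1,3}$ with center $w_3$, so it is an induced claw.

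Finally, I would conclude: if $G$ has an induced cricket on some five vertices, then restricting to the four vertices above yields an induced claw in $G$ (induced subgraphs of induced subgraphs are induced). Hence a claw-free graph has no induced cricket, i.e.\ is cricket-free. There is no real obstacle here; the only thing to be careful about is verifying that the chosen four vertices induce exactly $K_{1,3}$ in the cricket, which reduces to checking the three non-edges $w_1w_2$, $w_1w_4$, $w_2w_4$.
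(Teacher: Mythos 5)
Your argument is correct: the four vertices $\{w_1,w_2,w_3,w_4\}$ of the cricket do induce a $K_{1,3}$ with root $w_3$, so an induced cricket yields an induced claw and the contrapositive gives the claim. The paper states this as an observation without proof, and your verification is exactly the intended (and essentially only) argument.
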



 If $G$ is a graph without isolated vertices then let $S$ denote the polynomial ring on the vertices of $G$ over some fixed field $K$.  Recall that the \emph{edge ideal} of $G$ is 
\[
I(G) = (xy: xy \text{ is an edge of } G).
\]


\begin{definition}
Let $S$ be a standard graded polynomial ring over a field $K$. The Castelnuovo-Mumford regularity of a finitely generated graded $S$ module $M$, written $\reg(M)$ is given by $$\reg(M):= \max \{j-i|\Tor_{i} (M,K)_j \neq 0 \}$$
\end{definition}

\begin{definition}
We say that $I(G)^s$ is \emph{$k$-steps linear} whenever the minimal free resolution of $I(G)^s$ over the polynomial ring
is linear for $k$ steps, i.e., $\Tor_{i}^S(I(G)^s,K)_j = 0$ for all $1\leq i\leq k$ and all $j\ne i+2s$. We say $I(G)$ has linear minimal free resolution if the minimal free resolution is $k$-steps linear for all $k \geq 1$.
\end{definition}

 We end this section by recalling a few well known results. We refer reader to [1] and [13] for reference.

\begin{observation} 
Let $I(G)$ be the edge ideal of a graph $G$. Then $I(G)^s$ has linear minimal free resolution if and only if $\reg (I(G)^s)=2s$.
\end{observation}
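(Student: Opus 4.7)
The plan is to exploit the fact that $I(G)^s$ is generated in a single degree, namely $2s$, and then apply the definitions of regularity and linear resolution directly. I would begin by noting that since $I(G)$ is generated by squarefree quadratics, every minimal monomial generator of $I(G)^s$ is a product of $s$ edges, hence has degree exactly $2s$. In particular, $\Tor_0^S(I(G)^s,K)$ is concentrated in degree $2s$.

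For the ``only if'' direction, I would simply unfold the definitions. If the minimal free resolution is linear, then by the definition given above $\Tor_i^S(I(G)^s,K)_j = 0$ whenever $j \neq i+2s$. Hence every nonzero graded piece of $\Tor$ contributes $j-i = 2s$ to the regularity, yielding $\reg(I(G)^s) = 2s$.

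For the converse, I would combine two half-sided vanishing statements. Assuming $\reg(I(G)^s)=2s$, the definition of regularity gives $\Tor_i^S(I(G)^s,K)_j=0$ for all $j-i>2s$. On the other hand, since $I(G)^s$ is generated in degree $2s$, a standard consequence of minimality of the resolution is that the $i$-th syzygy module is generated in degrees $\geq i+2s$, so $\Tor_i^S(I(G)^s,K)_j=0$ for all $j<i+2s$ as well. Combining these, $\Tor_i^S(I(G)^s,K)_j$ vanishes for every $j\neq i+2s$, which is precisely linearity of the minimal free resolution.

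There is essentially no obstacle here: the only nontrivial ingredient is the elementary lower-bound statement that in a minimal graded free resolution of an ideal generated in degree $d$, one has $\Tor_i(-,K)_j = 0$ for $j<i+d$, which follows by an easy induction on $i$ using minimality (no scalar relations are allowed among minimal generators, forcing syzygy degrees to strictly increase). Thus the observation reduces to a tautology once the single-degree generation of $I(G)^s$ is observed.
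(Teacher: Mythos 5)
Your proof is correct. The paper states this observation without proof, listing it among well-known facts with references; your argument --- single-degree generation of $I(G)^s$ in degree $2s$, unfolding the definition of regularity for one direction, and the minimality-forced vanishing $\Tor_i^S(I(G)^s,K)_j=0$ for $j<i+2s$ for the converse --- is precisely the standard justification the paper implicitly relies on, and the one nontrivial ingredient (syzygy degrees strictly increase in a minimal resolution) is correctly identified and correctly proved by your induction.
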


\begin{lemma}
Let $I \subseteq S$ be a monomial ideal. Then for any variable $x$, $\reg(I,x)\leq \reg(I)$. In particular if $v$ is a vertex in a graph $G$, then $\reg (I(G-v))\leq \reg((I(G))$.
\end{lemma}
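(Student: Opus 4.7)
The plan is to reduce to the auxiliary inequality $\reg(I:x) \le \reg(I)$ via a short exact sequence, and then to deduce the graph-theoretic statement by identifying $(I(G), x_v)$ with $I(G-v)$ after deletion of the variable $x_v$. Since $x$ is a variable, multiplication by $x$ on $S/I$ has kernel $(I:x)/I$, giving the standard short exact sequence
\[
0 \longrightarrow (S/(I:x))(-1) \xrightarrow{\,\cdot x\,} S/I \longrightarrow S/(I,x) \longrightarrow 0.
\]
Applying the classical bound $\reg(C) \le \max\{\reg(A)-1,\, \reg(B)\}$ for any short exact sequence $0 \to A \to B \to C \to 0$ and absorbing the shift $(-1)$, I obtain
\[
\reg\bigl(S/(I,x)\bigr) \le \max\bigl\{\reg\bigl(S/(I:x)\bigr),\, \reg(S/I)\bigr\},
\]
equivalently $\reg((I,x)) \le \max\{\reg(I:x),\, \reg(I)\}$.

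The main obstacle is to verify $\reg(I:x) \le \reg(I)$ for a monomial ideal $I$; this is the delicate step, and is where the monomial hypothesis is essential (it can fail for general graded ideals). The cleanest route proceeds through the multigraded structure: the minimal generators of $(I:x)$ are obtained from those of $I$ by dividing the ones divisible by $x$ by the appropriate power of $x$. Either via a careful analysis of the Taylor (or Lyubeznik) complex, or via the multigraded Hochster formula for $\beta_{i,\alpha}(S/I)$ in terms of reduced homology of upper intervals in the lcm-lattice of $I$, one shows that the passage $I \mapsto (I:x)$ can only decrease the differences (internal degree) $-$ (homological degree). Granting this, the inequality above becomes $\reg((I,x)) \le \reg(I)$.

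For the graph-theoretic consequence, set $R := S/(x_v) = K[V(G)\setminus\{v\}]$; then there is a natural $S$-module isomorphism $S/(I(G),x_v) \cong R/I(G-v)$ in which $x_v$ acts as zero. A standard change-of-rings argument, using that the Koszul complex $0 \to S(-1) \xrightarrow{x_v} S \to 0$ resolves $R$ over $S$, produces the identity $\beta^S_{i,j}(M) = \beta^R_{i,j}(M) + \beta^R_{i-1,j-1}(M)$ for any $R$-module $M$ annihilated by $x_v$, so $\reg_S(M) = \reg_R(M)$. Applying this to $M = R/I(G-v)$ and combining with the first part gives $\reg(I(G-v)) \le \reg(I(G))$, completing the proof.
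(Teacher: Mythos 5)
First, a point of comparison: the paper offers no proof of this lemma at all --- it is stated in Section 2 as a recalled well-known fact, with the reader sent to [1] and [13]. So your proposal must stand on its own, and as written it does not. The short exact sequence
\[
0 \longrightarrow \bigl(S/(I:x)\bigr)(-1) \xrightarrow{\;\cdot x\;} S/I \longrightarrow S/(I,x) \longrightarrow 0
\]
and the resulting bound $\reg(S/(I,x)) \le \max\{\reg(S/(I:x)),\, \reg(S/I)\}$ are fine, as is the Koszul change-of-rings identification of $\reg_S(S/(I(G),x_v))$ with $\reg_R(R/I(G-v))$ at the end. The genuine gap is the middle step: you reduce the whole lemma to the inequality $\reg(I:x)\le\reg(I)$, call it ``the delicate step,'' and then write ``Granting this.'' That inequality is never established. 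Neither of the two methods you gesture at is routine: the Taylor and Lyubeznik complexes are far from minimal, so they only bound Betti numbers from above in a way that does not immediately control $\reg(I:x)$ against $\reg(I)$; and the lcm-lattice of $(I:x)$ is not an upper interval in the lcm-lattice of $I$, so a Gasharov--Peeva--Welker comparison would require constructing an explicit join-preserving, degree-controlling map, which you have not done. Worse, the logical order is backwards from the standard one: the usual proof of $\reg(I:x)\le\reg(I)$ \emph{first} proves $\reg(I,x)\le\reg(I)$ and then reads the same short exact sequence in the other direction, via $\reg(A)\le\max\{\reg(B),\reg(C)+1\}$. So you have deferred the entire content of the lemma to an auxiliary claim that is at least as deep as the lemma itself.

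The repair is to bypass $(I:x)$ entirely. Let $\bar S=S/(x)$ and let $\bar I$ be the ideal of $\bar S$ generated by the minimal generators of $I$ not divisible by $x$, so that $S/(I,x)\cong \bar S/\bar I$. For any multidegree $\alpha$ whose $x$-component is zero, the upper Koszul simplicial complex $K^{\alpha}(I)=\{W : x^{\alpha}/\prod_{w\in W}w \in I\}$ has no face containing $x$ and detects only generators of $I$ not divisible by $x$; hence $K^{\alpha}(I)=K^{\alpha}(\bar I)$ and $\beta_{i,\alpha}(I)=\beta_{i,\alpha}(\bar I)$ for every such $\alpha$. Therefore $\reg_{\bar S}(\bar I)\le\reg_S(I)$, and combining this with your change-of-rings identity $\reg_S(M)=\reg_{\bar S}(M)$ for $\bar S$-modules $M$ gives $\reg(I,x)\le\reg(I)$ directly (the degenerate case $\bar I=0$ being immediate). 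The graph-theoretic consequence then follows exactly as in your last paragraph, which is correct as written.
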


 The following theorem follows from Lemma $2.10$ of [1]:

\begin{lemma}\label{exact}
Let $I \subseteq S$ be a monomial ideal, and let $m$ be a monomial of degree $d$.  Then
\[
\reg(I) \leq \max\{ \reg (I : m) + d, \reg (I,m)\}. 
\]
Moreover, if $m$ is a variable $x$ appearing in I, then $\reg(I)$ is {\it equal} to one of
these terms.
\end{lemma}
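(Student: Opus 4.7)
The plan is to exploit the short exact sequence of graded $S$-modules
\[ 0 \to (S/(I:m))(-d) \xrightarrow{\cdot m} S/I \to S/(I,m) \to 0, \]
where the first map is multiplication by $m$. This map is well-defined and injective by the very definition of the colon ideal, and its cokernel is $S/(I+(m)) = S/(I,m)$. I would then apply $\Tor_*^S(-,K)$ to obtain the long exact sequence
\[ \cdots \to \Tor_i^S(S/(I:m),K)_{j-d} \to \Tor_i^S(S/I, K)_j \to \Tor_i^S(S/(I,m), K)_j \to \cdots \]
where the internal shift $(-d)$ on the first module produces the subscript $j-d$ on its Tor.

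For the main inequality, I would invoke the standard fact that for any short exact sequence $0 \to A \to B \to C \to 0$ of finitely generated graded $S$-modules one has $\reg(B) \leq \max\{\reg(A), \reg(C)\}$. Applied to our sequence, this gives $\reg(S/I) \leq \max\{\reg(S/(I:m)) + d, \reg(S/(I,m))\}$. Converting via $\reg(J) = \reg(S/J) + 1$, which holds for any nonzero proper graded ideal, transforms this into $\reg(I) \leq \max\{\reg(I:m) + d, \reg(I,m)\}$, the desired bound.

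For the moreover part, I take $m = x$ to be a variable appearing in $I$ (so $d = 1$) and pick a bidegree $(i,j)$ realizing $\reg(S/I) = j - i$, i.e., with $\Tor_i^S(S/I,K)_j \neq 0$. Exactness at the middle term of
\[ \Tor_i^S((S/(I:x))(-1), K)_j \to \Tor_i^S(S/I, K)_j \to \Tor_i^S(S/(I,x), K)_j \]
splits into two cases. In case (a) the chosen class has nonzero image on the right, so $\Tor_i^S(S/(I,x),K)_j \neq 0$, giving $\reg(S/(I,x)) \geq \reg(S/I)$ and hence $\reg(I,x) \geq \reg(I)$; combined with the reverse inequality $\reg(I,x) \leq \reg(I)$ from the previous lemma (Lemma 2.12), this yields $\reg(I) = \reg(I,x)$. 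In case (b) the class is in the image from the left, so $\Tor_i^S(S/(I:x),K)_{j-1} \neq 0$, which gives $\reg(S/(I:x)) \geq (j-1)-i = \reg(S/I) - 1$, equivalently $\reg(I:x) + 1 \geq \reg(I)$. Combined with the main inequality $\reg(I) \leq \max\{\reg(I:x)+1, \reg(I,x)\}$ and $\reg(I,x) \leq \reg(I)$, this forces the maximum to be attained and to equal $\reg(I)$. So in either case $\reg(I)$ coincides with one of the two terms $\reg(I:x)+1$ and $\reg(I,x)$.

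The main obstacle I anticipate is purely book-keeping: tracking the internal degree shift $(-d)$ accurately through the long exact sequence, and verifying that cases (a) and (b) exhaust all possibilities via exactness at the middle $\Tor$. All other ingredients — the SES regularity comparison and Lemma 2.12 — are standard, so the argument reduces to careful manipulation of the Tor long exact sequence obtained from the multiplication-by-$m$ map.
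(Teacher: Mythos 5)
The paper does not actually prove this lemma --- it cites Lemma 2.10 of [1] --- so there is no in-paper argument to match; your short exact sequence
\[
0 \to \bigl(S/(I:m)\bigr)(-d) \xrightarrow{\;\cdot m\;} S/I \to S/(I,m) \to 0
\]
is exactly the standard route, it is the one used in [1], and it reappears in the paper's own Lemma 5.1. Your proof of the inequality $\reg(I)\leq\max\{\reg(I:m)+d,\reg(I,m)\}$ is correct, as is case (a) of the ``moreover'' part. (Minor point: the reverse inequality $\reg(I,x)\leq\reg(I)$ is Lemma 2.10 of this paper, not Lemma 2.12; 2.12 is Fr\"oberg's theorem.)

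Case (b), however, has a genuine gap. From $\Tor_i(S/(I:x),K)_{j-1}\neq 0$ you correctly get $\reg(I:x)+1\geq\reg(I)$, but you then claim that this, together with $\reg(I)\leq\max\{\reg(I:x)+1,\reg(I,x)\}$ and $\reg(I,x)\leq\reg(I)$, ``forces the maximum to be attained and to equal $\reg(I)$.'' It does not: the hypothetical configuration $\reg(I:x)+1>\reg(I)>\reg(I,x)$ satisfies all three of those facts while $\reg(I)$ equals neither term. What is missing is an upper bound on $\reg(I:x)$, and you get it by chasing the long exact sequence in the \emph{other} direction: choose $(i,j)$ with $\Tor_i(S/(I:x),K)_j\neq 0$ and $j-i=\reg(S/(I:x))$, and look at
\[
\Tor_{i+1}(S/(I,x),K)_{j+1}\to\Tor_i\bigl((S/(I:x))(-1),K\bigr)_{j+1}=\Tor_i(S/(I:x),K)_{j}\to\Tor_i(S/I,K)_{j+1}.
\]
Either the right-hand term is nonzero, giving $\reg(S/I)\geq\reg(S/(I:x))+1$, i.e.\ $\reg(I)\geq\reg(I:x)+1$, which combined with your case (b) lower bound yields $\reg(I)=\reg(I:x)+1$; or it vanishes, the connecting map surjects onto a nonzero module, and $\reg(S/(I,x))\geq\reg(S/(I:x))$; together with $\reg(I,x)\leq\reg(I)$ and your bound $\reg(I:x)+1\geq \reg(I)$ this pins down $\reg(I)$ as $\reg(I:x)+1$ or $\reg(I,x)$. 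With that extra chase inserted, your argument is complete.
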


 Finally the following theorem due to Fr\"oberg (See Theorem $1$ of [3] and Theorem $1.1$ of [13]) is used repeatedly throughout this paper:

\begin{theorem}
The minimal free resolution of $I(G)$ is linear if and only if the complement graph $G^c$ is chordal, that is no induced cycle in $G^c$ has length greater than three.
\end{theorem}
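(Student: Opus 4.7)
The plan is to translate linearity into simplicial topology via Hochster's formula. The edge ideal $I(G)$ is the Stanley--Reisner ideal of the independence complex $\Delta = \operatorname{Ind}(G)$, which coincides with the clique complex $X(G^c)$ of the complement. Hochster's formula gives
\[
\beta_{i,j}(I(G)) \;=\; \sum_{W \subseteq V(G),\ |W|=j} \dim_K \tilde{H}_{j-i-2}(\Delta_W;K),
\]
where $\Delta_W$ denotes the induced subcomplex on $W$, equivalently the clique complex of $(G^c)[W]$. Since $I(G)$ is generated in degree $2$, the minimal free resolution is linear if and only if $\tilde{H}_k(\Delta_W;K) = 0$ for every $W \subseteq V(G)$ and every $k \geq 1$.

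For the forward implication I would assume $G^c$ contains an induced $k$-cycle $C$ with $k \geq 4$ and take $W = V(C)$. Because $C$ is chordless in $G^c$, the only cliques of $(G^c)[W]$ are the vertices and edges of $C$, so $\Delta_W$ is $C$ itself as a $1$-dimensional complex, which is homotopy equivalent to $S^1$. Hence $\tilde{H}_1(\Delta_W;K) \neq 0$, witnessing a nonzero Betti number $\beta_{k-3,\,k}(I(G))$ off the linear strand and ruling out linearity of the resolution.

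For the reverse implication I would prove the stronger topological claim: for every chordal graph $H$, each connected component of the clique complex $X(H)$ is contractible. Since induced subgraphs of a chordal graph remain chordal, this delivers the required homological vanishing on every $\Delta_W$ simultaneously. I would induct on $|V(H)|$ using Dirac's theorem, which produces a simplicial vertex $v \in V(H)$, that is, a vertex whose open neighborhood $N(v)$ forms a clique. Then the closed star $\operatorname{st}(v)$ in $X(H)$ is the full simplex on $\{v\} \cup N(v)$, and it meets $X(H-v)$ along $\operatorname{link}(v)$, which is the full simplex on $N(v)$; both are contractible. Since $H-v$ is chordal on fewer vertices, applying the inductive hypothesis component-by-component together with a Mayer--Vietoris argument to the decomposition $X(H) = X(H-v) \cup \operatorname{st}(v)$ yields the claim.

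The main obstacle is this contractibility step; everything else is a direct dictionary between multigraded Betti numbers and simplicial homology. An alternative route is Eagon--Reiner, replacing the Mayer--Vietoris computation with an explicit shelling of the Alexander dual $(\operatorname{Ind} G)^{\vee}$ when $G^c$ is chordal, but the self-contained topological approach through Hochster's formula seems the most transparent.
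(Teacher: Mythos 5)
The paper does not prove this statement at all: it is quoted as Fr\"oberg's theorem, with pointers to [3] and to Theorem 1.1 of [13], and is then used as a black box throughout. So there is nothing in the text to compare your argument against; what you have written is a correct, essentially standard, self-contained proof of the cited result. The dictionary is right ($I(G)$ is the Stanley--Reisner ideal of the independence complex of $G$, which is the clique complex of $G^c$), your Hochster indexing $\beta_{i,j}(I(G))=\sum_{|W|=j}\dim_K\tilde{H}_{j-i-2}(\Delta_W;K)$ carries the correct shift for the ideal rather than the quotient ring, and since every singleton is a face of $\Delta_W$ the $\tilde{H}_{-1}$ terms never contribute, so linearity is indeed equivalent to $\tilde{H}_k(\Delta_W;K)=0$ for all $W$ and all $k\geq 1$. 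The forward direction, producing $\beta_{k-3,k}\neq 0$ from an induced $C_k$ in $G^c$ with $k\geq 4$, is exactly right. For the converse, the Dirac simplicial-vertex induction works; one small simplification is that you do not really need Mayer--Vietoris: since the closed star of a simplicial vertex $v$ is the full simplex on $\{v\}\cup N(v)$ and meets the clique complex of $H-v$ in the full simplex on $N(v)$, the whole complex deformation retracts onto the clique complex of $H-v$ whenever $N(v)\neq\emptyset$ (and $v$ is an isolated, hence contractible, component otherwise), which yields contractibility of components directly, whereas Mayer--Vietoris alone only gives the homology vanishing. That vanishing is all the theorem requires, so either version closes the argument.
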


\bigskip

\section{Gap-free graphs}

In this section we observe some basic results concerning gap-free graphs and their regularity. We prove that a cricket free and gap free graph has regularity at most $3$, generalizing Nevo's result (Theorem $3.3$ of [1]) that a gap free and claw free graph has regularity at most $3$. We generalize Nevo's result in another direction by proving an n-claw free and gap free graph has regularity at most $n$.
\begin{definition}
For any graph $G$, we write $\reg(G)$ as shorthand for $\reg(I(G))$.  
\end{definition}

Recall that the \emph{star} of a vertex $x$ of $G$, for which we write $\st x$, is given by
\[
\st x = \{y \in V(G) : xy \text{ is an edge of }G\} \cup \{x\}.
\]
The following lemma is Lemma $3.1$ of [1], which we shall use a lot in this work.

\begin{lemma}\label{removevertex}
Let $x$ be a vertex of $G$ with neighbors $y_1,y_2,..,y_m$.  Then
\[
(I(G) : x) = (I(G - \st x),y_1,..,y_m) \text{ and } (I(G), x) = (I(G - x),x).
\]
Thus, $\reg(G) \leq \max \{ \reg(G - \st x ) + 1, \reg(G - x)\}$. Moreover, $\reg(G)$ is equal to one of
these terms.

\end{lemma}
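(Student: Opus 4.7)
The plan is to prove the two ideal identities directly from the definitions of edge ideals and colon/sum operations, then combine them with Lemma \ref{exact} (taking $m = x$) to get the regularity bound.

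First I would handle the colon identity $(I(G):x) = (I(G-\st x), y_1, \ldots, y_m)$. Each minimal generator of $I(G)$ is an edge $uv$. If $x \in \{u,v\}$, say $x = u$, then $v$ must be some $y_i$ and $(uv:x) = y_i$. Otherwise $(uv:x) = uv$; if this edge has a vertex in $\st x \setminus \{x\}$, then $uv$ is divisible by some $y_i$ and is redundant; if neither endpoint lies in $\st x$, then $uv$ is an edge of the induced subgraph $G - \st x$. This gives the containment $\supseteq$ directly and the containment $\subseteq$ by case analysis. For the second identity $(I(G), x) = (I(G-x), x)$, note that any edge of $G$ either contains $x$ (and is thus absorbed into the generator $x$) or is an edge of $G - x$, so this is immediate.

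Next I would apply Lemma \ref{exact} with $m = x$ (a variable appearing in $I(G)$ since $x$ has at least one neighbor), giving
\[
\reg(I(G)) \leq \max\{\reg(I(G):x) + 1, \reg(I(G), x)\},
\]
with equality to one of the terms by the moreover clause. It remains to identify these two regularities with the regularities of the subgraph edge ideals. For $(I(G):x) = (I(G-\st x), y_1, \ldots, y_m)$, the variables $y_1,\ldots,y_m$ do not appear in $I(G-\st x)$ (since $G-\st x$ has had them removed), so they form a regular sequence modulo $I(G-\st x)$; quotienting by them does not change regularity, giving $\reg(I(G):x) = \reg(G - \st x)$. The same argument with the single regular element $x$ gives $\reg(I(G), x) = \reg(G-x)$. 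Substituting yields the stated bound $\reg(G) \leq \max\{\reg(G-\st x)+1, \reg(G-x)\}$, and the equality-to-one-of-them transfers directly from the moreover clause of Lemma \ref{exact}.

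Most of this is mechanical bookkeeping; the one step that requires a sentence of justification rather than direct computation is the invariance of regularity under adjoining a regular-sequence of variables to the ideal, which I would either quote as a standard fact or justify by noting that the short exact sequence $0 \to (I:y)/I \xrightarrow{y} S/I \to S/(I,y) \to 0$ collapses (when $y$ is regular on $S/I$) to an isomorphism $S/(I,y) \cong (S/y)/I$, and regularity is preserved under such a quotient by a regular linear form. No genuine obstacle arises beyond keeping track of which variables have been eliminated in $G - \st x$ versus $G - x$.
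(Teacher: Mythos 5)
Your proof is correct. The paper itself gives no proof of this lemma, stating it as Lemma 3.1 of [1] (Dao--Huneke--Schweig); your argument --- computing the colon and sum of $I(G)$ with $x$ generator-by-generator, feeding the result into Lemma \ref{exact}, and using that adjoining variables not appearing in a monomial ideal does not change its regularity --- is the standard derivation and matches what that reference does.
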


 The next proposition is Proposition $3.2$ of [1]. 

\begin{proposition}\label{distance2}
Let $G$ be gap-free, and let $x$ be a vertex of $G$ of highest degree.  Then $d(x, y) \leq 2$ for all vertices $y$ of $G$.  
\end{proposition}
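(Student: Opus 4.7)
The plan is to argue by contradiction: assume some vertex $y$ satisfies $d(x,y) \geq 3$ (including the case where $y$ lies in a different component than $x$, in which case $d(x,y) = \infty$). Since $G$ has no isolated vertices, $y$ has at least one neighbor; pick any neighbor $z$ of $y$. From $d(x,y) \geq 3$ one reads off three facts that will be used repeatedly: $y \notin N(x) \cup \{x\}$, $z \notin N(x)$ (otherwise $x\text{--}z\text{--}y$ would give $d(x,y) \leq 2$), and $z \neq x$.

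Now fix an arbitrary neighbor $w$ of $x$. The first step is to show that the edges $xw$ and $yz$ form a gap in $G$. For vertex-disjointness: $x \neq y$ and $x \neq z$ by the observations above, $w \neq y$ because $w \in N(x)$ while $y \notin N(x)$, and $w \neq z$ because otherwise $w$ would be a common neighbor of $x$ and $y$, contradicting $d(x,y) \geq 3$. For the absence of cross edges: $xy$ is not an edge (distance $\geq 3$), $xz$ is not an edge ($z \notin N(x)$), and $wy$ is not an edge (else $w$ would be a common neighbor). Hence if $G$ is gap-free, the remaining potential cross edge $wz$ must actually be present.

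Since $w$ was arbitrary, this proves that $z$ is adjacent to every vertex of $N(x)$. On the other hand $z$ is also adjacent to $y$, and $y$ lies outside $N(x) \cup \{x\}$. Therefore
\[
\deg(z) \;\geq\; |N(x)| + 1 \;=\; \deg(x) + 1,
\]
contradicting the assumption that $x$ has maximum degree in $G$.

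The proof is essentially a single pigeonhole application of the gap-free hypothesis, so the only real care needed is bookkeeping: making sure all four vertex-disequalities and all three ruled-out cross edges genuinely follow from $d(x,y) \geq 3$, and noting that the argument works uniformly whether $y$ is in the same component as $x$ or not. No auxiliary lemma beyond the no-isolated-vertices convention and the definition of gap-freeness is required.
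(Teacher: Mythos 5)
Your proof is correct: the pigeonhole argument (every neighbor of $x$ must be adjacent to a fixed neighbor $z$ of $y$ to avoid a gap, forcing $\deg(z) \geq \deg(x)+1$) is exactly the standard argument, and all the disequalities and excluded cross edges are verified carefully. The paper itself gives no proof here --- it only cites Proposition 3.2 of [1] --- and your argument coincides with the one given there.
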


 We prove the next two theorems using Proposition $3.3$. Our proof is motivated by the proof of Theorem $3.3$ of [1].

\begin{theorem}
Suppose $G$ is both cricket-free and gap-free.  Then $\reg(G) \leq 3$.  
\end{theorem}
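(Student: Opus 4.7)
The plan is to adapt the Nevo-style induction used in the proof of Theorem 3.3 of [1] to the cricket-free setting. I induct on $|V(G)|$. Fix a vertex $x$ of maximum degree. Being gap-free and being cricket-free both pass to induced subgraphs, so $G - x$ and $G - \st x$ inherit both hypotheses. By Lemma \ref{removevertex},
\[
\reg(G) \leq \max\{\reg(G - \st x) + 1,\ \reg(G - x)\},
\]
and by induction $\reg(G - x) \leq 3$. It therefore suffices to show $\reg(G - \st x) \leq 2$, which by Fr\"oberg's theorem amounts to showing that $(G - \st x)^c$ is chordal.

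Assume toward contradiction that $(G - \st x)^c$ contains an induced cycle $y_1 y_2 \cdots y_k y_1$ with $k \geq 4$. Because $G - \st x$ is an induced subgraph of $G$, in $G$ the pairs $y_i y_{i \pm 1 \pmod k}$ are non-edges while all other $y_i y_j$ are edges. If $k = 4$, then the disjoint edges $y_1 y_3$ and $y_2 y_4$ together with the four non-edges $y_1 y_2, y_2 y_3, y_3 y_4, y_4 y_1$ form a gap in $G$, contradicting gap-freeness. For $k \geq 5$, Proposition \ref{distance2} supplies some $z \in \st x \setminus \{x\}$ with $zy_1 \in E(G)$. For each edge $y_i y_j$ of $G$ among the cycle vertices, the pair of edges $\{xz, y_i y_j\}$ must fail to be a gap; since $xy_i$ and $xy_j$ are non-edges (as $y_i, y_j \notin \st x$), this forces $z$ to be adjacent to $y_i$ or $y_j$. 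Hence the set of $y_i$ non-adjacent to $z$ is independent in the induced subgraph on $\{y_1, \ldots, y_k\}$, which is $\overline{C_k}$. Since $\overline{C_k}$ has independence number $2$ with independent pairs being exactly the cyclically consecutive pairs $\{y_i, y_{i+1}\}$, the vertex $z$ is adjacent to at least $k - 2 \geq 3$ cyclically consecutive $y_i$'s; pick three such, $y_a, y_{a+1}, y_{a+2}$.

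Finally, I verify that the induced subgraph of $G$ on the five distinct vertices $\{x, z, y_a, y_{a+1}, y_{a+2}\}$ has exactly the five edges $xz$, $zy_a$, $zy_{a+1}$, $zy_{a+2}$, $y_a y_{a+2}$: the pairs $xy_j$ are non-edges because $y_j \notin \st x$, the pairs $y_a y_{a+1}$ and $y_{a+1} y_{a+2}$ are non-edges by cyclic consecutiveness, while $y_a y_{a+2}$ is an edge since $|a-(a+2)| \not\equiv \pm 1 \pmod k$. This is precisely an induced cricket centered at $z$ (with triangle $\{z, y_a, y_{a+2}\}$ and pendants $x$ and $y_{a+1}$), contradicting cricket-freeness. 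The main obstacle is the structural step in the case $k \geq 5$: one must translate gap-freeness into the precise assertion that the non-neighbors of $z$ among the $y_i$ form a cyclically consecutive pair in $C_k$. Once that is established, the single extra vertex $x$ together with three consecutive $y_i$'s assembles into a cricket with center $z$, and cricket-freeness yields the contradiction.
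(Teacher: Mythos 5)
Your proof is correct and follows essentially the same route as the paper: the same induction on the number of vertices via Lemma 3.2 and Proposition 3.3, reducing to the exclusion of induced cycles of length at least five in $(G-\st x)^c$ and producing an induced cricket on $x$, a common neighbour $z$, and three cyclically consecutive cycle vertices. The only difference is cosmetic: you locate the three consecutive neighbours of $z$ globally, by noting that its non-neighbours among the $y_i$ form an independent set of $\overline{C_k}$ and hence a consecutive pair, whereas the paper reaches the identical configuration by a short two-case analysis around $y_1$.
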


\begin{proof}
Let $x$ be a vertex of maximum degree. As $G$ is gap free and cricket free, so is $G-x$. By induction, $G-x$ has regularity less than or equal to $3$. Because of Lemma $3.2$ and Theorem $2.12$, it is enough to show that $(G-\text{st }x)^c$ has no induced cycle of length greater than or equal to $4$. As $G$ is gap free, so is $(G-\text{st }x)$; hence, $(G-\text{st }x)^c$ has no induced $4-$cycle. So it is enough to show it does not have an induced cycle of length greater than or equal to $5$.\\

  Let $\{y_1,y_2,y_3,y_4,...,y_n\}$ be an induced cycle ($n\geq 5$) in $(G-\text{st }x)^c$; because of Proposition 3.3, there is a $w$ such that $xw$ and $wy_1$ are edges in $G$. As $y_2 y_n$ is an edge in $G$, and neither $y_1 y_2$ nor $y_1 y_n$ are edges in $G$, either $wy_2$, $wy_n$ or both are edges in $G$. If both are edges then $\{x,w,y_1,y_2,y_n\}$ forms an induced cricket.\\
  
  Suppose only one of them is an edge. Without loss of generality, we may assume $wy_2$ is an edge. As $y_3y_n $ is an edge in $G$, and G gap free, $wy_3$ is an edge in $G$; otherwise $\{x,w,y_3,y_n \}$ forms a gap in $G$. This makes  $\{x,w,y_1,y_2,y_3 \}$  an induced cricket.  
\end{proof}

\begin{theorem}
The edge ideal of a graph which is gap free and $n$-claw free, has regularity less than or equal to $n$.
\end{theorem}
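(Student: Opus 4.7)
The plan is to imitate the inductive structure of Theorem 3.4, using a double induction: an outer induction on $n$ and, for each fixed $n$, an inner induction on $|V(G)|$. The base case $n = 3$ is Theorem 3.4, since every claw-free graph is cricket-free by Observation 2.7. For the inductive step, assume the result for $n-1$, let $G$ be gap-free and $n$-claw-free, pick a vertex $x$ of maximum degree, and apply Lemma 3.2 to obtain
\[
\reg(G) \le \max\{\reg(G - \st x) + 1,\ \reg(G - x)\}.
\]
Because $G - x$ is again gap-free and $n$-claw-free, the inner induction on vertex count gives $\reg(G - x) \le n$, so the theorem reduces to proving $\reg(G - \st x) \le n - 1$.

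Since gap-freeness is inherited by induced subgraphs, the outer inductive hypothesis applied to $G - \st x$ will yield $\reg(G - \st x) \le n-1$ as soon as we establish the key structural claim that $G - \st x$ is $(n-1)$-claw-free. I would attack this by contradiction. Suppose $G - \st x$ contains an induced $K_{1, n-1}$ with root $w$ and leaves $y_1, \ldots, y_{n-1}$; by Proposition 3.3 there is $z \in N_G(x) \cap N_G(w)$, and I would split on the adjacencies $zy_i$. If none of the $zy_i$ is an edge then $\{z, y_1, \ldots, y_{n-1}\}$ is an independent set of size $n$ in $N_G(w)$, producing an induced $K_{1, n}$ centered at $w$; if all of the $zy_i$ are edges then $\{x, y_1, \ldots, y_{n-1}\}$ is independent (using $xy_i \notin E(G)$ because $y_i \notin \st x$, and $y_iy_j \notin E(G)$ because the $y_i$ are claw leaves) and contained in $N_G(z)$, producing an induced $K_{1, n}$ centered at $z$. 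Either conclusion contradicts the $n$-claw-freeness of $G$.

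The main obstacle is the intermediate case in which $z$ is adjacent to some but not all of the $y_i$. For such $z$, I would pick $j$ with $zy_j \notin E(G)$, invoke Proposition 3.3 at $y_j$ to produce $z' \in N_G(x) \cap N_G(y_j)$ with $z' \ne z$, and then exploit gap-freeness applied to disjoint edge pairs such as $\{xz',\ wy_i\}$ and $\{zy_i,\ z'y_j\}$ to force further adjacencies among $\{z, z', w, y_1, \ldots, y_{n-1}\}$. These forced edges should eventually produce an independent set of size $n$ inside the neighborhood of one of $w$, $z$, $z'$, or $x$, yielding the desired induced $K_{1, n}$. Handling every subcase of this bookkeeping is the delicate part; it directly generalizes the cricket-forcing step in the proof of Theorem 3.4, and I expect it to demand the most care when the argument is written out in full.
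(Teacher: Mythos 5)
Your reduction is exactly the paper's: induct, take $x$ of maximum degree, apply Lemma 3.2, and reduce everything to showing that $G-\st x$ is $(n-1)$-claw-free. But your proof of that key claim has a genuine gap, and the route you sketch for closing it is not the one that works. You take $w$ to be the root of a putative $(n-1)$-claw in $G-\st x$ and choose a \emph{single} common neighbor $z$ of $x$ and $w$ via Proposition 3.3; your two clean cases ($z$ adjacent to none of the leaves, $z$ adjacent to all of them) are fine, but the intermediate case is left as "delicate bookkeeping," and since your $z$ is by construction adjacent to the root $w$, examining its adjacencies to the leaves extracts essentially no usable information -- the natural gap-free checks (e.g. on $xz$ versus $wy_j$) are already satisfied by the edge $zw$ and yield no contradiction. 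The further subdivision via a second vertex $z'$ is speculative and there is no reason to expect it to terminate in an independent $n$-set.

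The paper's argument avoids this case entirely by using the maximality of $\deg(x)$ a second time, not just through Proposition 3.3. First, gap-freeness gives a dichotomy for \emph{every} neighbor $w'$ of $x$: if $w'$ is non-adjacent to the root $a_1$ and also non-adjacent to some leaf $a_i$, then $xw'$ and $a_1a_i$ are a gap (recall $x$ is adjacent to none of the $a_j$ since they lie outside $\st x$); hence each $w'\in N(x)$ is adjacent to the root or to \emph{all} the leaves. Second, not every neighbor of $x$ can be adjacent to the root, for otherwise the root would have degree at least $\deg(x)+(n-1)>\deg(x)$, contradicting maximality. The resulting $w'\in N(x)$ adjacent to all leaves but not the root gives the $n$-claw $\{x,w',a_2,\dots,a_n\}$ rooted at $w'$. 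If you replace your case analysis on the single vertex $z$ with this dichotomy-plus-degree-count, the proof closes; as written, the intermediate case is an unfilled hole.
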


\begin{proof}
For $n=3$, this was proved by E. Nevo and this is Theorem $3.3$ of [1]. So we may assume $n \geq 4$. Let $x$ be a vertex with maximum degree. Because of Lemma $3.2$, it is enough to show $G-\text{st } {x}$ has regularity less than or equal to $n-1$; as $G-{x}$ has regularity less than or equal to $n$ by induction on number of vertices. Hence, it is enough to show $G-\text{st } {x}$ is $(n-1)$-claw free.\\ 

   If $a_1,a_2,a_3,...,a_n$ is a $(n-1)$-claw with root $a_1$ in $G-\text{st } {x}$ then any $w$ in the neighborhood of $x$ is either connected to $a_1$ or all of $a_2,a_3,..,a_n$; otherwise if $w$ is not connected to $a_1$ and $a_i$ then $xw$ and $a_1 a_i$ will form a gap. If $a_1$ is connected to all neighbors of $x$, it has a degree strictly more than $x$, which is contradictory to the assumption that $x$ is a vertex with maximum degree. Hence, there, is a neighbor $w$ which is not connected to $a_1$ but is connected to all of $a_2,a_3,..,a_n$. As $x$ is not connected to any of the $a_i$s, $\{x,w,a_2,a_3,..,a_n\}$ forms an $n$-claw with root $w$, which is  contradictory to the hypothesis.
\end{proof}

\section{Ordering the minimal monomial generators of powers of edge ideals}
\medskip

\begin{discussion} Let the set of minimal monomial generators of any ideal $J \subset S$ be denoted by $\mingen(J)$. Let $I$ be an arbitrary edge ideal. Set $\mingen(I)=\{L_1, L_2,....,L_k\}$. We give $\mingen(I)$ the follwing order: $L_1>L_2>...>L_k$. We will put an order on $\mingen(I^n)$ for all integers $n\geq 2$ as follows: For $n >1$, we say $M>N$ for  $M,N \in \mingen(I^n)$ if there exists an expression $L_1 ^{a_1} L_2 ^{a_2}...L_k ^{a_k} =M$ such that for all expressions $L_1 ^{b_1}...L_k ^{b_k}=N$, we have $(a_1,...,a_k)>_{\text{lex}} (b_1,...,b_k)$. If $(a_1,...,a_k) \geq_{\text{lex}} (c_1,...,c_k)$ for all $(c_1,....,c_k)$ such that $L_1^{c_1}....L_k^{c_k}=M$ then $L_1 ^{a_1} L_2 ^{a_2}...L_k ^{a_k}$ is called a maximal expression of $M$. Let $L^{(n)}$ be the totally ordered set of minimal monomial generators of $I^n$, ordered in the way discussed above.\\\\
\end{discussion}

\begin{definition}
 If $m_1$ is a minimal monomial generator of $I^k$ and $m_2$ is a minimal monomial generator of $I^n$ where $n > k$, we say $m_1$ divides $m_2$ as an edge and use the notation $m_1 |^{\text{edge}} m_2$, if there exists $m_3$, a minimal monomial generator of $I^{n-k}$ with $m_2=m_1m_3$.\\
\end{definition}

\begin{eg}
If $I=(ab,bc,ad,bd)$ then $ab |^{\text{edge}} ab^2d$ as $bd=\frac{ab^2d}{ab}$ is a minimal monomial generator of $I$ but $ab\nmid ^{\text{edge}} abcd$ as $cd=\frac{abcd}{ab}$ is not a minimal monomial generator of $I$.
\end{eg}

\begin{discussion} We have the following for the list $L^{(n)}$ created above:\\\\
1. $L^{(1)}=L:=\{L_1>....>L_k\}$\\\\
2. For any minimal monomial generator $m$ of $I^n$, $n \geq 2$, the maximal expression of $m$, is an expression of $m$ as a product of $n$ elements of $L$, $m=L_{i_1} L_{i_2}...L_{i_n}$, where:\\ a. $i_1$ is the minimum integer such that $L_{i_1} |^{\text{edge}} m$ \\ b. For all $l \geq 1$, $i_{l+1}$ is the minimal integer such that $L_{i_{l+1}} |^{\text{edge}} \frac{m}{L_{i_1}...L_{i_l}}$. For any edge $cd$ we say $cd$ is a part of the maximal expression of $m$ if $cd=L_{i_k}$ for some $k$. \\
This expression is unique by the construction.\\\\
3. For two minimal monomial generators $m_1, m_2$ with maximal expressions $m_1=L_{i_1}...L_{i_n}$ and $m_2=L_{j_1}...L_{j_n}$, we have $m_1 >_{\text{lex}} m_2$ if for the minimum integer $l$ such that $i_l \neq j_l$, $i_l < j_l$. \\\\
4. If $L_i$ and $L_j$ are two generators of $I$ with $i<j$, then we say $``L_j$ comes after $L_i$'' or $``L_i$ comes before $L_j$''. \\
\end{discussion}

\begin{eg}
Let $I=(ab,bc,ad,bd)$. Let $L^{(1)}=\{ab>bc>ad>bd\}$. Then $L^{(2)}=\{a^2 b^2>ab^2 c>a^2bd>ab^2d>b^2c^2>abcd>b^2cd>
a^2d^2>abd^2>b^2d^2\}$.\\
\end{eg}

\begin{definition}
 If $L_i=ab$ is an edge, that is a minimal monomial generator of $I$,  and $m$ is a minimal monomial generator of $I^n$, $n \geq 2$, then we say $m$ belongs to $ab$, or $m$ belongs to $L_i$, if $i$ is the least integer such that $L_i |^{\text{edge}} m$.\\\\
\end{definition}

\begin{eg}
Let $I=(ab,bc,ad,bd)$ with $L=L^{(1)}=\{ab>bc>ad>bd\}$. Then $abcd$ belongs to $L_2=bc$ as $ab \nmid ^{\text{edge}} abcd$ and $bc | ^{\text{edge}} abcd$ and $ab^2d$ belongs to $L_1=ab$ as $ab |^{\text{edge}} ab^2d$.\\

\end{eg}

 We record several easy observations that we need in the sequel.

\begin{observation} For two minimal monomial generators $m_1, m_2$, if $m_1$ belongs to an edge $L_i$ and $m_2$ belongs to another edge $L_j$ with $i < j$, then $m_1 >_{\text{lex}} m_2$.\\\\
\end{observation}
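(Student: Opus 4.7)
The plan is to translate each ``belongs to'' statement directly into a constraint on the exponent vectors appearing in expressions $L_1^{a_1}\cdots L_k^{a_k}=m_t$, and then read off the lex comparison coordinatewise. The key auxiliary fact is: \emph{if $m$ is a minimal monomial generator of $I^n$ and $L_1^{b_1}\cdots L_k^{b_k}=m$ is any expression with $b_\ell\geq 1$ for some $\ell$, then $L_\ell \mid^{\text{edge}} m$.} This is what lets the hypothesis on $m_2$ control \emph{every} expression of $m_2$, not merely its maximal one.

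First I would establish this auxiliary claim. The argument is short: $m/L_\ell\in I^{n-1}$, so $m/L_\ell$ is divisible by some minimal monomial generator $m'$ of $I^{n-1}$; then $L_\ell m'\in I^n$ divides $m$, and the minimality of $m$ as a generator of $I^n$ forces $L_\ell m'=m$. Hence $m/L_\ell=m'$ is already a minimal monomial generator of $I^{n-1}$, i.e.\ $L_\ell\mid^{\text{edge}} m$.

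Applying the auxiliary claim to $m_2$: since $j$ is by assumption the least index with $L_j\mid^{\text{edge}} m_2$, for every $\ell<j$ and every expression $L_1^{b_1}\cdots L_k^{b_k}=m_2$ we must have $b_\ell=0$; in particular, because $i<j$, every expression of $m_2$ has $b_1=\cdots=b_i=0$. On the other hand, the hypothesis that $m_1$ belongs to $L_i$ says that $i$ is the least index with $L_i\mid^{\text{edge}} m_1$, which by the recipe for the maximal expression in Discussion~4.3 forces the maximal expression $L_1^{a_1}\cdots L_k^{a_k}=m_1$ to have $a_1=\cdots=a_{i-1}=0$ and $a_i\geq 1$.

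To conclude, I would compare the vectors $(a_1,\ldots,a_k)$ and $(b_1,\ldots,b_k)$ in lex order. On the coordinates $1,\ldots,i-1$ both vectors are zero, and at coordinate $i$ we have $a_i\geq 1>0=b_i$. Thus $(a_1,\ldots,a_k)>_{\text{lex}}(b_1,\ldots,b_k)$ for every expression of $m_2$, which by the definition of the order on $L^{(n)}$ is exactly the statement $m_1>_{\text{lex}} m_2$. There is no real obstacle here: the only nontrivial step is the auxiliary claim, after which the result is a direct coordinatewise comparison of exponent vectors.
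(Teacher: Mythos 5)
Your proof is correct. Note that the paper itself offers no proof of this statement at all --- it is recorded as one of ``several easy observations,'' so there is no argument of the author's to compare against; what you have written supplies the missing verification. The one genuinely non-obvious point is exactly the one you isolate as your auxiliary claim: that if $L_\ell$ appears with positive exponent in \emph{any} expression of a minimal generator $m$ of $I^n$ as a product of $n$ edges, then $L_\ell \mid^{\text{edge}} m$, i.e.\ $m/L_\ell$ is itself a \emph{minimal} generator of $I^{n-1}$. Your proof of this via the minimality of $m$ (if $m/L_\ell = u m'$ with $m'$ a minimal generator of $I^{n-1}$ and $u \neq 1$, then $m = u(L_\ell m')$ would be a proper multiple of an element of $I^n$) is valid, and it is what makes the hypothesis on $m_2$ control all of its expressions rather than only the maximal one --- without it, the coordinatewise lex comparison would not go through, since the order in Discussion 4.1 quantifies over every expression of the smaller element. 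The remaining steps (the maximal expression of $m_1$ has $a_1=\cdots=a_{i-1}=0$ and $a_i\geq 1$, while every expression of $m_2$ has $b_1=\cdots=b_i=0$) follow as you say, and the conclusion matches the definition of the order exactly. The only cosmetic slip is the reference to Discussion 4.3, which should be Discussion 4.4.
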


\begin{observation}
 For two minimal monomial generators $m_1, m_2$ of $I^n$ which both belong to an edge $L_i$, we see that $m_1 >_{\text{lex}} m_2$ if and only if $\frac{m_1}{L_i} >_{\text{lex}} \frac{m_2}{L_i}$.\\\\
\end{observation}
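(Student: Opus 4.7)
The plan is to reduce the lexicographic comparison of $m_1$ and $m_2$ to a comparison of their maximal expressions, and then to peel off the common leading factor $L_i$.

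First I would invoke the recursive characterization of the maximal expression given in Discussion~4.4: for any minimal monomial generator $m$ of $I^{n}$, the maximal expression $m = L_{i_1}L_{i_2}\cdots L_{i_n}$ is built by taking $i_1$ to be the smallest index with $L_{i_1}\mid^{\text{edge}} m$ and then, inductively, $i_2,\ldots,i_n$ to be the indices defining the maximal expression of $m/L_{i_1}$ in $I^{n-1}$. In particular, if one strips off the first factor $L_{i_1}$, what remains, $L_{i_2}\cdots L_{i_n}$, is precisely the maximal expression of $m/L_{i_1}$.

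Now apply this to $m_1$ and $m_2$. By Definition~4.6, since both $m_1$ and $m_2$ belong to $L_i$, the index $i$ is the smallest index such that $L_i \mid^{\text{edge}} m_k$ for $k=1,2$. Hence the maximal expressions take the form
\[
m_1 = L_i\,L_{i_2}\cdots L_{i_n}, \qquad m_2 = L_i\,L_{j_2}\cdots L_{j_n},
\]
where $L_{i_2}\cdots L_{i_n}$ and $L_{j_2}\cdots L_{j_n}$ are the maximal expressions of $m_1/L_i$ and $m_2/L_i$ in $I^{n-1}$, respectively.

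Finally, I would apply the definition of the order from Discussion~4.4(3). Since the first entries in the index tuples of the two maximal expressions agree (both equal $i$), we have $(i,i_2,\ldots,i_n) >_{\text{lex}} (i,j_2,\ldots,j_n)$ if and only if $(i_2,\ldots,i_n) >_{\text{lex}} (j_2,\ldots,j_n)$. The left side is, by definition, $m_1 >_{\text{lex}} m_2$, and the right side is $m_1/L_i >_{\text{lex}} m_2/L_i$ (in the order on $\mingen(I^{n-1})$). This gives the stated equivalence. There is no real obstacle here; the only point requiring care is to justify that the leading factor in the maximal expression really is $L_i$, which is immediate from the definition of ``belongs to.''
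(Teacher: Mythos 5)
Your proof is correct, and it matches the paper's (implicit) reasoning: the paper states this Observation without proof, treating it as immediate from Discussion~4.4 and Definition~4.6, and your argument is exactly the expected unpacking — both maximal expressions begin with $L_i$ because ``belongs to $L_i$'' means $i$ is the least index with $L_i \mid^{\text{edge}} m_k$, the tails are the maximal expressions of $m_1/L_i$ and $m_2/L_i$ by the recursive construction, and the lex comparison of index tuples with equal first entries reduces to the comparison of the tails. Nothing further is needed.
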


\begin{observation}
Suppose $m$ is a minimal monomial generator of $I^n$, $n\geq 2$, and $gh$ is an edge which is a part of the maximal expression of $m$. Write $m=gh m'$. For any minimal monomial generator $m''$ of $I^{n-1}$ such that $m'' >_{\text{lex}} m'$, then $gh m'' >_{\text{lex}} m$.\\\\
\end{observation}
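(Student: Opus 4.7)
The plan is to compare the maximal expressions of $ghm''$ and $m = ghm'$ directly. I first observe that the maximal expression of any minimal monomial generator of $I^n$, when written $L_{i_1}\cdots L_{i_n}$, is automatically weakly increasing in the $i_t$: any smaller edge-divisor available at step $t+1$ of the greedy algorithm would already have been available and hence chosen at step $t$. Write the max expression of $m$ as $L_{i_1}\cdots L_{i_n}$ with $gh = L_{i_k}$, and the max expression of $m''$ as $L_{\beta_1}\cdots L_{\beta_{n-1}}$.

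The first substantive step is to identify the maximal expression of $m' = m/gh$ as the sequence $L_{\gamma_1}\cdots L_{\gamma_{n-1}}$ with $(\gamma_1,\ldots,\gamma_{n-1}) = (i_1,\ldots,i_{k-1},i_{k+1},\ldots,i_n)$, namely the max expression of $m$ with its $k$-th entry removed. This is a short greedy bookkeeping: for $t \geq k$, the remainder on $m'$ at step $t$ equals the remainder on $m$ at step $t+1$, so greedy picks the same smallest edge-divisor $i_{t+1}$; for $t < k$, the remainders differ by a factor of $L_{i_k}$, and any edge-divisor of the smaller remainder still edge-divides the larger, which forces the greedy pick on $m'$ to be $\geq i_t$, while $L_{i_t}$ itself remains a valid edge-divisor of the remainder on $m'$.

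With this in hand, the max expressions of $m$ and $ghm''$ both arise by inserting the index $\mu$ of $gh$ into sorted sequences (namely $\gamma$ and $\beta$ respectively): the first produces $(i_1,\ldots,i_n)$ itself, and the second produces a sorted tuple $\delta$ coming from the expression $\{gh, L_{\beta_1},\ldots,L_{\beta_{n-1}}\}$ of $ghm''$. Since the true maximal expression of $ghm''$ is lex-greater-or-equal to $\delta$ in the paper's convention, it suffices to show $\delta >_{\text{lex}} (i_1,\ldots,i_n)$. A case analysis on where $\mu$ lies---either $\mu \leq \gamma_{\ell-1}$, or $\gamma_{\ell-1} < \mu \leq \beta_\ell$, or $\beta_\ell < \mu < \gamma_\ell$, or $\mu \geq \gamma_\ell$, where $\ell$ is the least index with $\beta_\ell < \gamma_\ell$ (which exists by the hypothesis $m'' >_{\text{lex}} m'$)---shows in every case that the strict inequality $\beta_\ell < \gamma_\ell$ propagates to a first position where $\delta$ has a smaller entry than $(i_1,\ldots,i_n)$. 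The most delicate case is $\beta_\ell < \mu < \gamma_\ell$, where $\mu$ is inserted at position $\ell+1$ into $\beta$ but at position $\ell$ into $\gamma$; there one compares $\delta_\ell = \beta_\ell$ directly with $i_\ell = \mu$, using $\beta_\ell < \mu$. This merge-style case analysis is the principal technical obstacle but is handled by direct inspection.
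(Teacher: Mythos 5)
Your argument is correct and is essentially the paper's proof in different notation: both identify the maximal expression of $m$ as $gh$ times the maximal expression of $m'$ and then check that multiplying a lex comparison by $gh$ preserves it. The paper records expressions as exponent vectors $(a_1,\dots,a_k)$ on the $L_i$ rather than as sorted index tuples, so your four-case insertion analysis collapses to the one-line observation that adding $1$ to the $j$-th coordinate of two vectors preserves $>_{\text{lex}}$; the greedy bookkeeping you supply for the claim that deleting $gh$ from the maximal expression of $m$ yields the maximal expression of $m'$ is a detail the paper asserts without proof.
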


\begin{proof}
 Let $L=\{L_1>L_2>....>L_k\}$. Let $gh=L_j$ for some $j$. Let $m''=L_1^{a_1} L_2^{a_2}....L_k^{a_k}$ be the maximal expression of $m''$ and $m'=L_1^{b_1}L_2^{b_2}....L_k^{b_k}$ be the maximal expression of $m'$. As $gh$ is part of the maximal expression of $m$, the maximal expression of $m$ is $L_1^{b_1}....L_j^{b_j +1}....L_k^{b_k}$. As by assumption $(a_1,...,a_j,...a_k) >_{\text{lex}} (b_1,....,b_j,...,b_k)$, we have $(a_1,...,a_j +1,....a_k) >_{\text{lex}} (b_1,....,b_j +1,...b_k)$. Now\\ $L_1^{a_1}....L_j^{a_j+1}....L_k^{a_k}$ is an expression for $ghm''$. Hence $ghm'' >_{\text{lex}} ghm'=m$. 
\end{proof}

 The next lemma is the most important technical result of this paper as it allows us to build the framework of Section $5$. Using the framework of Section $5$ we obtain our bounds in Section $6$.\\\\

\begin{lemma}
For all $k\geq 1$ and for all $j\leq k$, if $(L_j ^{(n)}:L_{k+1}^{(n)} )$  is not contained in $(I^{n+1}:L_{k+1} ^ {(n)})$ and $L_j ^{(n)}$ belongs to an edge that comes before the edge $L_{k+1}^{(n)}$ belongs to, then  there exists $i \leq k$, such that $(L_i ^{(n)}: L_{k+1} ^{(n)})$ is generated by a variable, $(L_j ^{(n)} : L_{k+1} ^{(n)}) \subseteq (L_i ^{(n)} : L_{k+1} ^ {(n)} )$ and $L_i^{(n)}$ belongs to an edge that comes before or equal to the edge $L_j^{(n)}$ belongs to.
\\\\
\end{lemma}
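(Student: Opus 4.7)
I will let $M := L_j^{(n)}$ and $N := L_{k+1}^{(n)}$, and set $u := M/\gcd(M,N)$, so that $(M:N) = (u)$ and the first hypothesis reads $uN \notin I^{n+1}$. A first key observation is that $u$ cannot contain any edge of $G$ as a submonomial: if an edge $pq$ of $G$ divided $u$, then $uN = (pq)\cdot (u/(pq))\cdot N \in I\cdot I^n = I^{n+1}$, contradicting the hypothesis. Write the maximal expressions $M = f_1\cdots f_n$ and $N = g_1 \cdots g_n$; the second hypothesis says $f_1$ comes before $g_1$. Set $f_1 = xy$. Since $u$ has no edge, at most one of $x, y$ lies in $\mathrm{supp}(u)$.

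Main case: exactly one, say $x$, lies in $\mathrm{supp}(u)$. Then $y \notin \mathrm{supp}(u)$ forces $\mathrm{mult}_y(N) \geq \mathrm{mult}_y(M) \geq 1$, so $y \mid N$. Choose an edge $g_l = yz$ of $N$'s maximal expression containing $y$ (note $z \neq x$, as $g_l \neq f_1$), and define
\[
L_i \;:=\; f_1 \cdot (N/g_l) \;=\; Nx/z.
\]
A direct multiplicity computation yields $\gcd(L_i,N) = N/z$ and hence $(L_i : N) = (x)$, a variable dividing $u$; therefore $(L_j:N) = (u) \subseteq (x) = (L_i:N)$. The factorization $L_i = f_1 \cdot (N/g_l)$ exhibits $L_i$ as a product of $n$ edges, i.e., a minimal monomial generator of $I^n$, whose maximal expression begins with an edge coming at or before $f_1$. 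By Observation 4.7, $L_i > N$ in the lex order, so $i \leq k$.

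Remaining case: neither $x$ nor $y$ lies in $\mathrm{supp}(u)$. Then $x, y \mid N$, so $xy \mid N$, yet $f_1 \nmid^{\text{edge}} N$ (else $N$ would belong to an edge preceding $g_1$). Hence $x$ and $y$ occupy distinct edges of $N$'s expression, $g_{l_1} = xz_1$ and $g_{l_2} = yz_2$ with $l_1 \neq l_2$; moreover $z_1 z_2 \notin E(G)$, for otherwise $N$ would re-decompose as $f_1 \cdot (z_1 z_2) \cdot \prod_{l \neq l_1, l_2} g_l$, contradicting $f_1 \nmid^{\text{edge}} N$. I then aim for
\[
L_i \;:=\; f_1 \cdot (w z_1) \cdot \prod_{l \neq l_1, l_2} g_l \;=\; Nw/z_2
\]
for some $w \in \mathrm{supp}(u)$ with $wz_1 \in E(G)$ and $w \neq z_2$ (or symmetrically using $z_2$). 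The same computation yields $(L_i:N) = (w)$ with $w \in \mathrm{supp}(u)$, and the presence of $f_1$ in $L_i$'s decomposition ensures $L_i$ belongs to an edge coming at or before $f_1$.

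The principal technical obstacle is establishing the existence of such a $w$ in the remaining case. The plan is a contradiction argument drawn directly from $uN \notin I^{n+1}$: if no $w \in \mathrm{supp}(u)$ were adjacent to $z_1$ or $z_2$ in the required way, then, combining the no-edge property of $u$ with the monomial identity $g_{l_1} g_{l_2} = f_1 \cdot (z_1 z_2)$, one can exhibit $n+1$ edges of $G$ whose product divides $uN$ --- essentially by augmenting through $\mathrm{supp}(u)$ the matching furnished by $N$'s decomposition --- contradicting the hypothesis. This swap-and-augment step is the main combinatorial work of the proof.
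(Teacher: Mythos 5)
Your ``main case'' (exactly one endpoint of $f_1$ lying in $\mathrm{supp}(u)$) is correct and complete, and your overall strategy --- a direct construction of $L_i$ of the form $Nw/v$, rather than the paper's induction on $n$ --- is a genuinely different route. But the ``remaining case'' contains a real gap, and not a routine one: the existence of $w\in\mathrm{supp}(u)$ adjacent to $z_1$ or $z_2$ is precisely the hard content of the lemma, and the dichotomy you propose to establish it (``no such $w$ implies $uN\in I^{n+1}$'') is false as stated. Take $G$ with edge set $\{xy,\,wa,\,cd,\,xt,\,yc,\,ad\}$, ordered in that way, and let $M=(xy)(wa)(cd)$, $N=(xt)(yc)(ad)$ in $I^3$. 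Then $M$ belongs to $xy$ while $N$ belongs to $xt$ (none of $xy$, $wa$, $cd$ edge-divides $N$), so $M>_{\text{lex}}N$ and the hypotheses hold; $u=w$ contains no edge and $x,y\notin\mathrm{supp}(u)$; here $g_{l_1}=xt$, $g_{l_2}=yc$, so $z_1=t$, $z_2=c$, and $w$ is adjacent to neither. Yet $uN=wxtycad$ has degree $7<8$, so $uN\notin I^4$: no swap-and-augment can produce four edges. The lemma survives only because $(M:N)=(w)$ is already generated by a variable, so $L_i=M$ works --- a case your argument never separates out (the paper does, as the subcase ``$(L_j^{(n)}:L_{k+1}^{(n)})=(f)$'').

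So at a minimum you must split off the case $\deg u=1$ and take $L_i=L_j$ there, and then actually prove the augmentation claim for $\deg u\ge 2$. That residual claim is not obviously true and is where essentially all of the work lies: the paper proves the lemma by induction on $n$, peeling an edge $fg$ off $M$ and an edge $gh$ off $N$ that share the vertex $g$, and applying the inductive hypothesis to the colon ideal $(abN_1:acN_2)$ of generators of $I^{n-1}$; a one-shot combinatorial argument in the spirit you describe would have to reproduce that recursion. As written, the proposal proves the easy half of the lemma and defers the essential half to an unproven (and, without the $\deg u=1$ carve-out, false) claim.
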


\begin{proof}
  We prove the Lemma by induction on $n$. We recall that for two monomials $m_1$ and $m_2$, $(m_1:m_2)=(\frac{m_1}{\text{gcd}(m_1,m_2)})$. This is going to be used in several places.\\  

 If $n=1$,  $(L_j : L_{k+1})$ is either $(L_j)$, in which case $(L_j : L_{k+1} ) \subseteq (I^2 : L_{k+1} )$ or it is generated by a variable in which case we take $L_i = L_j$. Hence the lemma is true for $n=1$.\\
 
 Suppose the result is true for $n-1$. Let $L_j ^{(n)}$ belong to $ab$, so that $L_j ^{(n)}= ab M_1$ where $M_1 \in L^{(n-1)}$. By assumption $L_{k+1}^{(n)}$ belongs to an edge which comes after $ab$ in $L$. If neither $a$ nor $b$ divide $L_{k+1} ^ {(n)}$ then $(L_j ^{(n)} : L_{k+1} ^ {(n)} ) \subseteq (ab) \subseteq (I^{n+1} : L_{k+1} ^ {(n)})$ which is contrary to our assumption.\\
 
   Without loss of generality we assume $a | L_{k+1} ^{(n)}$. As $L_{k+1}^{(n)}$ is a product of edges,there exists an edge $ac$ with $ac |^{\text{edge}} L_{k+1}$, where $ac$ is a part of the maximal expression of $L^{(n)}_{k+1}$. So, $L_{k+1}^{(n)}= ac M_2$ for some $M_2 \in L^{(n-1)}$ which is the remaining part of the maximal expression. Now $ab \nmid ^{\text{edge}} L_{k+1} ^{(n)}$ as $L_{k+1}^{(n)}$ belongs to an edge that comes after $ab$. Hence $b\neq c$.\\
 
 If $(L_j ^{(n)}: L_{k+1} ^{(n)} ) \subseteq (b)$, then we take $L_i ^{(n)} = ab M_2$. Clearly $L_ i ^{(n)}$ belongs to $ab$ or some edge that comes before $ab$. Also, $(L_i ^{(n)} : L_{k+1} ^{(n)}) = (ab M_2 : ac M_2) = (b)$. Hence $L_i^{(n)}$ has all the required properties.\\
 
 If $(L_j ^{(n)}: L_{k+1} ^{(n)} )$ is not contained in $(b)$, then there is a variable $d$ such that $bd$ is an edge and $bd |^{\text{edge}} M_2$ and $bd$ is a part of maximal expression of $M_2$. Let $(L_j ^{(n)}: L_{k+1} ^{(n)} ) \subseteq (f)$ where $f$ is a variable. If $(L_j ^{(n)}: L_{k+1} ^{(n)} )=(f)$ then we take $L_i ^{(n)} = L_j ^{(n)}$. This has all the required properties.\\
 
 So let us assume $(L_j ^{(n)}: L_{k+1} ^{(n)} ) = (M_1 b : M_2 c) \subsetneq (f)$. Let $(L_j ^{(n)}: L_{k+1} ^{(n)} )= (fm)$ where $m$ is a monomial which is not $1$. So there is an edge $fg$ such that $fg | ^{\text{edge}} M_1$ and $fg$ is part of the maximal expression of $M_1$. If $g \nmid M_2 c$ then  $(L_j ^{(n)}: L_{k+1} ^{(n)} ) \subseteq (fg) \subseteq (I^{n+1} : L_{k+1} ^{(n)})$ which contradicts our assumption. So $g| M_2 c$.\\
 
 If $g=c$ then either $f=d$, that is $fcab=bdac$ or $(fcab:bdac)=(f)$. In the first case  $L_{k+1}= ac M_2= ac bd \frac{M_2}{bd} = fcab \frac{M_2}{bd}$. Now $bd |^{\text{edge}} M_2$, so $ab | ^{\text{edge}} L_{k+1} ^{(n)}$ which is a contradiction. In the second case we take $L_i ^{(n)} = (fc)(ab) \frac{L_{k+1} ^{(n)}}{bdac}$. Clearly $L_i ^{(n)} $ belongs to $ab$ or a some edge that comes before $ab$ and $(L_i ^{(n)}: L_{k+1} ^{(n)} )=(f)$, which contains $(L_j ^{(n)}: L_{k+1} ^{(n)} )$. Hence $L_i^{(n)}$ has the required properties.\\
 
 Now let us assume $g \neq c$. So there is an edge $gh$ such that $gh |^{\text{edge}} M_2$, such that $gh$ is a part of the maximal expression of $M_2$. Let $\frac{M_1}{fg} = N_1$ and $\frac{M_2}{gh}=N_2$. As $(L_j ^{(n)}:L_{k+1}^{(n)})=(fm)$, $fg ab N_1|fm gh ac N_2$. So $ab N_1| hm ac N_2$. So $(hm) \subset (ab N_1: ac N_2)$. We observe that $(ab N_1 : ac N_2)$ is either $(m)$ or $(hm)$. For if $m'|m$ then $abN_1| hm' ac N_2$ implies $fg ab N_1 | fm' gh ac N_2$ implies $fm|fm'$ implies $m=m'$.  \\\\
 
 If $(N_1 ab : N_2 ac )=(m)$  then $(L_j ^{(n)}: L_{k+1} ^{(n)} ) \subseteq (m)=(ab N_1 : ac N_2)$. Now both $ab N_1$ and $acN_2$ are in $L^{(n-1)}$. As $ab N_1$ belongs to $ab$ and $ac N_2$ belongs to some edge which comes after $ab$, $ab N_1 >_{\text{lex}} ac N_2$. By induction either $(ab N_1 : ac N_2) \subseteq (I^n  : ac N_2)$ or there exists $M_0$ in $L^{(n-1)}$, $M_0 >_{\text{lex}} ac N_2$, $(ab N_1 : ac N_2) \subseteq (M_0 : ac N_2)$, $(M_0 : ac N_2)$ is generated by a variable and  $M_0$ belongs to an edge that comes before or equal to $ab$. In the first case $(L_j ^{(n)}: L_{k+1} ^{(n)} ) \subseteq (ab N_1 : ac N_2) \subseteq (I^n : ac N_2) \subset (I^{n+1} : gh ac N_2)= (I^{n+1} : L_{k+1} ^{(n)})$, which is a contradiction. In the second case write $L_i^{(n)} = gh M_0$. We know that $L_i^{(n)} >_{\text{lex}} L_{k+1}^{(n)}$ as $M_0$ belongs to an edge that comes before or equal to $ab$. Also $(L_i ^{(n)}: L_{k+1} ^{(n)} ) = (M_0 : ac N_2)$, $(L_j ^{(n)}: L_{k+1} ^{(n)} ) \subseteq (m)=(ab N_1 : ac N_2) \subseteq (M_0 : ac N_2)$  and $(M_0 : ac N_2)$ is generated by a variable.\\
 
 Now let us assume $ (ab N_1 : ac N_2) = (hm)$. As $ab N_1 >_{\text{lex}} ac N_2$ , by induction either $(ab N_1 : ac N_2) \subseteq (I^n  : ac N_2)$ or there exists $M_0'$ in $L^{(n-1)}$, $M_0' >_{\text{lex}} ac N_2$, with $(ab N_1 : ac N_2) \subseteq (M_0' : ac N_2)$, $(M_0' : ac N_2)$ is generated by a variable, and $M_0'$ belongs to an edge that comes before or equal to $ab$. In the first case $hm ac N_2 \in I^n $, so $fm gh ac N_2= fg mh ac N_2 \in I^{n+1}$. So $(L_j ^{(n)}:L_{k+1}^{(n)}) \subseteq (I^{n+1}:L_{k+1}^{(n)})$, which is a contradiction. In the second case if $(M_0' : ac N_2) \neq (h)$ then let $L_i^{(n)} = gh M_0'$. As $M_0'$ belongs to an edge that comes before or equal to $ab$,  $L_i ^{(n)} >_{\text{lex}} L_{k+1}^{(n)}$.  Also $(L_i ^{(n)}: L_{k+1} ^{(n)} ) = (M_0' : ac N_2)$ which contains $(L_j ^{(n)}: L_{k+1} ^{(n)} )$ and is generated by a variable. If $(M_0' : ac N_2)=(h)$ we take $L_i^{(n)} =fg M_0'$. By same reasoning  $L_i ^{(n)} >_{\text{lex}} L_{k+1}^{(n)}$. As $L_i ^{(n)}$ can not be same as $L_{k+1}^{(n)}$ we observe $(L_i ^{(n)}: L_{k+1} ^{(n)})=(f)$. So this $L_i^{(n)}$ has all the required properties. This completes the proof.
\end{proof}

\begin{theorem}
For all $k\geq 1$ and for all $j\leq k$, if $(L_j ^{(n)}: L_{k+1} ^{(n)} )$  is not contained in $(I^{n+1}:L_{k+1} ^ {(n)})$ then  there exists $i \leq k$, such that $(L_i ^{(n)}: L_{k+1} ^{(n)})$ is generated by a variable and $(L_j ^{(n)} : L_{k+1} ^{(n)}) \subseteq (L_i ^{(n)} : L_{k+1} ^ {(n)} )$. \\\\
\end{theorem}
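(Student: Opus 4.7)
\emph{Plan.} The strategy is to induct on $n$ and reduce Theorem $4.12$ to Lemma $4.11$ in the one case the latter does not cover. Since $j\le k$ forces $L_j^{(n)}>_{\text{lex}}L_{k+1}^{(n)}$, the contrapositive of Observation $4.7$ implies that $L_j^{(n)}$ belongs to an edge that comes before or equal to the edge to which $L_{k+1}^{(n)}$ belongs. If it comes strictly before, then Lemma $4.11$ applies verbatim and produces the required $L_i^{(n)}$, so the only case requiring additional work is when $L_j^{(n)}$ and $L_{k+1}^{(n)}$ both belong to the same edge $ab$. The base case $n=1$ is immediate: every generator of $I$ belongs only to itself, so this ``same edge'' case would force $L_j^{(1)}=L_{k+1}^{(1)}$, which contradicts $j\le k$; thus Lemma $4.11$ is all one needs at $n=1$.

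\emph{Reduction to $L^{(n-1)}$.} In the remaining case, extract the common factor $ab$ from the maximal expressions and write $L_j^{(n)}=ab\,M_1$ and $L_{k+1}^{(n)}=ab\,M_2$ with $M_1,M_2\in L^{(n-1)}$ the tails of the maximal expressions. By Observation $4.8$, $M_1>_{\text{lex}}M_2$, and direct cancellation gives $(L_j^{(n)}:L_{k+1}^{(n)})=(M_1:M_2)$. If $(M_1:M_2)\subseteq(I^n:M_2)$, then every $x\in(M_1:M_2)$ satisfies $xM_2\in I^n$, hence $x\cdot ab\,M_2\in I^{n+1}$, so $x\in(I^{n+1}:L_{k+1}^{(n)})$, contradicting the hypothesis of the theorem. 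Thus $(M_1:M_2)\not\subseteq(I^n:M_2)$, and the inductive hypothesis (Theorem $4.12$ for $n-1$) applied to $M_1$ and $M_2$ in $L^{(n-1)}$ produces some $M_0\in L^{(n-1)}$ with $M_0>_{\text{lex}}M_2$, $(M_0:M_2)$ generated by a single variable, and $(M_1:M_2)\subseteq(M_0:M_2)$.

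\emph{Constructing $L_i^{(n)}$ and finishing.} Set $L_i^{(n)}:=ab\,M_0$, which is a product of $n$ edges and hence a minimal monomial generator of $I^n$. Since $ab$ is part of the maximal expression of $L_{k+1}^{(n)}=ab\,M_2$ and $M_0>_{\text{lex}}M_2$, Observation $4.9$ yields $L_i^{(n)}>_{\text{lex}}L_{k+1}^{(n)}$, i.e.\ $i\le k$. Finally, $(L_i^{(n)}:L_{k+1}^{(n)})=(ab\,M_0:ab\,M_2)=(M_0:M_2)$ is generated by a variable and contains $(L_j^{(n)}:L_{k+1}^{(n)})=(M_1:M_2)$, as required. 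The main subtlety is recognizing that the ``same edge'' configuration is the only gap in Lemma $4.11$ and that the common factor $ab$ can be cancelled cleanly; once that is in place, the reduction to $L^{(n-1)}$ and the invocation of the inductive hypothesis are essentially formal.
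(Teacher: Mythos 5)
Your proof is correct and takes essentially the same route as the paper: both arguments cancel the common part of the two maximal expressions so as to reduce to the case where the leading edges differ, and then invoke Lemma $4.11$ together with the observations that $(mm_1:mm_2)=(m_1:m_2)$ and that the relevant colon ideal of a lower power lifts into $(I^{n+1}:L_{k+1}^{(n)})$. The only difference is organizational: the paper strips the entire common prefix $m$ of the maximal expressions in one step and applies Lemma $4.11$ once to the tails, whereas you peel off one shared edge at a time by inducting on $n$.
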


\begin{proof}
 We have $L_j ^{(n)}=mm_1$ and $L_{k+1}^{(n)}=mm_2$ where $m \in \text{Mingens} (I^k)$ and $m_1,m_2 \in \text{Mingens} (I^{n-k})$ with $m_1$ belongs to an edge that comes strictly before the edge $m_2$ belongs. We observe $(L_j^{(n)}:L_{k+1}^{(n)})=(m_1:m_2)$ and $(I^{n-k+1}:m_2) \subseteq (I^{n+1}:mm_2)$. With these two observations the theorem follows from Lemma $4.11$.
This finishes the proof.
\end{proof}

\section{Bounding the regularity: The Framework}

In this section we create the framework from which we shall prove our bounds. The framework is created by repeated use of Lemma $2.11$. Let $I$ and $J$ be two homogeneous square free monomial ideals in $S$ generated in degrees $n_1$ and $n_2$ respectively. Assume $J \subset I$ and $n_2$ is strictly greater than $n_1$. If the unique set of minimal monomial generators of $I$ is $\{m_1,m_2,...,m_k\}$ then repeated use of Lemma $2.11$ gives us the following lemma:

\begin{lemma}
Let $A= \max \{\reg (J:m_1)+n_1\}$ $$B=\max \{\reg ((J,m_1,..,m_l):m_{l+1})+n_1|1\leq l \leq {k-1}\}$$ $$C=\reg(I)$$ Then $\reg{J} \leq \max \{A,B,C\}$.

\end{lemma}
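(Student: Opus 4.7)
The plan is to apply Lemma~\ref{exact} iteratively, peeling off one generator $m_i$ of $I$ at a time from $J$. The first application, with the monomial $m = m_1$ of degree $n_1$, yields
$$\reg(J) \leq \max\bigl\{\reg(J:m_1) + n_1,\ \reg(J, m_1)\bigr\},$$
whose first term is precisely $A$. The second application, now to the ideal $(J, m_1)$ with the monomial $m_2$, bounds $\reg(J, m_1)$ by
$$\max\bigl\{\reg((J, m_1):m_2) + n_1,\ \reg(J, m_1, m_2)\bigr\},$$
and the first term here is subsumed by $B$. Iterating for $l = 1, 2, \ldots, k-1$ produces $k-1$ residual ``colon'' terms of the form $\reg((J, m_1, \ldots, m_l):m_{l+1}) + n_1$, each of which is at most $B$ by the definition of $B$.

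The closing observation is that after $k$ iterations the only remaining term is $\reg(J, m_1, \ldots, m_k)$. Since $J \subseteq I$ and $I = (m_1, \ldots, m_k)$, we have $(J, m_1, \ldots, m_k) = I$, so this residual equals $C = \reg(I)$. Chaining the $k$ inequalities then delivers exactly
$$\reg(J) \leq \max\{A, B, C\}.$$

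There is no substantial obstacle; the argument is a telescoping bookkeeping exercise, essentially a finite induction on $k$. The only point worth flagging is that the recursion terminates cleanly because $J$ sits inside $I$ and the $m_i$ exhaust the minimal generators of $I$; this is precisely what replaces what would otherwise be an unbounded chain of colon regularities with the single finite quantity $\reg(I)$. The squarefree hypothesis on $I$ and $J$ and the inequality $n_2 > n_1$ play no role in this particular step, though they are presumably invoked when the lemma is applied later in the paper.
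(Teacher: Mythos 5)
Your proposal is correct and follows essentially the same route as the paper: the paper's proof is just the explicit short-exact-sequence form of Lemma~2.11, applied first to $(J,m_1)$ and then iteratively to $(J,m_1,\dots,m_l)$ with $m_{l+1}$, terminating at $(J,m_1,\dots,m_k)=I$. Your observation that the squarefreeness and $n_2>n_1$ hypotheses are not needed here is also accurate.
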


\begin{proof}

We consider the follwing short exact sequence:
\[
0 \longrightarrow \frac{S}{(J : m_1)}(-n_1) \overset{.m_1} \longrightarrow \frac{S}{J} \longrightarrow \frac{S}{(J, m_1)} \longrightarrow 0
\]

This gives us $\reg(J) \leq \max \{\reg(J:m_1)+n_1=A, \reg(J,m_1) \} $. Let $J_l:=((J,m_1,...,m_{l-1}) : m_l)$ for all $l \geq 2$. For all $1 \leq l \leq k-1$ we can consider the exact sequence
\[
0 \longrightarrow \frac{S}{(J_{l+1})}(-n_1) \overset{.m_{l+1}} \longrightarrow \frac{S}{(J,m_1,...,m_l)} \longrightarrow \frac{S}{(J,m_1,...,m_{l+1})} \longrightarrow 0,
\]

This gives us $$\reg(J,m_1,...,m_l) \leq \max \{\reg(J_{l+1})+n_1, \reg(J,m_1,...,m_{l+1}) \} $$ from which $\reg(J)\leq \max\{A,B,C\}$ follows.
\end{proof}

 This lemma together with Theorem $4.12$ gives the next theorem which is the main result we use for finding bounds on regularity of higher powers of edge ideals.\\
 
\begin{theorem}
For any finite simple graph $G$ and any $s\geq 1$, let the set of minimal monomial generators of $I(G)^s$ be $\{m_1,....,m_k\}$, then $$\reg(I(G)^{s+1}) \leq \max \{ \reg (I(G)^{s+1} : m_l)+2s, 1\leq l \leq k, \reg ( I(G)^s)\}.$$ 
\end{theorem}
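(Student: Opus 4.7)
My plan is to combine Lemma 5.1 with Theorem 4.12, and then absorb residual variables using Lemma 2.10. First, I apply Lemma 5.1 with $I=I(G)^s$, $J=I(G)^{s+1}$, $n_1=2s$, and $n_2=2s+2$, using the specific ordering $L_1^{(s)}>L_2^{(s)}>\cdots>L_k^{(s)}$ of $\mingen(I(G)^s)$ built in Discussion 4.1. Since the right-hand side of the desired inequality is symmetric in the minimal generators, there is no loss in fixing this particular ordering. Lemma 5.1 then gives
\[
\reg(I(G)^{s+1})\leq \max\{A,B,C\},
\]
where $A=\reg(I(G)^{s+1}:L_1^{(s)})+2s$, $C=\reg(I(G)^s)$, and
\[
B=\max_{1\le l\le k-1}\reg\!\bigl((I(G)^{s+1},L_1^{(s)},\ldots,L_l^{(s)}):L_{l+1}^{(s)}\bigr)+2s.
\]

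Next, I rewrite each colon ideal appearing in $B$ via the distributive identity
\[
\bigl((I(G)^{s+1},L_1^{(s)},\ldots,L_l^{(s)}):L_{l+1}^{(s)}\bigr)=\bigl(I(G)^{s+1}:L_{l+1}^{(s)}\bigr)+\sum_{j=1}^{l}\bigl(L_j^{(s)}:L_{l+1}^{(s)}\bigr).
\]
Theorem 4.12 furnishes the crucial dichotomy: for each $j\le l$, either $(L_j^{(s)}:L_{l+1}^{(s)})$ is already contained in $(I(G)^{s+1}:L_{l+1}^{(s)})$ (and so contributes nothing new to the sum), or there is an index $i\le l$ such that $(L_i^{(s)}:L_{l+1}^{(s)})$ is principal, generated by a single variable, and contains $(L_j^{(s)}:L_{l+1}^{(s)})$. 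Collecting over $j$, the colon ideal rewrites as
\[
\bigl(I(G)^{s+1}:L_{l+1}^{(s)}\bigr)+(x_{j_1},x_{j_2},\ldots,x_{j_r})
\]
for some finite list of variables $x_{j_1},\ldots,x_{j_r}$ coming from the conclusion of Theorem 4.12.

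Finally, I strip off these extra variables using Lemma 2.10, which says that adjoining a single variable to a monomial ideal cannot increase its regularity. Applying that inequality $r$ times produces
\[
\reg\!\bigl((I(G)^{s+1},L_1^{(s)},\ldots,L_l^{(s)}):L_{l+1}^{(s)}\bigr)\leq \reg\!\bigl(I(G)^{s+1}:L_{l+1}^{(s)}\bigr),
\]
so $B\le \max_{2\le l+1\le k}\reg(I(G)^{s+1}:L_{l+1}^{(s)})+2s$, which together with $A$ collapses into the family of terms appearing on the right-hand side of the claimed bound. Combined with $C=\reg(I(G)^s)$, this yields the theorem. I do not expect a genuine obstacle here: the substantive technical work lives entirely in Theorem 4.12, and the present argument is essentially a formal assembly of short exact sequences from Lemma 2.11 followed by the variable-absorption step via Lemma 2.10.
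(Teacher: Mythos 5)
Your argument is correct and follows the paper's proof essentially verbatim: apply Lemma 5.1 to the ordered list $L^{(s)}$ from Discussion 4.1, use Theorem 4.12 to identify each ideal $((I(G)^{s+1},L_1^{(s)},\ldots,L_l^{(s)}):L_{l+1}^{(s)})$ with $(I(G)^{s+1}:L_{l+1}^{(s)})$ plus finitely many variables, and strip those variables with Lemma 2.10. The only difference is that you spell out the distributive identity and the collection of variables explicitly, which the paper leaves implicit.
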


\begin{proof}
 Minimal monomial generators of $I(G)^s$ forms the ordered list $L^{(s)}$ from section $4$. So by Lemma $5.1$, $$\reg (I(G)^{s+1}) \leq \max \{A,B,C\}$$ Where $$A=\max \{\reg (I(G)^{s+1}:L^{(s)}_1)+2s\}$$ $$B=\max \{\reg (((I(G)^{s+1},L^{(s)}_1,..,L^{(s)}_l):L^{(s)}_{l+1})+2s|1\leq l \leq {k-1}\}$$ $$C= \reg(I(G)^s).$$
 
 But in light of Theorem $4.12$, $((I(G)^{s+1},L^{(s)}_1,..,L^{(s)}_l):L^{(s)}_{l+1})$ is the same as \\$((I(G)^{s+1} : L^{(s)}_{l+1}), \text{some variables})$. So by Lemma $2.10$ $$\reg((I(G)^{s+1},L^{(s)}_1,..,L^{(s)}_l):L^{(s)}_{l+1}) \leq  \reg((I(G)^{s+1} : L^{(s)}_{l+1}),$$
 
  and the theorem follows.
\end{proof}

  As a corollary to the above theorem we get the following important result:
\begin{corollary}
If for all $s\geq 1$ and for all minimal monomial generator $m$ of $I(G)^s$, $\reg(I(G)^{s+1}:m) \leq 2$ and $\reg(I(G)) \leq 4$ then for all $s \geq 1, \reg(I(G)^{s+1})=2s+2$; as a consequence $I(G)^{s+1}$ has a linear minimal free resolution. 
\end{corollary}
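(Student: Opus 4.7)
The plan is to proceed by induction on $s$, with Theorem $5.2$ as the workhorse and Observation $2.9$ (linear resolution iff $\reg = 2s$) taking care of the last sentence.

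For the base case $s=1$, I would apply Theorem $5.2$ with $s=1$: for the set $\{m_1,\dots,m_k\}$ of minimal generators of $I(G)$,
\[
\reg(I(G)^2) \;\leq\; \max\bigl\{\reg(I(G)^2:m_l)+2 \;:\; 1\leq l\leq k\bigr\} \;\cup\; \{\reg(I(G))\}.
\]
By the first hypothesis each $\reg(I(G)^2:m_l)+2 \leq 4$, and by the second $\reg(I(G))\leq 4$, so $\reg(I(G)^2)\leq 4$. The reverse inequality is automatic since $I(G)^2$ is generated in degree $4$, so $\reg(I(G)^2)=4=2\cdot 1+2$.

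For the inductive step, assume $\reg(I(G)^s)=2s$ for some $s\geq 1$. Applying Theorem $5.2$ to $I(G)^{s+1}$ with minimal monomial generators $m_1,\dots,m_k$ of $I(G)^s$ gives
\[
\reg(I(G)^{s+1}) \;\leq\; \max\bigl\{\reg(I(G)^{s+1}:m_l)+2s, \;\reg(I(G)^s)\bigr\}.
\]
The hypothesis bounds each $\reg(I(G)^{s+1}:m_l)+2s$ by $2+2s=2s+2$, and the induction hypothesis gives $\reg(I(G)^s)=2s<2s+2$, so $\reg(I(G)^{s+1})\leq 2s+2$. As $I(G)^{s+1}$ is generated in degree $2s+2$, we get equality $\reg(I(G)^{s+1})=2s+2$.

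There is no real obstacle here; the content of the corollary is entirely packaged in Theorem $5.2$, and the proof amounts to verifying that both terms in the maximum are at most $2s+2$ under the induction hypothesis, with the base case $s=1$ being precisely where the weaker hypothesis $\reg(I(G))\leq 4$ (rather than $\reg(I(G))=2$) is used. The final assertion about linearity of the minimal free resolution is then immediate from Observation $2.9$.
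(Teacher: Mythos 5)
Your proof is correct and follows essentially the same route as the paper: induction on $s$ with Theorem 5.2 supplying the bound $\reg(I(G)^{s+1})\leq\max\{\reg(I(G)^{s+1}:m_l)+2s,\ \reg(I(G)^s)\}$, the hypothesis controlling the first term, and the induction hypothesis the second. Your version is marginally tidier in making explicit the lower bound coming from the generation degree, which the paper leaves implicit via Observation 2.9.
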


\begin{proof}
We observe that under the condition if $\reg(I(G)^s) \leq 2s+2$ then $\reg (I(G)^{s+1}) \leq 2s+2$ too. Now $\reg(I(G)) \leq 4$ implies $\reg (I(G)^2 ) \leq 4$. By induction assume $\reg I(G)^k \leq 2k$. As $2k < 2k+2$, $\reg I(G)^k \leq 2k+2$. Hence $\reg I(G)^{k+1} \leq 2k+2$. This proves the corollary.
\end{proof}
\bigskip

\section{Bounding the regularity: The results}
\medskip

 In this section we give some new bounds on $\reg (I(G)^s)$ for certain classes of gap free graphs $G$. The main idea is to carefully analyze the ideal $(I(G)^{s+1}:e_1....e_s)$ for an arbitrary $s$-fold product of edges, i.e. for $i \neq j$, $e_i=e_j$ is a possibility. Now any $s$-fold product can be written as product of $s$ edges in various ways. In this section we fix a presentation and work with respect to that. We first prove that these ideals are generated in degree two for any graph $G$.\\
 
\begin{theorem}
For any graph $G$ and for any $s$-fold product $e_1....e_s$ of edges in $G$ (with the possibility of $e_i$ being same as $e_j$ as an edge for $i \neq j$), the ideal $(I(G)^{s+1} :e_1....e_s)$ is generated by monomials of degree two.
\end{theorem}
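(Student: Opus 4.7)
My plan is to show that no minimal monomial generator of $J := (I(G)^{s+1} : e_1 \cdots e_s)$ has degree at least three; combined with the observation that a single variable $v$ gives $\deg(v \cdot e_1 \cdots e_s) = 2s+1 < 2(s+1)$ (so no variable lies in $J$), this forces every minimal generator to have degree exactly two. Suppose for contradiction $m$ is a minimal generator with $d := \deg m \geq 3$, and write
\[
m \cdot e_1 \cdots e_s \;=\; f_1 f_2 \cdots f_{s+1} \cdot h
\]
with each $f_i$ an edge of $G$ and $\deg h = d - 2 \geq 1$. The strategy is to produce, in every case, a degree-$2$ proper divisor of $m$ lying in $J$, contradicting minimality.

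Two immediate reductions. First, if some variable $v$ divides $\gcd(m, h)$, then $(m/v) \cdot e_1 \cdots e_s = f_1 \cdots f_{s+1} \cdot (h/v) \in I(G)^{s+1}$, so $m/v \in J$ is a proper divisor; iterating, I may assume the factorization satisfies $\gcd(m, h) = 1$, which forces $m \mid f_1 \cdots f_{s+1}$ and $h \mid e_1 \cdots e_s$. Second, if some $f_i$ has both endpoints in $V(m)$ (respecting multiplicities), then $f_i \mid m$ and $f_i \in I(G) \subseteq J$ is the desired degree-$2$ divisor; the same reasoning applied to any $e_j$ whose two endpoints lie in $V(m)$ shows each $e_j$ has at most one $V(m)$-endpoint. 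Multiplicity bookkeeping of $V(m)$-variables across $m \cdot e_1 \cdots e_s = f_1 \cdots f_{s+1} \cdot h$ then yields $d + E_m \leq s+1$, where $E_m := \sum_{v \in V(m)} \mathrm{exp}_v(e_1 \cdots e_s)$.

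Consequently the $f_i$'s contribute $2(s+1) - (d + E_m) \geq s+1$ non-$V(m)$-endpoints, all drawn from the $2s - E_m$ non-$V(m)$-slots of $e_1 \cdots e_s$. A pigeonhole argument on the $s$ edges $e_j$ produces some $e_k$ whose (necessarily non-$V(m)$) slots are both occupied by endpoints of the $f_i$'s. Two clean sub-cases resolve: (a) both slots come from a single $f_i$, in which case $f_i = e_k$ and cancellation gives $m \in (I(G)^s : \prod_{j \neq k} e_j)$, so induction on $s$ (with trivial base $s = 0$) furnishes a degree-$2$ divisor of $m$ that remains in $J$ after multiplying back by $e_k \in I(G)$; or (b) the slots come from distinct $f_a, f_b$ whose other endpoints $u, u'$ both lie in $V(m)$, in which case $uu' \mid m$ and the identity
\[
uu' \cdot e_1 \cdots e_s \;=\; f_a f_b \cdot \prod_{j \neq k} e_j
\]
exhibits $uu'$ as an element of $J$. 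Either resolution violates minimality.

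The main obstacle is the remaining case (c): the two slots of $e_k$ are covered by distinct $f_a, f_b$ but at least one of the other endpoints lies outside $V(m)$. Here I would exploit the fact that $\gcd(m, h') = 1$ must hold for \emph{every} valid factorization, not just the chosen one, so any alternate factorization obtained by swapping edges either respects this constraint (and then the structural rigidity of $d + E_m \leq s+1$ forces the existence of an auxiliary edge in $G$---typically of the form $uu'$ or $uz$ for $z$ the partner of the stray endpoint in its $e_l$---which allows a re-factorization landing in case (a) or (b)) or it produces a nontrivial $\gcd(m, h')$, giving the reduction of the first paragraph and a proper divisor in $J$. Organizing this swap argument combinatorially, and verifying that every configuration is eventually funneled into case (a) or (b), is the technical heart of the proof and the step I expect to demand the most delicate bookkeeping.
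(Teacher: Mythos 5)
Your argument is incomplete: case (c) is a genuine gap, not a deferred technicality. You explicitly leave open the configuration where the two slots of $e_k$ are covered by distinct $f_a,f_b$ with at least one free endpoint outside $V(m)$, offering only a plan (``swap argument,'' ``delicate bookkeeping'') with no demonstration that the swaps terminate or that every configuration funnels into (a) or (b); since nothing in your setup rules this case out, the proof does not close. There is also a smaller flaw in case (b): from $u,u'\in V(m)$ you conclude $uu'\mid m$, but when $u=u'$ this requires $u^2\mid m$, which is not guaranteed (the multiset of $V(m)$-endpoints of the $f_i$'s is the union of the variables of $m$ \emph{and} the $V(m)$-variables of $e_1\cdots e_s$, so two $f_i$'s can share the endpoint $u$ even though $u$ appears only once in $m$). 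You do get $u^2\in(I(G)^{s+1}:e_1\cdots e_s)$, but that is not a divisor of $m$, so minimality is not contradicted.

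The paper avoids all of this casework with a one-step induction on $s$ that is worth internalizing, because it is essentially your case (a) made unconditional. Write $me_1\cdots e_s=f_1\cdots f_{s+1}h$ with $\deg h=\deg m-2\geq 1$. Since $\gcd(m,h)=1$ may be assumed (your first reduction), some occurrence of a variable $p$ from some $e_i=pq$ is unused by $f_1\cdots f_{s+1}$; then $f_1\cdots f_{s+1}$ divides $q\,e_1\cdots\widehat{e_i}\cdots e_s\,m$, and whether or not $q$ occurs among the $f_j$'s one checks that an $s$-fold subproduct of the $f_j$'s divides $e_1\cdots\widehat{e_i}\cdots e_s\,m$. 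Hence $m\in(I(G)^s:\prod_{j\neq i}e_j)\subseteq(I(G)^{s+1}:e_1\cdots e_s)$, so $m$ is a minimal generator of the smaller colon ideal, and induction forces $\deg m=2$. No pigeonhole, no matching of slots, and no anything like your cases (b) or (c) is needed. If you want to salvage your approach, the fix is precisely to prove that the reduction to $(I(G)^s:\prod_{j\neq i}e_j)$ is always available, rather than only when some $f_i$ equals some $e_k$.
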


\begin{proof}
We prove this using induction on $s$. For $s=0$ the result is clear as $(I(G):(1))=I(G)$, which is generated by monomials of degree two. Now let us assume the theorem is true till $s-1$.\\

  Let $m$ be a minimal monomial generator of $(I(G)^{s+1}:e_1....e_s)$. Then $e_1....e_s m$ is divisible by an $s+1$-fold product of edges. By degree consideration $m$ can not have degree $1$. If $m$ has degree greater than or equal to $3$ then again by a degree consideration for some $i$, $e_i=pq$ such that $e_1...e_{i-1}qe_{i+1}..e_s m$ is divisible by an $s+1$-fold product of edges. Without loss of generality we may assume $e_1=pq$ and there is an $s+1$-fold product $f_1....f_{s+1}$ such that $f_1....f_{s+1}|qe_2....e_s m$.\\
  
  If $q|f_1.....f_{s+1}$, without loss of generality we may assume $f_1=p'q$. So \\ $p'qf_2....f_{s+1}|qe_2....e_s m$. Hence $f_2....f_{s+1} |e_2....e_s m$. If $q$ does not divide $f_1....f_{s+1}$ then $f_1....f_{s+1} | e_2....e_s m$ and hence $f_2....f_{s+1} | e_2....e_s m$. In both cases $m \in (I(G)^s : e_2....e_s)$.\\
  
   Now $(I(G)^s : e_2....e_s) \subset (I(G)^{s+1} : e_1....e_s)$ and $m$ is a minimal monomial generator of $(I(G)^{s+1}: e_1....e_s)$. So $m$ has to be a minimal monomial generator of $(I(G)^s: e_2....e_s)$. Hence by induction $m$ has degree two, which is a contradiction to the assumption that $m$ has degree greater than or equal to three. Hence $m$ has to have degree two.
\end{proof}

 To analyze the generators of $(I(G)^{s+1} :e_1....e_s)$, we introduce the notion of $\emph{even-connectedness}$ with respect to $s$-fold products.\\
 
\begin{definition}
Two vertices $u$ and $v$ ($u$ may be same as $v$) are said to be even-connected with respect to an $s$-fold product $e_1....e_s$ if there is a path $p_0 p_1....p_{2k+1}$, $k \geq 1$ in $G$ such that:\\
1. $p_0=u, p_{2k+1}=v.$\\
2. For all $0\leq l \leq k-1$, $p_{2l+1} p_{2l+2}=e_i$ for some $i$.\\
3. For all $i$, $$|\{l\geq 0| p_{2l+1} p_{2l+2} =e_i \} | \leq | \{j |e_j=e_i \} | $$
4. For all $0 \leq r \leq 2k$, $p_r p_{r+1}$ is an edge in $G$.\\

 If these properties are satisfied then $p_0,....,p_{2k+1}$ is said to be an even-connection between $u$ and $v$ with respect to $e_1....e_s$.
\end{definition}

\begin{eg}
Let $I(G)= (xy,xu,yv,yw,wz,zv)$ and $e_1=xy$, $e_2=wz$ then $u,x,y,w,z,v$ is an even-connection between $u$ and $v$ with respect to $e_1e_2$.
\end{eg}

 The following observation is an immediate consequence of the definition:\\
 
\begin{observation}
If $u=p_0,....,p_{2k+1}=v$ is an even-connection with respect to some $s$-fold product $e_1....e_s$, then for any $j'\geq j\geq 0$, any neighbor $x$ of $p_{2j+1}$ and any neighbor $y$ of $p_{2j'+2}$ are even connected with respect to $e_1....e_s$.\\
\end{observation}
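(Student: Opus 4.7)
The plan is to construct the required even-connection explicitly by splicing the endpoints $x$ and $y$ onto the sub-walk of the original even-connection that runs from $p_{2j+1}$ to $p_{2j'+2}$. Setting $k' := j' - j + 1 \geq 1$, I would define a walk $q_0, q_1, \ldots, q_{2k'+1}$ by $q_0 = x$, $q_i = p_{2j+i}$ for $1 \leq i \leq 2(j'-j)+2$, and $q_{2k'+1} = y$. This is a walk of length $2k'+1$, which has the correct odd parity required for an even-connection, and by construction begins at $x$ and ends at $y$.

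The bulk of the argument is then a direct verification of the four conditions of Definition 6.2 for this new walk. Condition (1) is immediate. For condition (2), the odd-to-even edges are $q_{2l+1}q_{2l+2} = p_{2(j+l)+1}p_{2(j+l)+2}$ for $0 \leq l \leq k'-1$, each of which coincides with some $e_i$ because it is one of the odd-to-even edges of the original even-connection. Condition (3) holds because the multiset of such edges in the new walk is a sub-multiset of the corresponding multiset in the original even-connection, so no $e_i$ occurs more often than it already does. For condition (4), all intermediate transitions $q_r q_{r+1}$ with $1 \leq r \leq 2k'-1$ are inherited as edges from the original path, while the two new transitions $q_0 q_1 = x p_{2j+1}$ and $q_{2k'} q_{2k'+1} = p_{2j'+2} y$ are edges of $G$ precisely because $x$ is a neighbor of $p_{2j+1}$ and $y$ is a neighbor of $p_{2j'+2}$.

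The only thing to keep track of is the indexing. One must note that $j' \leq k-1$ (which is implicit in the observation since $p_{2j'+2}$ is a vertex of the original walk), so that the final odd-to-even edge $p_{2j'+1}p_{2j'+2}$ really does appear in the original even-connection, and that $k' = j'-j+1 \geq 1$ so that the resulting walk is a legitimate even-connection. With these bookkeeping points in place, the observation reduces to a direct check, and no further ingredients beyond the definition itself are needed.
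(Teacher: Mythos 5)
Your construction is correct and is exactly the direct verification the paper has in mind: the paper states Observation 6.4 without proof as ``an immediate consequence of the definition,'' and splicing $x$ and $y$ onto the sub-walk $p_{2j+1},\ldots,p_{2j'+2}$ and checking conditions (1)--(4) is that argument. Your bookkeeping (the parity of the new walk, $k'=j'-j+1\geq 1$, and $j'\leq k-1$ so the used $e_i$'s form a sub-multiset of the original ones) is accurate, so nothing is missing.
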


The next theorem also easily follows from the definition.\\

\begin{theorem}
If $u=p_0,....,p_{2k+1}=v$ is an even-connection with respect to some $s$-fold product $e_1....e_s$ then $uv \in (I(G)^{s+1}:e_1....e_s)$.
\end{theorem}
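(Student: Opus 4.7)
The plan is to construct, from the even-connection data, an explicit expression for $uv \cdot e_1 \cdots e_s$ as a product of $s+1$ edges of $G$, which directly exhibits it as an element of $I(G)^{s+1}$ and hence gives $uv \in (I(G)^{s+1} : e_1 \cdots e_s)$.

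Given the even-connection $u = p_0, p_1, \ldots, p_{2k+1} = v$, I first use condition (3) to choose \emph{distinct} indices $i_0, \ldots, i_{k-1} \in \{1,\ldots,s\}$ with $e_{i_l} = p_{2l+1}p_{2l+2}$ for each $l = 0, \ldots, k-1$. This is possible precisely because condition (3) bounds the multiplicity with which any edge $e$ appears among $p_1p_2, p_3p_4, \ldots, p_{2k-1}p_{2k}$ by its multiplicity among $e_1, \ldots, e_s$, so a matching of the "middle" edges to a subset of the $e_i$'s exists.

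The central step is the telescoping monomial identity
$$uv \cdot \prod_{l=0}^{k-1} e_{i_l} \;=\; p_0\, p_{2k+1} \cdot \prod_{l=0}^{k-1} p_{2l+1}p_{2l+2} \;=\; \prod_{r=0}^{2k+1} p_r \;=\; \prod_{l=0}^{k} \bigl(p_{2l}\, p_{2l+1}\bigr),$$
valid as an equality of monomials in $S$. By condition (4) each factor $p_{2l}p_{2l+1}$ on the right is an edge of $G$, so the right-hand side is a product of $k+1$ generators of $I(G)$. Multiplying both sides by the remaining factors $e_i$ with $i \notin \{i_0, \ldots, i_{k-1}\}$ yields
$$uv \cdot e_1 \cdots e_s \;=\; \left(\prod_{l=0}^{k} p_{2l}\, p_{2l+1}\right) \cdot \prod_{i \notin \{i_0,\ldots,i_{k-1}\}} e_i,$$
which is a product of $(k+1) + (s-k) = s+1$ edges of $G$, and therefore lies in $I(G)^{s+1}$.

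I do not anticipate a genuine obstacle; the argument is essentially a bookkeeping identity. The one delicate point is the use of condition (3) to secure \emph{distinct} indices $i_0, \ldots, i_{k-1}$, since otherwise the product $\prod_{i \notin \{i_0,\ldots,i_{k-1}\}} e_i$ would omit the wrong multiplicity of some edge and the final expression would not equal $uv \cdot e_1 \cdots e_s$. Once that matching is set up, the displayed identity closes the argument immediately.
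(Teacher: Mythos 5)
Your proof is correct and follows essentially the same route as the paper: the paper likewise uses conditions (2) and (3) to absorb the middle edges $p_1p_2,\ldots,p_{2k-1}p_{2k}$ into $k$ of the $e_i$'s and conditions (1) and (4) to recognize $(up_1)(p_2p_3)\cdots(p_{2k}v)$ as a $(k+1)$-fold product of edges, which is exactly your telescoping identity. Your version merely makes explicit the matching of middle edges to distinct indices that the paper leaves implicit.
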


\begin{proof}
By condition 2 and 3 of the definition, $e_1....e_s=p_1....p_{2k}.e_{j_1}....e_{j_{s-k}}$, for some $\{j_1,j_2,...,j_{s-k} \} \subset \{1,....,s\}$ and by condition 1 and 4 of definition $up_1....p_{2k} v$ is a $k+1$-fold product of edges in $G$. Hence $uve_1....e_s$ is an $s+1$-fold product of edges in $G$ and the result follows.
\end{proof}

 Although we fix a representation for all $s$-fold product and work with respect to that representation, it is worth noting that our definition of even-connectedness is independent of the representation we choose in the following sense:\\
 
\begin{theorem} If $f_1....f_s=e_1....e_s$ are two different representations of same $s$-fold product as product of edges and $u$ and $v$ are even-connected with respect to $e_1....e_s$, then $u$ and $v$ are even-connected with respect to $f_1....f_s$.
\end{theorem}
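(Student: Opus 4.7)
The plan is to argue that even-connectedness of $u$ and $v$ is really a property of the underlying monomial $e_1\cdots e_s$ rather than the particular factorization into edges, by passing through the associated colon ideal.

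First, I would establish a converse to Theorem 6.5 by induction on $s$: if $uv\in(I(G)^{s+1}:h_1\cdots h_s)$, then either $uv\in E(G)$ or $u$ and $v$ are even-connected with respect to $h_1\cdots h_s$. Given a factorization $uv\cdot h_1\cdots h_s=g_1\cdots g_{s+1}$ into edges of $G$, choose $g_1=ux_1$; if $x_1=v$ the first alternative holds, otherwise $x_1$ must appear in some $h_j$, say $h_1=x_1y_1$ after relabeling, so that $vy_1\in(I(G)^s:h_2\cdots h_s)$. The inductive hypothesis then supplies either an edge $vy_1$, yielding the length-three walk $u-x_1-y_1-v$ with $h_1$ as its sole constrained edge, or an even-connection between $v$ and $y_1$ with respect to $h_2\cdots h_s$, to which one prepends $u,x_1$ with $h_1$ as the leading constrained edge to obtain an even-connection from $u$ to $v$.

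Given this converse, suppose $u,v$ are even-connected with respect to $e_1\cdots e_s$ via a walk $p_0,\ldots,p_{2k+1}$. By Theorem 6.5, $uv\in(I(G)^{s+1}:e_1\cdots e_s)$, and since the colon ideal depends only on the underlying monomial, this equals $(I(G)^{s+1}:f_1\cdots f_s)$. If $uv\notin E(G)$, the converse immediately produces the required even-connection with respect to $f_1\cdots f_s$. If $uv\in E(G)$, I would apply the converse to the explicit factorization
\[
uv\cdot f_1\cdots f_s \;=\; (\text{bridges of the }e\text{-walk})\cdot(\text{unused }e_i\text{'s})
\]
and choose $g_1=up_1$, the first bridge edge of the $e$-walk. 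In the generic sub-case $p_1\neq v$, this choice allows the converse argument to run productively and deliver an even-connection with respect to $f_1\cdots f_s$. In the degenerate sub-case $p_1=v$ (when the first bridge of the $e$-walk is $uv$ itself), the first constrained edge $p_1p_2=vp_2$ places $v$ in the monomial, hence in some $f_j=vz$; the length-three walk $u-v-z-v$ with $f_j$ as its constrained edge and $uv,zv$ as its bridge edges is then the required even-connection with respect to $f_1\cdots f_s$.

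The main obstacle is this residual case where $uv$ happens to be an edge of $G$: the converse of Theorem 6.5 alone only guarantees $uv\in E(G)$, not a walk of length at least three with respect to $f_1\cdots f_s$. Overcoming it requires feeding the converse the specific factorization coming from the original $e$-walk (so that $g_1=up_1\neq uv$ whenever possible) and separately handling the narrow degenerate sub-case in which the $e$-walk begins with $uv$; once both sub-cases are dispatched, the colon-ideal equality does the remaining algebraic work.
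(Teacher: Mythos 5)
Your proposal is correct, but it takes a genuinely different route from the paper. The paper proves this theorem \emph{before} it has the converse of Theorem 6.5 in hand, so it works entirely at the level of walks: it takes the given even-connection $p_0,\dots,p_{2k+1}$, keeps the initial segment up to the first constrained edge $p_{2i+1}p_{2i+2}$ that is not among the $f_j$'s, and then reroutes the tail by repeatedly comparing the two factorizations of the monomial $p_{2i+1}(p_{2i+2}p_{2i+3})\cdots(p_{2k}v)e_{t_1}\cdots e_{t_{s-k}}=vf_{j_1}\cdots f_{j_{s-i}}$ to extract successive bridge and constrained edges. You instead route through the algebra: even-connection $\Rightarrow$ membership in $(I(G)^{s+1}:e_1\cdots e_s)$ (Theorem 6.5), the colon ideal depends only on the monomial, and then a converse (essentially the paper's later Theorem 6.7, which you re-prove by induction on $s$, so there is no circularity) recovers an even-connection with respect to $f_1\cdots f_s$. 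Your approach is more modular and conceptually cleaner, and you correctly identify and dispatch the one place where the black-box converse is insufficient, namely when $uv$ is already an edge of $G$: feeding the converse's inductive argument the explicit factorization $uv\,f_1\cdots f_s=(up_1)(p_2p_3)\cdots(p_{2k}v)e_{t_1}\cdots e_{t_{s-k}}$ with $g_1=up_1\neq uv$ forces it to output a genuine walk, and the degenerate sub-case $p_1=v$ is handled by the short walk $u,v,z,v$ using any $f_j=vz$ (which exists since $v\mid p_1p_2\mid f_1\cdots f_s$). The cost of your route is that it front-loads Theorem 6.7 and Theorem 6.5 and requires the careful residual-case analysis; what it buys is that the combinatorial rerouting, which is the delicate part of the paper's argument, is replaced by the trivial observation that $(J:m)$ depends only on the monomial $m$.
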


\begin{proof}
 Let $u=p_0,....,p_{2k+1}=v$ be an even-connection between $u$ and $v$ with respect to $e_1....e_s$. We shall construct an even-connection $q_0,....,q_{2r+1}$ between $u$ and $v$ with respect to $f_1....f_s$.\\
 
  Let $i$ be minimal such that $p_{2i+1}p_{2i+2}$ is not equal to any edge $f_1,...,f_s$. Let $q_0=p_0,...,q_{2i+1}=p_{2i+1}$. We have $(up_1)(p_2p_3)...(p_{2k} v) e_{t_1}....e_{t_{s-k}}=(uv)f_1....f_s$. Then $p_{2i+1}(p_{2i+2} p_{2i+3})....(p_{2k}v)e_{t_1}...e_{t_{s-k}}=vf_{j_1}....f_{j_{s-i}}$. If $v=p_{2i+1}$ we are done. Otherwise $p_{2i+1}$ divides one of the $f$s; without loss of generality let $f_{j_1}=p_{2i+1}q_{2i+2}$. If $vq_{2i+2}$ is an edge in $G$, we are done by taking $q_{2i+3}=v$. Otherwise we have $v q_{2i+2} f_{j_2}....f_{s-i}$ is an $(s-i)$-fold product of edges $g_1....g_{s-i}$, where without loss of generality $g_1=q_{2i+2} q_{2i+3}$ and $f_{j_2}=q_{2i+3} q_{2i+4}$. After selecting (without loss of generality) $g_l=q_{2i+2l} q_{2i+2l+1}$ and $f_{j_{l+1}}=q_{2i+2l+1} q_{2i+2l+2}$, we select $q_{2i+2l+3}$ inductively. If $v q_{2i+2l+2}$ is an edge in $G$, we are done by choosing $q_{2i+2l+3}=v$. Other wise, $g_{l+1}....g_{s-i}=vq_{2i+2l+2}f_{j_{l+2}}....f_{j_{s-i}}$.  If $v$ is  connected to $q_{2i+2l+2k}$ for some $k$ in $G$ then we are done by choosing $q_{2i+2l+2k+1}=v$. If not then $g_1....g_{s-i} = v g_1g_2...g_{s-i-1} q_{2i+2s-2} $; but this will force $g_{s-i}=q_{2i+2s-2} v$, contradicting the fact that $v$ is not connected to $q_{2i+2l+2k}$ for any $k$.\\
  
  The conditions $1,2,4$ of the definition are automatically satisfied by our construction. Condition $3$ is satisfied because each $q_{2i+1} q_{2i+2}$ is $f_{r_i}$ for some integer $r_i$ and $q_{2i+3} q_{2i+4}$ is some $f_{r_{i+1}}$ where $r_{i+1} \notin \{r_1,...,r_i\}$.
\end{proof}

 We now observe that all edges of $G$ belong to $(I(G)^{s+1} : e_1....e_s)$. If $uv$, $u$ may be equal to $v$, belongs to $(I(G)^{s+1} :e_1....e_s)$ and $uv$ is not an edge, then we prove that $u$ and $v$ has to be even-connected with respect to the $s$-fold product $e_1....e_s$. The conditions $1,2,3,4$ are satisfied by the way of construction.\\
 
\begin{theorem}
Every generator $uv$ ($u$ may be equal to $v$) of  $(I(G)^{s+1} : e_1....e_s)$ is either an edge of $G$  or even-connected with respect to $e_1....e_s$, for $s \geq 1$.
\end{theorem}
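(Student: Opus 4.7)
The plan is to induct on $s$. For the base case $s=1$, given a minimal generator $uv$ of $(I(G)^2 : e_1)$ that is not an edge, Theorem $6.1$ together with a degree count forces $uv \cdot e_1 = f_1 f_2$ for edges $f_1, f_2$. Writing $f_1 = u p_1$, one has $p_1 \neq u$ (no loops) and $p_1 \neq v$ (else $uv = f_1$ is an edge), so $p_1$ is an endpoint of $e_1 = p_1 q$; this forces $f_2 = vq$ and yields the even-connection $u, p_1, q, v$.

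For the inductive step with $s \geq 2$, let $uv$ be a minimal generator of $(I(G)^{s+1} : e_1 \cdots e_s)$ that is not an edge. Degree counting combined with Theorem $6.1$ gives $uv \cdot e_1 \cdots e_s = f_1 \cdots f_{s+1}$ for edges $f_j$. Pick $f_1 = u p_1$; since $G$ is simple, $p_1 \neq u$, and $p_1 \neq v$ because $uv$ is not an edge. Hence $p_1$ divides some $e_i$, and after relabelling take $e_1 = p_1 q$. Cancelling $f_1$ and $e_1$ from the monomial identity yields $v q \cdot e_2 \cdots e_s = f_2 \cdots f_{s+1}$, so $vq \in (I(G)^s : e_2 \cdots e_s)$.

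If $vq$ is an edge of $G$, then $u, p_1, q, v$ is already the desired even-connection. Otherwise, I claim any minimal generator $m$ of $(I(G)^s : e_2 \cdots e_s)$ dividing $vq$ must be $vq$ itself (when $v \neq q$) or $v^2$ (when $v = q$). Indeed, a degree argument shows the ideal contains no variables, so by Theorem $6.1$ one has $\deg m = 2$; but any degree-$2$ divisor of $vq$ distinct from $vq$ must be $v^2$ or $q^2$, and either forces $v = q$. In both situations $m$ is not an edge of $G$ (simple graphs have no loops), so the inductive hypothesis applied to $m$ produces an even-connection $q_0, q_1, \ldots, q_{2r+1}$ with respect to $e_2 \cdots e_s$ whose endpoints are $\{v, q\}$ (or both equal to $v$ in the degenerate case).

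To conclude, splice this inductive path onto the initial segment $u, p_1$ (reversing the inductive path if needed so that $q_0 = q$, or $q_0 = v$ when $v=q$). The combined sequence $p'_0 = u$, $p'_1 = p_1$, $p'_{j+2} = q_j$ for $0 \leq j \leq 2r+1$ satisfies all four conditions of an even-connection with respect to $e_1 \cdots e_s$ between $u$ and $v$: the pair $p'_1 p'_2 = p_1 q = e_1$, the remaining odd-indexed pairs inherit their labels as $e_i$ from the inductive path, every adjacent pair is an edge of $G$ (with $p'_0 p'_1 = f_1$), and the multiplicity condition holds since $e_1$ is used exactly one additional time beyond its usage in the inductive path. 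The main technical obstacle is the structural claim pinning down which monomials can be minimal generators dividing $vq$, together with handling the $v = q$ degeneracy cleanly, since this is what guarantees the induction delivers an even-connection between exactly the vertices needed for the splice.
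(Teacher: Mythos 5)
Your proof is correct, but it is organized differently from the paper's. The paper argues by contradiction: assuming $uv$ is neither an edge nor even-connected, it greedily builds the entire alternating chain $f_0=up_0$, $e_{i_0}=p_0q_1$, $f_1=q_1p_1$, $e_{i_1}=p_1q_2,\dots$ in a single pass, using the assumed non-even-connectedness at every step to guarantee that no $p_j$ equals $v$, and then derives a contradiction from the monomial identity once all $s$ of the $e_i$ are exhausted (the identity forces $p_s=v$). You instead peel off one link $f_1=up_1$, $e_1=p_1q$, cancel to land in $(I(G)^{s}:e_2\cdots e_s)$, use Theorem 6.1 to see that $vq$ (or $v^2$) is itself a minimal generator there, apply the inductive hypothesis, and splice. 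Unrolled, your recursion traces exactly the paper's chain, so the combinatorial content is the same; what your packaging buys is a cleaner termination argument (no global counting at the end) and a localized check of condition 3 of Definition 6.2 (one extra use of $e_1$ against one extra copy of $e_1$ in the multiset), at the cost of invoking Theorem 6.1 for the smaller colon ideal and of handling the path reversal and the $v=q$ degeneracy explicitly, both of which you do correctly. One step worth spelling out: $p_1$ divides some $e_i$ because $p_1\notin\{u,v\}$ forces the multiplicity of $p_1$ in $uv$ to be zero, so the occurrence of $p_1$ coming from $f_1$ must be matched inside $e_1\cdots e_s$.
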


\begin{proof}
Suppose $uv$ is not an edge and $u$ and $v$ are not even-connected. Now $uve_1....e_s=f_0...f_s$ is an $s+1$-fold product of edges, where $f_0=up_0$ such that there is an edge $e_{i_0}=p_0 q_1, 1\leq i_{0} \leq s$. After selecting $f_j=q_j p_j$ and $e_{i_j}=p_j q_{j+1}$, $1 \leq i_j \leq s$ and all $i_j$ are different, we select $f_{j+1}$ and $e_{i_{j+1}}$ inductively. $q_{j+1}$ is part of an edge $q_{j+1} p_{j+1}$ in the $s+1$ fold product $f_0...f_s$. We choose $f_{j+1}=q_{j+1} p_{j+1}$. Now as $u$ and $v$ are not even-connected $p_{j+1}$ is not $v$. So it is part of an edge amongst the remaining $e_i$ s. So there exists $e_{i_{j+1}}=p_{j+1} q_{j+2}$, $ i_{j+1} \in \{1,..,s\} \setminus \{i_1...i_j\}$. Now as $u$ and $v$ are not even-connected, $v \neq p_k$ for any $k$. We observe $f_0..f_s = u (p_0 q_1) (p_1 q_2)...(p_{s-1} q_s) p_s =uv e_1...e_s$. By construction $(p_0 q_1) (p_1 q_2)..(p_{s-1} q_s)= e_1...e_s$. This forces $p_s=v$, which is a contradiction.
\end{proof}

\begin{eg}
Let $I(G)=(xy,xu,xv,xz,yz,yw)$. Then $(I(G)^2:xy)=I(G)+(z^2,uz,vz,wz,uw,vw)$. Here $z$ is even-connected to itself and $u,v,w$ with respect to $xy$; also $u,w$ and $v,w$ are even-connected with respect to $xy$.
\end{eg}

  We observe that $(I(G)^{s+1}:e_1....e_s)$ need not be square free as there is a possibility that some vertex $u$ is even-connected to itself with respect to $e_1....e_s$. So we polarize $(I(G)^{s+1}:e_1....e_s)$ to get a square free quadratic monomial ideal (i.e. an edge ideal) $(I(G)^{s+1} : e_1....e_s)^{\p}$. For details of polarization we refer to [9], section $3.2$ of [10] and exercise $3.15$ of [10]. Here we just recall the definition and one theorem which states a quadratic monomial ideal and its polarization have same regularity.
  
\begin{definition}
For any quadratic monomial ideal $I$ in $K[x_1....x_n]$, $I^{\p}$ is a square free quadratic monomial ideal in $K[x_1,....,x_n,x_1',....,x_n']$ where $I^{\p}=<x_i x_j,x_k x_k'| x_i x_j \in I ,x_k^2 \in I>$. 
\end{definition}

 The following theorem, which we state without proof is a special case of Proposition $1.3.4$ of [9], we also refer to section $3.2$ and exercise $3.15$ of [10].\\
 
\begin{theorem}
$\reg(I^{\p})=\reg(I)$.
\end{theorem}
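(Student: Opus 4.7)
The plan is to exhibit a sequence of linear forms that is regular on $S'/I^{\p}$ whose quotient recovers $S/I$, then invoke the standard fact that Castelnuovo--Mumford regularity is unchanged modulo a regular sequence of linear forms (so regularity of the ideal is preserved as well).

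First, let $x_{i_1},\ldots,x_{i_r}$ be the variables with $x_{i_k}^2\in I$, and set $\theta_k := x_{i_k}-x_{i_k}'$ for $1\leq k\leq r$. I would show that $\theta_1,\ldots,\theta_r$ forms a regular sequence on $S'/I^{\p}$ by induction on $k$: assuming the first $k-1$ are already known regular, one must verify that $\theta_k$ is a non-zero-divisor on $S'/\bigl(I^{\p}+(\theta_1,\ldots,\theta_{k-1})\bigr)$. The cleanest route is via associated primes: because the auxiliary variable $x_{i_k}'$ appears in exactly one minimal generator of $I^{\p}$, namely $x_{i_k}x_{i_k}'$, any minimal prime of $I^{\p}$ containing $x_{i_k}'$ may be ``swapped'' to one containing $x_{i_k}$ instead without losing the covering property. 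From this one deduces that no associated prime at any stage contains both $x_{i_k}$ and $x_{i_k}'$, hence $\theta_k$ lies outside every such associated prime.

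Second, I would verify the $K$-algebra isomorphism
\[
S'\bigl/\bigl(I^{\p}+(\theta_1,\ldots,\theta_r)\bigr) \;\cong\; S/I,
\]
induced by the surjection fixing each $x_i$ and sending $x_{i_k}'\mapsto x_{i_k}$. Under this map, each generator $x_kx_k'$ of $I^{\p}$ maps to $x_k^2\in I$, while the squarefree generators $x_ix_j\in I^{\p}$ are unchanged; a direct degree count shows the kernel is exactly $(\theta_1,\ldots,\theta_r)$.

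Finally, I apply the standard fact that if $\theta$ is a linear non-zero-divisor on a finitely generated graded module $M$, then $\reg(M)=\reg(M/\theta M)$; this follows directly from the short exact sequence $0\to M(-1)\xrightarrow{\;\theta\;} M\to M/\theta M\to 0$ together with the behavior of regularity along such sequences. Iterating $r$ times yields $\reg(S'/I^{\p})=\reg(S/I)$, and since $\reg(J)=\reg(R/J)+1$ for any graded ideal $J$ in a standard graded polynomial ring $R$ (obtained by comparing Betti tables via $0\to J\to R\to R/J\to 0$), we conclude $\reg(I^{\p})=\reg(I)$. The principal technical obstacle is the regular-sequence verification; in full generality this is precisely the content of Proposition~$1.3.4$ of [9], but the quadratic setting here permits the direct combinatorial argument sketched above.
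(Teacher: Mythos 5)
The paper does not prove this statement at all: it is stated explicitly ``without proof'' as a special case of Proposition~$1.3.4$ of [9], with pointers to Section~$3.2$ and Exercise~$3.15$ of [10]. So there is no in-paper argument to compare against; what you have written is the standard deformation argument for polarization, and it is the right one. Your overall structure is sound: the differences $\theta_k=x_{i_k}-x_{i_k}'$ form a regular sequence on $S'/I^{\p}$, the quotient by them recovers $S/I$, a linear nonzerodivisor preserves all graded Betti numbers (tensoring a minimal free resolution of $M$ down to $S'/(\theta)$ stays minimal and exact), and $\reg(J)=\reg(R/J)+1$ transfers the conclusion from quotient rings to ideals.

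The one place your sketch is genuinely thin is the inductive step of the regular-sequence claim. Your ``swapping'' argument via minimal primes is valid for $I^{\p}$ itself, because $I^{\p}$ is squarefree, hence radical, hence $\mathrm{Ass}=\mathrm{Min}$, and a minimal vertex cover never contains both $x_{i_k}$ and $x_{i_k}'$ (the latter covers only the edge $x_{i_k}x_{i_k}'$). But at stage $k>1$ the ring $S'/(I^{\p}+(\theta_1,\dots,\theta_{k-1}))$ is $\tilde S/J_{k-1}$ where $J_{k-1}$ contains the squares $x_{i_j}^2$ for $j<k$; this ideal is no longer squarefree, so ``associated primes equal minimal primes'' fails and the phrase ``from this one deduces that no associated prime at any stage contains both'' does not follow from the statement about $I^{\p}$. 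The gap is fixable by a direct computation: every associated prime of a monomial ideal has the form $(J_{k-1}:m)$ for a monomial $m\notin J_{k-1}$; if such a prime contained $x_{i_k}'$ then, since the only generator involving $x_{i_k}'$ is $x_{i_k}x_{i_k}'$, one gets $x_{i_k}\mid m$, and then $x_{i_k}m\in J_{k-1}$ would force a quadratic generator divisible by $x_{i_k}^2$, which $J_{k-1}$ does not have since $x_{i_k}$ is not yet depolarized. So no associated prime of $J_{k-1}$ contains both variables and $\theta_k$ is a nonzerodivisor. A second, smaller quibble: the short exact sequence $0\to M(-1)\to M\to M/\theta M\to 0$ by itself only yields $\reg(M/\theta M)\le\reg(M)$; for the equality you should invoke the Betti-number argument (or the local cohomology characterization) rather than the sequence alone. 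With those two points repaired, your proof is complete and self-contained, which is arguably an improvement over the paper's bare citation.
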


   Clearly by Theorems $6.1$, $6.5$ and $6.7$, $(I(G)^{s+1} : e_1....e_s)^{\p}$ is an edge ideal with the same regularity as $\reg (I(G)^{s+1} : e_1....e_s)$. We describe the graph associated to this edge ideal in the following Lemma:\\
 
\begin{lemma}
$(I(G)^{s+1} : e_1....e_s) ^{\p}$ is the edge ideal of a new graph $G'$ which has:\\
1. All vertices and edges of $G$.\\
2. Any two vertices $u,v, u\neq v$ of $G$ that are even-connected with respect to $e_1....e_s$ are connected by an edge in $G'$.\\
3. For every vertex $u$ which is even connected to itself with respect to $e_1....e_s$, there is a new vertex $u'$ which is connected to $u$ by an edge and not connected to any other vertex (so $uu'$ is a whisker).
\end{lemma}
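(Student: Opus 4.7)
The plan is to assemble Theorems $6.1$, $6.5$, $6.7$ with Definition $6.8$ and read off a bijection between the minimal monomial generators of $J^{\p}$, where $J:=(I(G)^{s+1}:e_1\cdots e_s)$, and the edges of the graph $G'$ described in the statement.

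First I would invoke Theorem $6.1$ to conclude that every minimal monomial generator of $J$ has degree two; so each such generator is either a squarefree product $uv$ with $u\neq v$ or a square $u^2$. Next I would use Theorem $6.7$ to classify those generators: any $uv$ with $u\neq v$ is either an edge of $G$ or else $u$ and $v$ are even-connected with respect to $e_1\cdots e_s$, and any $u^2$ forces $u$ to be even-connected to itself (a self-loop being excluded in a simple graph). Theorem $6.5$ provides the converse, that every even-connected pair (and every self-even-connected vertex) contributes such a generator, and since $I(G)\cdot e_1\cdots e_s \subseteq I(G)^{s+1}$ we have $I(G)\subseteq J$, so every edge of $G$ itself is a generator of $J$. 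Altogether the minimal generators of $J$ are precisely the edges of $G$, the pairs $uv$ with $u\neq v$ even-connected w.r.t.\ $e_1\cdots e_s$, and the squares $u^2$ for the self-even-connected vertices $u$.

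Then I would apply Definition $6.8$: polarization leaves each squarefree quadratic generator $uv$ untouched and replaces every $u^2$ by $uu'$, where $u'$ is a fresh variable introduced solely as the polarization partner of $u$. Hence $J^{\p}$ is a squarefree quadratic monomial ideal in the polynomial ring over $V(G)\cup\{u':u^2\in J\}$, and reading its generator list as an edge set gives exactly the three families (1)--(3) in the statement: original edges of $G$, edges between distinct even-connected vertices, and edges $uu'$ for each self-even-connected vertex $u$.

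The one point of care is verifying that $uu'$ is genuinely a whisker in the sense of Definition $2.4$, i.e.\ that the new vertex $u'$ has degree one in the graph of $J^{\p}$. This is immediate from Definition $6.8$, since $u'$ is created only to polarize the single generator $u^2$ and appears in no other generator of $J^{\p}$. I do not anticipate any substantive obstacle; the lemma is an organizational statement packaging Theorems $6.1$, $6.5$, $6.7$ and Definition $6.8$, and its role is to translate the ideal-theoretic object $(I(G)^{s+1}:e_1\cdots e_s)^{\p}$ into the combinatorial object $I(G')$ so that Fröberg's theorem can be applied in later arguments.
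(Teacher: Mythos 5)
Your proposal is correct and follows essentially the same route as the paper: the paper's proof likewise cites Theorem $6.7$ to classify each degree-two generator as an edge of $G$ or an even-connection (with the $u=v$ case handled by polarization producing the whisker $uu'$), and cites Theorems $6.5$ and $6.7$ for the converse inclusion. Your write-up is somewhat more explicit than the paper's (spelling out the role of Theorem $6.1$ and the containment $I(G)\subseteq (I(G)^{s+1}:e_1\cdots e_s)$, which the paper notes just before Theorem $6.7$), but there is no substantive difference in approach.
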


\begin{proof}
By Theorem $6.7$, every generator $uv$ ($u$ may be equal to $v$) of  $(I(G)^{s+1} : e_1....e_s)$ is either an edge of $G$  or even-connected with respect to $e_1....e_s$, for $s \geq 1$. If it is an edge in $G$, it satisfies condition 1; if it is an even-connection with $u \neq v$ it satisfies condition 2; if it is an even-connection with $u=v$, then by definition of polarization there will be a whisker $u'$ on $u$ in $G'$ and hence it will satisfy condition 3. Conversely edges described by the conditions 1,2 and 3 belong to $G'$ by Theorems $6.5$ and $6.7$.
\end{proof}

\begin{eg}
Let $G$ be the following graph:

\begin{center}
\begin{tikzpicture}
[scale=.8, vertices/.style={draw, fill=black, circle, inner sep=0.5pt}]
\useasboundingbox (-2.5,-2.4) rectangle (2.5,2.4);
\node [anchor=base] at (-3.3,-.1){$G$:};
\node [vertices] (1) at (0:2) {};
\node [anchor=base] at (2.4,-.1) {$w$};
\node [vertices] (2) at (60:2) {};
\node [anchor=base] at (60:2.3) {$z$};
\node [vertices] (3) at (120:2) {};
\node [anchor=base] at (120:2.3) {$y$};
\node [vertices] (4) at (180:2) {};
\node [anchor=base] at (-2.4, -.1) {$x$};
\node [vertices] (5) at (240:2) {};
\node [anchor=base] at (240:2.5) {$t$};
\node [vertices] (6) at (300:2) {};
\node [anchor=base] at (300:2.5) {$s$};
\foreach \to/\from in {2/1, 3/1, 4/1, 4/3, 5/4, 6/1}
	\draw [-] (\to)--(\from);
\end{tikzpicture}
\end{center}

Then the graph $G'$ associated to $(I(G)^2:xw)^\p$ is the following:

\begin{center}
\begin{tikzpicture}
[scale=.8, vertices/.style={draw, fill=black, circle, inner sep=0.5pt}]
\useasboundingbox (-2.5,-2.4) rectangle (2.5,2.4);
\node [anchor=base] at (-3.3,-.1){$G'$:};
\node [vertices] (1) at (0:2) {};
\node [anchor=base] at (2.4,-.1) {$w$};
\node [vertices] (2) at (60:2) {};
\node [anchor=base] at (60:2.3) {$z$};
\node [vertices] (3) at (120:2) {};
\node [anchor=base] at (120:2.3) {$y$};
\node [vertices] (4) at (180:2) {};
\node [anchor=base] at (-2.4, -.1) {$x$};
\node [vertices] (5) at (240:2) {};
\node [anchor=base] at (240:2.5) {$t$};
\node [vertices] (6) at (300:2) {};
\node [anchor=base] at (300:2.5) {$s$};
\node [vertices] (7) at (220:2.2) {};
\node [anchor=base] at (220:2.6) {$y'$};
\foreach \to/\from in {2/1, 3/1, 3/2, 4/1, 4/3, 5/2, 5/3, 5/4, 6/1, 6/3, 6/5,7/3}
	\draw [-] (\to)--(\from);
\end{tikzpicture}
\end{center}

\end{eg}

 Next we prove several lemmas that will be useful to get our main results.\\
 
\begin{lemma}
Suppose $u=p_0,....,p_{2k+1}=v$ is an even-connection between $u$ and $v$ and $z=q_0,....,q_{2l+1}=w$ is an even connection between $z$ and $w$, both with respect to $e_1....e_s$. If for some $i$ and $j$, $p_{2i+1} p_{2i+2}$ and $q_{2j+1} q_{2j+2}$ has a common vertex in $G$ then $u$ is even-connected to either $z$ or $w$ with respect to $e_1....e_s$  and $v$ is even-connected to either $z$ or $w$ with respect to $e_1....e_s$.
\end{lemma}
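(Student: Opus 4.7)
My plan is to carry out a case analysis on the parity of the common vertex's indices in the two paths. There are four configurations for the shared vertex $x$: (A) $x = p_{2i+1} = q_{2j+1}$, (B) $x = p_{2i+2} = q_{2j+2}$, (C) $x = p_{2i+1} = q_{2j+2}$, and (D) $x = p_{2i+2} = q_{2j+1}$. In each case I would form candidate walks by splicing the prefix of one path with the suffix of the other at $x$. A direct length count shows that in every case, exactly two of the four possible splicings $\{u,v\} \to \{z,w\}$ have odd length: in cases (A) and (B) these are $u$-to-$w$ and $v$-to-$z$, while in cases (C) and (D) they are $u$-to-$z$ and $v$-to-$w$. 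Thus in each case the two odd-length candidates connect $u$ and $v$ each to a distinct member of $\{z,w\}$, as the lemma demands. The alternating R, P, R, \ldots, R structure is then a routine check at the junction at $x$, using the alternation of the two original paths and the parities dictated by the case.

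The main obstacle is condition (3) of Definition 6.2, which bounds the multiplicity of each $e_\alpha$ in the walk by $|\{m : e_m = e_\alpha\}|$. The naive splice may overuse some $e_\alpha$ that occurs both in the used prefix of one path and in the used portion of the other. To handle this I would argue as follows: every overused $e_\alpha = \{a,b\}$ yields a new pair $(r,r')$ with $p_{2r+1}p_{2r+2} = q_{2r'+1}q_{2r'+2} = e_\alpha$, and the two occurrences share \emph{both} vertices $a$ and $b$. Choosing the case consistent with the relative orientations of the two copies of $e_\alpha$ in the spliced walk (for instance switching from the original Case A to Case C when the two occurrences of $e_\alpha$ are oppositely oriented), the new naive splice breaks precisely at an endpoint of $e_\alpha$, and $e_\alpha$ is then used at most once in the resulting walk.

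Iterating this reduction strictly shrinks the multiset of product edges used by the candidate walk, so the process must terminate in finitely many steps. The terminal walk satisfies all three conditions of Definition 6.2 and supplies the required even-connection. In particular, although the switch from one case to another along the way can reassign which of $\{z,w\}$ we connect $u$ to (respectively $v$), the lemma only asks for connection to \emph{one} of them, so the conclusion follows in every case.
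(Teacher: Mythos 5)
Your splicing set-up is the same as the paper's: locate a shared vertex, case on its parities in the two paths, and glue a prefix of one path to a (possibly reversed) portion of the other; the parity bookkeeping you describe is correct, and you have correctly isolated condition (3) of Definition 6.2 as the only real issue. The gap is in your termination argument for the repair loop. You claim each repair ``strictly shrinks the multiset of product edges used by the candidate walk,'' but this fails exactly in the situation you flag, namely when the two occurrences of the overused $e_\alpha$ are oppositely oriented and you must switch cases. Concretely: start in Case (A) at $(i,j)$, so the $u$--$w$ splice uses the $p$-edges of index $<i$ together with the $q$-edges of index $\geq j$. If the overused $e_\alpha$ sits at $p$-index $r<i$ and $q$-index $r'\geq j$ with opposite orientation, the repaired walk is a $u$--$z$ splice, which uses the $q$-edges of index $\leq r'$ --- a \emph{prefix} of the $q$-path rather than a suffix. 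When $j>0$ this brings in the $q$-edges of index $0,\dots,j-1$, which were not used before, so the new multiset is not contained in the old one and your stated measure does not decrease. As written, the iteration is not known to terminate.

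The argument is salvageable, but you need a different well-founded measure: since $e_\alpha = p_{2r+1}p_{2r+2} = q_{2r'+1}q_{2r'+2}$ as edges of $G$, the vertex $p_{2r+1}$ is an endpoint of the $q$-occurrence, so every repair re-splices at $p$-position $2r+1$ and the number of $p$-product-edges retained in the prefix strictly drops; this index cannot decrease forever. (Your side claim that $e_\alpha$ ``is then used at most once'' in the repaired walk is also unjustified --- it can recur both earlier in the $p$-prefix and elsewhere in the retained $q$-portion --- but it is not needed; only the multiplicity bound matters, and further iterations handle any residual overuse.) The paper sidesteps the entire loop by choosing $i$ \emph{minimal} such that $p_{2i+1}p_{2i+2}$ meets some $q$-product-edge: then no $p$-edge of index $<i$ can equal any $q$-product-edge at all (equal edges share a vertex), so the spliced walk satisfies condition (3) in one step, with the statement for $v$ following by symmetry. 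You should either adopt that minimal choice or replace your termination measure by the decreasing $p$-splice index.
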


\begin{proof}
We prove it for $u$, and the proof for $v$ follows by symmetry. Let $i$ be the smallest integer such that there is $j$  with the required property. If $p_{2i+1}= q_{2j+1}$ then $u=p_0,...,p_{2i+1}=q_{2j+1},q_{2j+2},q_{2j+3}
,...,q_{2l+1}=w$ gives an even-connection between $u$ and $w$ with respect to $e_1....e_s$ (conditions 1,2 and 4 are automatically satisfied and condition 3 is satisfied as $i$ is the smallest integer such that there is a $j$). Similary if $p_{2i+1}= q_{2j+2}$ then $u=p_0,...,p_{2i+1}=q_{2j+2},q_{2j+1},q_{2j}
,...,q_{0}=z$ gives an even-connection between $u$ and $z$ with respect to $e_1....e_s$; if $p_{2i+1}$ is not same as either $q_{2j+1}$ or $q_{2j+2}$ and $p_{2j+2}=q_{2j+1}$ then $u=p_0,...,p_{2i+1},p_{2j+2}=q_{2j+1},q_{2j+2},q_{2j+1},q_{2j}
,...,q_{0}=z$ gives an even-connection between $u$ and $z$ with respect to $e_1....e_s$; if $p_{2i+1}$ is not same as either $q_{2j+1}$ or $q_{2j+2}$ and $p_{2j+2}=q_{2j+2}$ then $u=p_0,...,p_{2i+1},p_{2j+2}=q_{2j+2},q_{2j+1},q_{2j+2},
,...,q_{2l+1}=w$ gives an even-connection between $u$ and $w$ with respect to $e_1....e_s$; in each of these cases conditions 1,2 and 4 are satisfied automatically and condition 3 is satisfied as $i$ is the smallest integer with the property. This covers all the cases.
\end{proof}

 The next two lemmas are results about gap free graphs:\\
 
\begin{lemma}
If $G$ is gap free then so is the graph $G'$ associated to $(I(G)^{s+1}: e_1....e_s)^{\p}$, for every $s$-fold product $e_1....e_s$.
\end{lemma}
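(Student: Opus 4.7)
I argue by contradiction. Suppose $G'$ contains a gap $\{ab,cd\}$, i.e., vertex-disjoint $G'$-edges $ab$ and $cd$ with no $G'$-edge between $\{a,b\}$ and $\{c,d\}$. By Lemma 6.9, each of $ab,cd$ is (i) an edge of $G$, (ii) a whisker $uu'$ coming from a self-even-connection, or (iii) an edge arising from an even-connection $u=p_0,p_1,\dots,p_{2k+1}=v$ between distinct vertices of $G$. My plan is to attach to each endpoint of $ab,cd$ a canonical incident $G$-edge (the edge itself when it lies in $G$, otherwise the first step $up_1$ of its underlying even-connection), apply the gap-freeness of $G$ to pairs of such $G$-edges, then promote the resulting $G$-edge to a $G'$-edge between $\{a,b\}$ and $\{c,d\}$ via Observation 6.4 and Lemma 6.10, obtaining the contradiction.

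If both $ab,cd\in E(G)$, gap-freeness of $G$ closes the argument at once. If $ab\in E(G)$ and $cd$ comes from an even-connection $c=q_0,q_1,\dots,q_{2l+1}=d$, I pair $ab$ with $cq_1$. Any identification between $\{a,b\}$ and $\{c,q_1\}$ already forces a $G$-edge between $\{a,b\}$ and $\{c,d\}$ (for example $a=q_1$ makes $ac$ a $G$-edge), contradicting the gap. With vertex-disjointness established, gap-freeness of $G$ yields a $G$-edge between $\{a,b\}$ and $\{c,q_1\}$; the options $ac,bc$ directly contradict the gap, whereas $aq_1$ (resp.\ $bq_1$) exhibits $a$ (resp.\ $b$) as a $G$-neighbor of $q_1$, so Observation 6.4 turns $ad$ (resp.\ $bd$) into a $G'$-edge, again a contradiction. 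The whisker sub-case is parallel, using the first step of the self-even-connection of $c$.

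In the remaining case, both $ab,cd$ arise from even-connections $a=p_0,\dots,p_{2k+1}=b$ and $c=q_0,\dots,q_{2l+1}=d$. Applying the pairing strategy to $ap_1$ and $cq_1$: each identification inside $\{a,p_1,c,q_1\}$ is killed by the gap hypothesis together with Observation 6.4 (e.g.\ $p_1=q_1$ makes $c$ a $G$-neighbor of $p_1$, hence $bc\in E(G')$) and Lemma 6.10 (whenever the inner edges $p_1p_2$ and $q_1q_2$ share a vertex). With $\{a,p_1\}$ and $\{c,q_1\}$ disjoint, gap-freeness of $G$ delivers a $G$-edge between them; the options $ac,aq_1,cp_1$ each immediately contradict the gap. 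The stubborn remaining possibility is that the $G$-edge produced is $p_1q_1$, between two interior vertices of the two even-connections. I dispatch this by iterating the pairing at the opposite ends ($p_{2k}b$ vs $cq_1$, $ap_1$ vs $q_{2l}d$, and $p_{2k}b$ vs $q_{2l}d$) and with inner edges ($p_1p_2$ vs $q_1q_2$, etc.), where Lemma 6.10 closes the argument whenever two inner edges share a vertex. The successive applications of gap-freeness force enough interior $G$-edges between the two paths that, combined with Observation 6.4 on each path, I can splice portions of the two even-connections into a single walk from a vertex of $\{a,b\}$ to a vertex of $\{c,d\}$ satisfying all four conditions of Definition 6.2, producing the contradicting $G'$-edge.

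The main technical obstacle will be verifying the multiplicity condition~3 of Definition 6.2 for the constructed walk: naively concatenating pieces of the $p$- and $q$-even-connections can overuse some $e_i$, since both originally allot part of that $e_i$'s multiplicity in $e_1\cdots e_s$. The resolution is to use the freshly established interior $G$-edges as bypasses around any offending inner edge, so that in the final walk every $e_i$ is used at most as many times as it occurs in the product $e_1\cdots e_s$.
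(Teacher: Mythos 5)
Your overall skeleton is the same as the paper's: classify each edge of a putative gap in $G'$ as a $G$-edge, a whisker, or an even-connected pair; replace each non-$G$-edge by the first step $up_1$ of its even-connection; apply gap-freeness of $G$ to the resulting pair of $G$-edges; and promote the $G$-edge so obtained to a $G'$-edge via Observation~6.4, contradicting the gap. The cases you close ($ac$, $aq_1$, $cp_1$, and the mixed case) are handled exactly as in the paper. The problem is that you do not actually close the one case that carries all the difficulty: both edges arise from even-connections and gap-freeness hands you the interior edge $p_1q_1$. There you announce that ``successive applications of gap-freeness force enough interior $G$-edges'' to splice the two paths, and you defer the verification of condition~3 of Definition~6.2 to an unexecuted ``bypass'' construction. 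As written this is a plan, not a proof: you never exhibit the walk, never check condition~3 for it, and the iteration over ``opposite ends'' ($p_{2k}b$ vs.\ $q_{2l}d$, etc.)\ is not shown to terminate in a valid even-connection.

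The missing idea is an order-of-operations one, and it makes the bypasses unnecessary. Invoke Lemma~6.13 \emph{first}: if any inner edge $p_{2t_1+1}p_{2t_1+2}$ of the $p$-path shares a vertex with any inner edge $q_{2t_2+1}q_{2t_2+2}$ of the $q$-path, that lemma already produces an even-connection (hence a $G'$-edge) between an endpoint of one pair and an endpoint of the other, contradicting the gap. So you may assume the two even-connections are vertex-disjoint. Under that assumption no $e_i$ can occur as an inner edge of both paths (it would force a common vertex), so once gap-freeness gives you $p_1q_1\in E(G)$, the single concatenation $b=p_{2k+1},p_{2k},\dots,p_1,q_1,q_2,\dots,q_{2l+1}=d$ satisfies conditions 1, 2 and 4 by construction and condition 3 because each $e_i$ is used only by inner edges coming from one of the two original paths. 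This yields $bd\in E(G')$ directly and finishes the case; it is exactly how the paper's proof proceeds. You should restructure the final case along these lines rather than attempting the bypass argument.
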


\begin{proof}
There are three possibilities of gap formation in $G':$\\ 1. Between two edges from $G$.\\
2. Between two edges that are not edges in $G$.\\
3. Between two edges where one of them is an edge in $G$ another is not.\\

  No two edges in $G$ can form a gap in $G$ as $G$ is gap free. So they can't form an edge in $G'$ as in $G'$ no edge of $G$ is being deleted.\\
  
  For the second case suppose $uv$ and $zw$ are even-connected with respect to $e_1....e_s$ and neither $uv$ nor $zw$ is an edge in $G$. Without loss of generality we may assume $\text{gcd} (uv,zw)=1$ as there is no question of gap formation otherwise. Let $u=p_0,....,p_{2k+1}=v$ be an even-connection between $u,v$ with respect to $e_1....e_s$ and let $z=q_0,....,q_{2l+1}=w$ be an even-connection between $z,w$ with respect to $e_1....e_s$. In light of Lemma $6.13$, we may assume for no $i,j$, $p_i = q_j$. If $u=q_1$ then $zu=zq_1$ is an edge in $G$ and if $z=p_1$ then $uz=up_1$ is an edge in $G$, so there is nothing to prove. Otherwise as $up_1$ and $zq_1$ are edges in $G$ and $G$ is gap free there are four possibilities:\\  
a. $u$ is connected to $z$ in $G$, in which case $uv$ (or $uu'$ in case $u=v$) and $zw$ (or $zz'$ in case $z=w$) can't form a gap, as in that case $uz$ is an edge in $G'$ too.\\
b. $p_1$ is connected to $z$, in which case $z,p_1,....,p_{2k+1}=v$ is an even-connection between $z$ and $v$ in $G$ so $zv$ is an edge in $G'$ hence $uv$ (or $uu'$ if $u=v$) and $zw$ (or $zz'$ if $z=w$) can't form a gap.\\
c. $p_1$ is connected to $q_1$, in which case $v=p_{2k+1},p_{2k},....,p_1,q_1,q_2,.....,q_{2l+1}=w$ gives an even-connection between $v$ and $w$, and $vw$ is an edge in $G'$.\\
d. $q_1$ is connected to $u$, in which case $u,q_1,....,q_{2l+1}=w$ is an even-connection between $u$ and $w$ in $G$ so $uw$ is an edge in $G'$ hence $uv$ (or $uu'$ if $u=v$) and $zw$ (or $zz'$ if $z=w$) can't form a gap.\\

  In the third case, $u,v$  are even-connected with respect to $e_1....e_s$ and $zw$ is an edge in $G$ and $uv$ is not an edge in $G$. Like before, we may assume $\text{gcd} (uv,zw)=1$. Let $u=p_0,....,p_{2k+1}=v$ be an even-connection between $u,v$ with respect to $e_1....e_s$. If $z=p_1$ then $uz=up_1$ is an edge in $G$ and if $w=p_1$ then $uw=up_1$ is an edge in $G$, so there is nothing to prove in these cases. Otherwise as $up_1$ and $zw$ are edges in $G$ and $G$ is gap free there are four choices:\\  
a. $u$ is connected to $z$, in which case $uv$ (or $uu'$ in case $u=v$) and $zw$ can't form a gap as in that case $uz$ is an edge $G'$ too.\\
b. $p_1$ is connected to $z$, in which case $z,p_1,....,p_{2k+1}=v$ is an even-connection between $z$ and $v$ in $G$ so $zv$ is an edge in $G'$ hence $uv$ (or $uu'$ if $u=v$) and $zw$  can't form a gap.\\
c. $p_1$ is connected to $w$, in which case $v=p_{2k+1},p_{2k},....,p_1,w$ is an even-connection; hence $uv$ and $zw$ can not form a gap.\\
d. $w$ is connected to $u$, in which case $uw$ is an edge in $G$, hence in $G'$.\\

  This finishes the proof.
\end{proof}

\begin{lemma}
Suppose $G$ is gap free. If $w_1,....,w_n$ is an anticycle in the graph $G'$ defined by $(I(G)^{s+1}:e_1....e_s)$ for some $s\geq 1$ and for $n \geq 5$, then $w_1,....,w_n$ is an anticycle in $G$.
\end{lemma}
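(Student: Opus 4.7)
I would argue by contradiction. Suppose the induced subgraph $G|_W$ on $W=\{w_1,\ldots,w_n\}$ is a proper subgraph of the anticycle $G'|_W$. Since $E(G)\subseteq E(G')$ and the cycle non-edges $w_iw_{i+1}$ are already non-edges in the induced anticycle $G'|_W$ (hence also in $G$), the only possible discrepancy is that some anticycle edge $w_aw_b$ with $b\not\equiv a\pm 1\pmod n$ belongs to $E(G')\setminus E(G)$. By Theorem $6.7$ this edge comes from an even-connection $w_a=p_0,p_1,\ldots,p_{2k+1}=w_b$ in $G$, providing in particular the $G$-edges $w_ap_1$ and $p_{2k}w_b$ as handles.

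The core step handles the case $b=a+2$. The distance-$2$ anticycle edge $w_{a-1}w_{a+1}$, if it lies in $E(G)$, is disjoint from $w_ap_1$, since $w_ap_1\in E(G)$ forces $p_1\notin\{w_{a-1},w_{a+1}\}$ (otherwise $w_ap_1$ would equal the cycle non-edge $w_aw_{a\pm 1}$). Applying gap-freeness of $G$ to these two disjoint edges, a connecting $G$-edge must exist; the options $w_aw_{a\pm 1}$ are cycle non-edges excluded from $E(G)$, so the connection must be $p_1w_{a-1}$ or $p_1w_{a+1}$. Without loss of generality assume $p_1w_{a+1}\in E(G)$. Then Observation $6.4$ (with $j=0$, $j'=k-1$) says that $w_{a+1}$, being a neighbor of $p_1$, and $w_b=w_{a+2}$, being a neighbor of $p_{2k}$, are even-connected in $G$, so $w_{a+1}w_{a+2}\in E(G')$. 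But $w_{a+1}w_{a+2}$ is a cycle edge, hence a non-edge of the induced anticycle $G'|_W$ -- the desired contradiction.

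To reduce the general case to $b=a+2$ I would induct on the cycle-distance $d$ between $w_a$ and $w_b$. Running the same gap-freeness argument on $w_ap_1$ and $w_{a-1}w_{a+1}$, the conclusion $w_{a+1}w_b\in E(G')$ has two possibilities: either $w_{a+1}w_b\notin E(G)$, in which case it is a missing anticycle edge of cycle-distance $d-1$ and the induction hypothesis applies, or $w_{a+1}w_b\in E(G)$. In the latter case the symmetric argument at the other endpoint, applied to $p_{2k}w_b$ and $w_{b-1}w_{b+1}$, either produces $w_aw_{b-1}$ as a missing anticycle edge at cycle-distance $d-1$, or leaves us with the two $G$-edges $w_{a+1}w_b$ and $w_aw_{b-1}$. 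For $d=3$ these two edges are disjoint, and each of the candidate connecting edges in $G$ is either a cycle non-edge or the original missing edge $w_aw_b\notin E(G)$, so gap-freeness of $G$ fails directly. The sub-cases in which one of the auxiliary edges $w_{a-1}w_{a+1}$ or $w_{b-1}w_{b+1}$ itself fails to lie in $E(G)$ produce a missing distance-$2$ anticycle edge, to which the base-case argument applies.

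The main obstacle I expect is the combinatorial bookkeeping in the descent: at each step one must track which auxiliary anticycle edges lie in $G$ and which shorter missing edges arise, and one must ensure the recursion actually terminates rather than cycling among several simultaneously missing distance-$2$ edges (this may require Lemma $6.13$ to merge the relevant even-connections). The hypothesis $n\geq 5$ enters essentially in guaranteeing that the cycle-neighbors $w_{a\pm 1}$ and $w_{b\pm 1}$ are distinct anticycle vertices and that the auxiliary edges $w_{a-1}w_{a+1}$, $w_{b-1}w_{b+1}$, and (for $d\geq 4$) $w_{a+1}w_{b-1}$ actually exist in the anticycle, so that the various applications of gap-freeness have something to bite on.
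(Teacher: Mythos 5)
Your overall strategy (descend on the cycle-distance of a ``missing'' anticycle edge, with distance $2$ as the base case) differs from the paper's, which instead fixes $j$ minimal such that some pair $w_i,w_{i+j}$ is even-connected, manufactures a second index $n-l$ so that $w_{n-l}$ is not a neighbor of $p_1$ while $w_{n-l+1},\dots,w_n$ are, and then plays the two even-connections $w_1\cdots w_{1+j}$ and $w_2\cdots w_{n-l}$ against each other using Lemma $6.13$ and gap-freeness applied to $p_1p_2$. Your version has a genuine gap in the core step: the sentence ``without loss of generality assume $p_1w_{a+1}\in E(G)$'' is not a symmetry. The two candidates produced by gap-freeness are not interchangeable, because $w_{a+1}$ lies between $w_a$ and $w_b=w_{a+2}$ on the cycle and $w_{a-1}$ does not. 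If instead $p_1w_{a-1}\in E(G)$, Observation $6.4$ only gives that $w_{a-1}$ and $w_{a+2}$ are even-connected; these are at cycle-distance $3$ (distance $2$ when $n=5$), so $w_{a-1}w_{a+2}$ is a perfectly legal edge of the anticycle $G'|_W$ and no contradiction results. Repeating the argument at the other handle $p_{2k}w_b$ against $w_{b-1}w_{b+1}$ only adds the possibility $p_{2k}w_{a+3}\in E(G)$, which yields $w_{a-1}w_{a+3}\in E(G')$ --- again an admissible anticycle edge once $n\geq 6$. So the base case does not close.

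The second problem is the one you flag yourself but do not resolve: the base case requires the auxiliary edge $w_{a-1}w_{a+1}$ to lie in $E(G)$, and that edge is itself a distance-$2$ anticycle edge. If every distance-$2$ anticycle edge is an even-connection rather than a $G$-edge, your descent has nowhere to bottom out, and ``apply the base-case argument to it'' is circular. This is not bookkeeping; it is the heart of the matter, and the paper's proof is organized precisely to sidestep it: by taking $j$ minimal among even-connected pairs, every $G'$-edge of the anticycle at cycle-distance less than $j$ is automatically a $G$-edge, so the auxiliary edges it needs are available by construction, and the delicate case of a second genuine even-connection ($w_2w_{n-l}$) is handled with Lemma $6.13$ plus one more application of gap-freeness to $p_1p_2$ versus $w_{n-l}q_{2r}$. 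To repair your argument you would need to import some such minimality device, and you would still have to dispose of the $p_1w_{a-1}$ branch by a different mechanism.
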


\begin{proof}
 First of all, whiskers on any vertex can not be part of any anticycle of length $\geq 5$ as they only have degree $1$. Observe that it is enough to prove that for all $i,j$, $w_i, w_{i+j}$ are never even-connected with respect to $e_1....e_s$. Suppose on the contrary such $i,j$ exists. Without loss of generality we may choose $j$ to be minimal such that for some $i$, $w_i$ and $w_{i+j}$ are even-connected with respect to $e_1....e_s$. Observe that $j\geq 2$ as $w_i w_{i+1}$ can't be connected in an anticycle. Without loss of generality we may further assume $w_1$ and $w_{1+j}$ are even-connected with respect to $e_1....e_s$ via $w_1=p_0,p_1,....,p_{2k+1}=w_{1+j}$. Now observe $w_{2+j}$ is not connected to $p_1$ by an edge in $G$ as that will force $w_{1+j}$ and $w_{2+j}$ to be connected in $G'$ by observation $6.4$ leading to a contradiction. So there exists a smallest $l \geq 0$, $2+j \leq n-l \leq n$ such that $w_{n-l}$ is not connected to $p_1$ by an edge  in $G$. If $l=0$, then $w_{n}$ is not connected to $p_1$ by an edge in $G$ and if $l>0$ then $w_{n-l}$ is not connected to $p_1$ by an edge to $p_1$  in $G$ and $w_n, w_{n-1},..,w_{n-l+1}$ are connected to $p_1$ by an edge in $G$\\
 
   Next, we look at the edge $w_2 w_{n-l}$ in $G'$. If $w_2$ is connected to $p_1$ in $G$ then $w_2,p_1,...,p_{2k+1}=w_{1+j}$ will be an even connection that will violate the minimality of $j$. If $w_2$ is connected to $p_2$ in $G$ then by Observation $6.4$ $w_1 w_2$ has to be an edge in $G'$, which will contradict the fact $w_1....w_n$ is an anticycle. We observe $w_{n-l}$ can't be connected to $p_1$ by selection. If $w_{n-l}$ is connected to $p_2$ and $l=0$  then by Observation $6.4$ $w_1$ and $w_n$ have to be connected to each other in $G'$. If $w_{n-l}$ is connected to $p_2$ and $l>0$ then by Observation $6.4$ $w_{n-l+1}$ and $w_{n-l}$ have to be connected to each other in $G'$. Both cases lead to a contradiction as $w_1....w_n$ is an anticycle, so $w_2$ and $w_{n-l}$ are not connected to each other in $G$ and neither of them are connected to $p_1$ or $p_2$ (and hence $w_2,w_{n-l},p_1,p_2$ are four distinct vertices). As $p_1p_2$ is an edge in $G$, $w_2w_{n-l}$ can not be an edge in $G$; otherwise they will form a gap. So $w_2$ and $w_{n-l}$  are even-connected with respect to $e_1....e_s$. Let $w_2=q_0,....,q_{2r+1}=w_{n-l}$ be an even connection between $w_2$ and $w_{n-l}$ with respect to $e_1....e_s$.\\
   
   If for some $t_1,t_2 \geq 0$, $p_{2t_1+1} p_{2t_1+2}$ and $q_{2t_2+1} q_{2t_2+2}$ are the same edges of $G$ then by Lemma $6.13$, $w_2$ has to be even connected to either $w_1$ or $w_{1+j}$. The first case is not possible as $w_1..w_n$ is an anticycle and the second case is not possible by the minimality of $j$. So for no $t_1,t_2 \geq 0$, $p_{2t_1+1} p_{2t_1+2}$ and $q_{2t_2+1} q_{2t_2+2}$ are the same edges of $G$. So  we look at $w_{n-l} q_{2r}$ and $p_1 p_2$. Observe that $p_1$ is not connected to $w_{n-l}$ because of the selection. If $w_{n-l}$ is connected to $p_2$ and $l=0$  then by Observation $6.4$ $w_1$ and $w_n$ have to be connected to each other in $G'$. If $w_{n-l}$ is connected to $p_2$ and $l>0$ then by Observation $6.4$ $w_{n-l+1}$ and $w_{n-l}$ have to be connected to each other in $G'$. Both cases lead to a contradiction as $w_1....w_n$ is an anticycle. So $p_2$ is not connected to $w_{n-l}$ in $G$. If $p_1$ is connected to $ q_{2r}$ then $w_2$ and $w_{1+j}$ will be even-connected with respect to $e_1....e_s$ violating the minimality of $j$. If $p_2$ is connected to $q_{2r}$ then $w_1$ and $w_2$ will be even-connected and hence connected in $G'$.\\
   
   Hence for no $i,j$ are $w_i$ and $w_{i+j}$ even-connected with respect to $e_1....e_s$. So $w_1....w_n$ is an anticycle in $G$.
\end{proof}

 Using this lemma we get the following theorem of Herzog, Hibi and Zheng (Theorem $1.2$ of [13]) as a corollary:
 
\begin{theorem}
If $I(G)$ has linear resolution, then for all $s \geq 2$,  $I(G)^s$ has regularity $2s$. In other words $I(G)^s$ has a linear minimal free resolution.
\end{theorem}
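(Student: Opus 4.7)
The plan is to reduce the theorem to an application of Corollary 5.3 together with Fröberg's theorem, via the polarization picture set up in Lemmas 6.11, 6.14, 6.15 and Theorem 6.10. Since $I(G)$ has a linear resolution, Fröberg's theorem (Theorem 2.12) gives that $G^c$ is chordal. In particular $G^c$ has no induced $C_4$, so $G$ is gap-free, and $\reg(I(G)) = 2 \leq 4$, which is one of the two hypotheses of Corollary 5.3. So it suffices to verify the other hypothesis: for every $s \geq 1$ and every minimal monomial generator $m$ of $I(G)^s$, $\reg(I(G)^{s+1} : m) \leq 2$. Every such $m$ can be written as an $s$-fold product of edges $e_1 \cdots e_s$, so the task is to show $\reg(I(G)^{s+1} : e_1 \cdots e_s) \leq 2$ for every such product.

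Fix such a product and set $J := (I(G)^{s+1} : e_1 \cdots e_s)$. By Theorem 6.1, $J$ is a quadratic monomial ideal, so its polarization $J^{\p}$ is a squarefree quadratic monomial ideal, i.e. the edge ideal of a graph $G'$ described explicitly in Lemma 6.11. By Theorem 6.10, $\reg(J) = \reg(J^{\p}) = \reg(I(G'))$, so it suffices to show $\reg(I(G')) = 2$, equivalently, by Fröberg, that $(G')^c$ is chordal.

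The chordality of $(G')^c$ has two parts. First, $(G')^c$ contains no induced $C_4$: this is exactly the statement that $G'$ is gap-free, which is Lemma 6.14 (applied to $G$, which is gap-free). Second, $(G')^c$ contains no induced cycle of length $n \geq 5$, i.e. $G'$ contains no anticycle on $n \geq 5$ vertices. Lemma 6.15 says that any such anticycle in $G'$ would already be an anticycle in $G$; but if $G$ had an anticycle on $n \geq 5$ vertices, then $G^c$ would contain an induced cycle of length $n \geq 5$, contradicting the chordality of $G^c$. Hence no such anticycle exists in $G'$, and $(G')^c$ is chordal.

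Combining the two paragraphs, $\reg(I(G)^{s+1} : e_1 \cdots e_s) = 2$ for every $s$-fold product of edges, so the hypotheses of Corollary 5.3 are satisfied and the conclusion $\reg(I(G)^{s+1}) = 2s+2$ for all $s \geq 1$ follows, which is equivalent (by Observation 2.9) to $I(G)^s$ having a linear minimal free resolution for all $s \geq 2$. The main obstacle is really conceptual rather than computational: one must recognize that the two structural statements needed about $G'$ — gap-freeness and absence of long anticycles — correspond precisely to the preservation results proved in Lemmas 6.14 and 6.15, so that the chordality hypothesis on $G^c$ transfers to $(G')^c$ through the polarization.
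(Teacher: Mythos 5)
Your proposal is correct and follows essentially the same route as the paper: both arguments combine Lemma 6.14 (gap-freeness of $G'$), Lemma 6.15 (anticycles of $G'$ descend to $G$), Fr\"oberg's theorem, and Theorem 6.10 on polarization, then conclude via the Section 5 framework. The only cosmetic difference is that you invoke Corollary 5.3 where the paper cites Theorem 5.2 directly, which changes nothing of substance.
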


\begin{proof}
As $I(G)$ has a linear resolution, it is gap free and hence the polarizations of all $(I(G)^{s+1}:e_1....e_s)$ are gap free by Lemma $6.14$, and any anticycle of length $\geq 5$ in the polarization of $(I(G)^{s+1}: e_1....e_s)$ is an anticycle of $G$ by Lemma $6.15$. But as $I(G)$ has linear resolution $G$ does not have an any anticycle.  By Theorem $2.12$   $\reg(I(G)^{s+1}:e_1....e_s)^\text{pol}=2$ for all $e_1....e_s$. Hence $\reg(I(G)^{s+1})=2s+2$ by Theorem $5.2$ and Theorem $6.10$.
\end{proof}

 Next we prove that for any gap free and cricket free graph $G$, and for all $s \geq 2$, $\reg (I(G)^s) = 2s$. This result is our main new result in this paper. This answers Question 1.1 partially. This also generalizes Nevo's result (Theorem $1.2$ of [12]) that for any gap free and claw free graph $G$, $\reg I(G)^2 =4$.\\
 
\begin{theorem}
For any gap free and cricket free graph $G$ and for all $s \geq 2$, $\reg (I(G)^s) = 2s$.
\end{theorem}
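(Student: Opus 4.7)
The plan is to combine the framework of Section 5 with the structural results of Section 6. By Theorem 3.4 we have $\reg I(G)\leq 3\leq 4$, so Corollary 5.3 reduces the statement to proving $\reg(I(G)^{s+1}:e_1\cdots e_s)\leq 2$ for every $s\geq 1$ and every $s$-fold product $e_1\cdots e_s$ of edges of $G$. Theorem 6.10 lets me pass to the polarization $(I(G)^{s+1}:e_1\cdots e_s)^{\p}$, which by Lemma 6.9 is the edge ideal of a graph $G'$ obtained from $G$ by adjoining an edge between every pair of distinct vertices that are even-connected with respect to $e_1\cdots e_s$, plus a whisker at each vertex even-connected to itself. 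By Fr\"oberg's criterion (Theorem 2.12) it then suffices to verify that $(G')^c$ is chordal.

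Induced $4$-cycles in $(G')^c$ are ruled out by Lemma 6.14, which shows that $G'$ inherits gap-freeness from $G$. For an induced cycle $w_1\cdots w_n$ in $(G')^c$ with $n\geq 5$, that is, an anticycle $W=\{w_1,\ldots,w_n\}$ in $G'$, Lemma 6.15 guarantees that $W$ is also an anticycle of $G$, so the induced subgraph on $W$ is $C_n^c$ and each $w_i$ has $n-3$ neighbors inside $W$ already in $G$. The remaining task is to derive a contradiction from the cricket-free hypothesis on $G$. Using $s\geq 1$, fix any edge $e_j=ab$ appearing in the product; then for every neighbor $x$ of $a$ and every neighbor $y$ of $b$ in $G$, the walk $x-a-b-y$ is an even-connection with respect to $e_1\cdots e_s$, so $xy$ must be an edge of $G'$. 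For $W$ to remain an anticycle in $G'$, no consecutive non-edge $w_iw_{i+1}$ can be realized in this way, which severely restricts how $N_G(a)\cap W$ and $N_G(b)\cap W$ can sit cyclically along $W$.

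The conclusion is then reached by a case analysis on whether $a$ or $b$ lies in $W$, combined with repeated use of gap-freeness of $G$ against the many edges $w_iw_j$ (with $|i-j|\not\equiv\pm 1\pmod n$) of $W$, to produce either (i) a forbidden consecutive pair $w_i,w_{i+1}$ that is in fact even-connected through $ab$, contradicting the anticycle structure of $G'$, or (ii) a $5$-vertex subset --- typically involving $a$, $b$ and three suitably chosen vertices of $W$ --- that induces a cricket in $G$. The main obstacle is locating the cricket in case (ii): the complement cycle $C_n^c$ is so edge-dense that most $4$-vertex subsets of $W$ already carry several edges, so one has to exploit the neighbor structure of $a,b$ carefully, possibly after first invoking Proposition 3.3 to work near a vertex of maximum degree, to isolate exactly the cricket degree sequence $(4,2,2,1,1)$.
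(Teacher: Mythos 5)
Your reduction is exactly the paper's: Theorem 3.4 plus Corollary 5.3 reduce the claim to $\reg(I(G)^{s+1}:e_1\cdots e_s)\leq 2$, Theorem 6.10 and Lemma 6.9 pass to the polarized graph $G'$, Fr\"oberg's criterion plus Lemma 6.14 dispose of induced $4$-cycles in $(G')^c$, and Lemma 6.15 reduces the remaining obstruction to an anticycle $w_1\cdots w_n$ ($n\geq 5$) of $G'$ that already lives in $G$. Up to this point the proposal is correct. But the part you defer --- ``locating the cricket'' --- is the actual content of the theorem's proof, and you have not supplied it; you describe it as an obstacle, propose a case analysis on whether $a$ or $b$ lies in $W$, and suggest invoking Proposition 3.3, none of which pins down an argument. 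That is a genuine gap: without producing the contradiction, nothing has been proved beyond what already follows for arbitrary gap-free graphs.

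For comparison, here is how the paper closes it, with no case split on membership of $x,y$ in $W$ and no appeal to Proposition 3.3. Write $e_1=xy$. Since $w_1w_3$ is an edge of $G$ (anticycle, $n\geq 5$) and $G$ is gap-free, one of $w_1,w_3$ is adjacent to $x$ or $y$; say $w_1\in N(x)$. One checks $w_2,w_n\notin\{x,y\}$ (equality with $y$ would make $w_1w_n$ or $w_1w_2$ an even-connection through $xy$, hence an edge of $G'$, contradicting the anticycle). By Observation 6.4 every neighbor of $x$ is joined in $G'$ to every neighbor of $y$, so since $w_1w_2$ and $w_1w_n$ are non-edges of $G'$, neither $w_2$ nor $w_n$ is a neighbor of $y$; gap-freeness against the edge $w_2w_n$ then forces, say, $w_2\in N(x)$. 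Repeating the argument with the edge $w_3w_n$ gives $w_3\in N(x)$, and $y\notin N(w_1)$ for the same reason. Then $\{y,w_2,x,w_1,w_3\}$ induces a cricket: $x$ is adjacent to $y,w_1,w_2,w_3$, the triangle is $xw_1w_3$, and all of $yw_1,yw_2,yw_3,w_1w_2,w_2w_3$ are non-edges. This three-step ``march along consecutive anticycle vertices, all forced into $N(x)$'' is the idea your proposal is missing.
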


\begin{proof}
 In light of Theorem $2.12$, Theorem $3.4$, Corollary $5.3$, Theorem $6.10$ and Lemma $6.14$, it is enough to show the polarization of $(I(G)^{s+1}:e_1....e_s)$ does not have any anticycle $w_1....w_n$ for $n \geq 5, s \geq 1$, for every $s$-fold product $e_1....e_s$.\\ 
 
  Suppose $w_1...w_n$, $n\geq 5$, is an anticycle in the polarization of $(I^{s+1}:e_1...e_s)$ and $e_1=xy$. By Lemma $6.15$ $w_1....w_n$ is also an anticycle of $G$. Either $w_1$ or $w_3$ is a neighbor of $x$ or neighbor of $y$ else $w_1 w_3$ and $e_1$ forms a gap in $G$, a contradiction. Without loss of generality, we may assume $w_1$ is a neighbor of $x$. Now neither $w_2$ nor $w_n$ can be $x$ as they are not connected to $w_1$; also neither of them are $y$ as if
say $y=w_2$ then $w_nxyw_1$ is an even connection hence $w_1w_n$ is an edge in $G'$, a contradiction to the assumption on anticycle; similar thing happens if $y=w_n$. By Observation $6.4$ every neighbor of $y$ is connected to every neighbor of $x$ in $G'$. As neither $w_1 w_n$, nor $w_1 w_2$ is an edge in $G'$, neither of $w_2$ and $w_n$ are neighbors of $y$ in $G$. So one of them has to be neighbor of $x$, as $G$ is gap free. Again, without loss of generality, we may assume $w_2$ is a neighbor of $x$. Next we consider $w_3 w_n$. As $w_1$ and $w_2$ are neighbors of $x$ and neither $w_1w_n$ nor $w_2w_3$ are edges in $G'$, by Observation $6.4$ neither $w_3$ nor $w_n$ can be neighbor of $y$. Neither $w_3$ nor $w_n$ can be $x$ as they are $w_2w_3$ and $w_1w_n$ are not edges in $G'$. If $w_3=y$, as $w_1 w_3$ is an edge in $G$, $w_1$, being a neighbor of $y$, has to be connected to $w_2$, which is a neighbor of $x$ in $G'$ by Observation $6.4$. That will force $w_1 w_2$ to be an edge in $G'$, which is a contradiction. Similarly if $w_n=y$, $w_3$ being a neighbor of $y$ has to be connected to $w_2$ in $G'$ leading to a contradiction. Then either $w_3$ or $w_n$ has to be a neighbor of $x$. Without loss of generality we may assume $w_3$ is a neighbor of $x$. Notice that $y$ is not connected to $w_1$ in $G$ as that will force $w_2$, a neighbor of $x$ to be connected to $w_1$ in $G'$ leading to a contradiction. Hence $\{y,w_2,x,w_1,w_3 \}$ forms a cricket.
\end{proof}

  Next we prove that for any gap free graph $G$ with $\reg(I(G))=r$, the $\reg(I(G)^s)$ is bounded above by $2s+r-1$. But to do that we need a lemma about $``$longest" connections. Observe that if $G'$ is the graph associated to the polarization of $(I(G)^{s+1}: e_1...e_s)$, for some $s$-fold product, and $u$,$v$ are even-connected with respect to $u=p_0,....,p_{2k+1}=v$, then $uv$ is not only an edge in $G'$ but also an edge in the graph $(G'-\{y_1,...y_l\})$ for any set of points $y_1,....,y_l$ as long as $u,v \notin \{y_1,....,y_l\}$. We further emphasize that some of the $p_i$s can also belong to $\{y_1,....,y_l\}$ as long as they are not same as $u$ or $v$. \\

\begin{lemma}
Let $G'$ be the graph associated to the polarization of $(I(G)^{s+1}: e_1...e_s)$ for some $s$-fold product. Let us assume $u$,$v$ are even-connected with respect to $u=p_0,....,p_{2k+1}=v$. Suppose for some set of vertices $\{y_1,....,y_l\}$ we have $u,v \notin \{y_1,....,y_l\}$. Let us also assume for any other even-connection $u'=p'_0,....,p'_{2k'+1}=v'$ such that $u',v' \notin \{y_1,....,y_l\}$ we have $k' \leq k$. Then $(G'-\{y_1,....,y_l\} -\st u)$ is $G'' \cup \text{\{isolated whisker vertices\}}$, where $G''$ is a subgraph of $G$ obtained by deleting vertices.
\end{lemma}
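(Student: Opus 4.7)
The plan is to verify two sub-claims: (i) every edge $ab$ of $(G'-\{y_1,\dots,y_l\}-\st u)$ with $a,b\in V(G)$ and $a\neq b$ is already an edge of $G$; (ii) every surviving whisker vertex $c'$ is isolated, i.e.\ its unique $G'$-neighbor $c$ has been deleted. Together these decompose the remaining graph as a vertex-deletion subgraph $G''$ of $G$ together with some isolated whiskers.

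For (i), I would argue by contradiction: fix $ab\notin G$ with $a\neq b$ and an even-connection $a=q_0,\dots,q_{2r+1}=b$. The first move is Lemma~6.13 applied to the fixed $p$-path and this $q$-path: if any $p_{2i+1}p_{2i+2}$ shares a vertex with any $q_{2j+1}q_{2j+2}$, then $u$ is even-connected to $a$ or to $b$, placing that endpoint in $\st u$, a contradiction. So the two paths' $e$-edge vertex sets are disjoint, and in particular they use edge-disjoint subsets of $\{e_1,\dots,e_s\}$.

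Next I would eliminate the remaining ways the paths can overlap. If $u$ lies on the $q$-path at $q_{2j+1}$ or $q_{2j+2}$, then a sub-segment of $q_\bullet$ is itself an even-connection from $u$ to $a$ or to $b$, contradicting $a,b\notin\st u$. If $a=p_{2i+1}$, Observation~6.4 places $a\in\st u$. If $a=p_{2i+2}$, splicing the first $2i+2$ vertices of $p_\bullet$ onto $q_\bullet$ gives a valid even-connection $u\to b$; the ``free--$e$'' alternation survives the splice because the two halves use disjoint $e$-edges by the previous step. Symmetric splicing arguments cover $b$ on $p_\bullet$ and $v$ on $q_\bullet$. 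After these reductions I may assume the $p$- and $q$-paths are fully vertex-disjoint.

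The main obstacle is this fully-disjoint case, and here I would invoke the standing gap-freeness hypothesis of Section~6 (used earlier in Lemma~6.14 and Lemma~6.15): any two vertex-disjoint $G$-edges are joined by a third. Applying this to well-chosen pairs of edges from $p_\bullet$ and $q_\bullet$ (an $e$-edge of one against an $e$-edge or a free edge of the other, such as $p_{2k-1}p_{2k}$ against $q_1q_2$, or $p_1p_2$ against $aq_1$) produces a $G$-edge linking the paths, which I can splice in to form a longer walk. Placing the new edge at an even-index slot of the walk preserves the ``free--$e$--free--$e$'' alternation, so the result is a valid even-connection from $u$ to $a$ or to $b$ of length $>2k+1$. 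This both places $a$ or $b$ in $\st u$ and contradicts the maximality of $k$. Verifying that at least one such configuration always fits the parity is the subtlety, which I would handle by a short case analysis over the four candidate endpoints of the gap-free edge.

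The whisker case (ii) is parallel, applied to a self-even-connection $c=q_0,\dots,q_{2r+1}=c$ with $c\neq u$ and $c\notin\{y_1,\dots,y_l\}\cup\st u$: each step above transfers verbatim with $a=b=c$, and the splicing forces $c\in\st u$, the desired contradiction.
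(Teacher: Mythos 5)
Your strategy is the paper's: dispose of shared $e$-edges via Lemma 6.13, then use gap-freeness to produce a $G$-edge between the two connections and splice. (You are also right that gap-freeness, though missing from the lemma's statement, must be assumed; the paper's proof uses it.) The problem is that the step you explicitly defer --- ``verifying that at least one such configuration always fits the parity\dots\ I would handle by a short case analysis'' --- is where the entire content of the proof lives, and the candidate configurations you name suggest you have not located the right one. Pairing $p_{2k-1}p_{2k}$ with $q_1q_2$ is off target: splicing there yields connections involving $v$, hence membership in $\st v$, while the lemma only deletes $\st u$. The paper applies gap-freeness to exactly one pair, the \emph{first} $e$-edges $p_1p_2$ and $q_1q_2$, which are vertex-disjoint once the Lemma 6.13 case is excluded. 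The resulting dichotomy is clean and has no parity trouble: if $q_1$ (resp.\ $q_2$) is adjacent to $p_1$, then $b=q_{2r+1},\dots,q_1,p_1,\dots,p_{2k+1}=v$ (resp.\ $a=q_0,q_1,q_2,p_1,\dots,p_{2k+1}=v$) is an even-connection between two surviving vertices with parameter exceeding $k$, contradicting maximality; if $q_1$ (resp.\ $q_2$) is adjacent to $p_2$, then $u,p_1,p_2,q_1,\dots,q_{2r+1}=b$ (resp.\ $u,p_1,p_2,q_2,q_1,a$) is an even-connection starting at $u$, so $b$ (resp.\ $a$) lies in $\st u$. These are two \emph{different} conclusions in two different sub-cases; your sketch conflates them (``this both places $a$ or $b$ in $\st u$ and contradicts the maximality''), and in particular misses that the maximality hypothesis is needed precisely to kill the attach-at-$p_1$ sub-case, which produces no connection to $u$ at all. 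Condition 3 for every spliced walk holds because, after the Lemma 6.13 step, the two connections use disjoint sets of $e$-edges.

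A secondary point: your intermediate reduction to fully vertex-disjoint paths is unnecessary --- the paper never needs it --- and some of its steps are themselves delicate; for instance, a sub-walk of $q_0,\dots,q_{2r+1}$ beginning at an odd-indexed vertex $q_{2j+1}$ has its $e$-edges in even slots, so it is not an even-connection as defined and cannot simply be truncated. Since the only interaction the argument ever uses is between the single edge pair $p_1p_2$, $q_1q_2$, you can drop this reduction entirely. Your sub-claim (ii) for whiskers is fine and is exactly how the paper treats the case $u'=v'$.
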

  
\begin{proof}
For the set of points $\{y_1,....,y_l\}$, $uv$ is an edge in $(G'-\{y_1,...,y_l\})$ such that $u,v \notin \{y_1,....,y_l\}$ are even-connected with respect to $e_1....e_s$ via $u=p_0,p_1,p_2,..\\..,p_{2k+1}=v$. We also have that $k$ is maximum over all such even-connected edges in $(G'-\{y_1,...,y_l\})$. Let $u'v'$ be any edge in $(G'-\{y_1,...,y_l\})$ such that $u',v' \notin \{y_1,....,y_l\}$ and they are even-connected with respect to $e_1....e_s$ via $u'=x_0,x_1,x_2,....,x_{2k'+1}=v'$. If for any $j,j'$, $p_{2j+1} p_{2j+2}$ and $x_{2j'+ 1} x_{2j'+2}$ form the same edge in $G$ then by Lemma $6.13$, either $u'$ or $v'$ will be not a vertex in $(G'-\{y_1,....,y_l\}-\st u)$. Now observe, if for any $j,j'$, $p_{2j+1} p_{2j+2}$ and $x_{2j'+ 1} x_{2j'+2}$ do not form same edge in $G$ then either $x_1$ or $x_2$ has to be connected to $p_1$ or $p_2$ to avoid $x_1 x_2$ and $p_1 p_2$ forming a gap. If any of them (for example $x_1$) is connected to $p_1$ in $G$ that will make $\{v'=x_{2k'+1},x_{2k'},...,x_1,p_1,....,p_{2k+1}\}$ a longer connection violating the maximality of $k$. A similar thing happens if $x_2$ is connected to $p_1$ in $G$. So either of them has to be connected to $p_2$. If $x_1$ is connected to $p_2$ in $G$ then $u$ is connected to $v'$ in $G'$ as $u,p_1,p_2,x_1,...,x_{2k'+1}=v'$ will be an even-connection. Similarly if $x_2$ is connected to $p_2$ then $u$ is connected to $u'$ in $G'$ as $u,p_1,p_2,x_2,x_1,u'$ will be an even-connection. In both the cases either $u'$ or $v'$ will not be a vertex in $(G'-\{y_1,....,y_l\}-\st u)$. This proves that any edge in $(G'-\{y_1,....,y_l\}-\st u)$ is an edge in $G$. Hence the Lemma follows.
\end{proof}

 Using Lemma $6.18$ we prove the next theorem which guarantees that the gap between the regularity of powers of edge ideals of gap free graphs and the regularity of monomial ideals generated in the same degree and having a linear resolution, can not be arbitrarily large:
  
\begin{theorem}
For any gap free graph $G$ with $\reg(I(G))=r$ and any $s \geq 2$ the $\reg(I(G)^s)$ is bounded above by $2s+r-1$.
\end{theorem}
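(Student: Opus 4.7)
The plan is to induct on $s$ via Theorem 5.2, which reduces the problem to the following \emph{key claim}: for every $s \geq 1$ and every $s$-fold product of edges $e_1 \cdots e_s$ in $G$,
\[
\reg((I(G)^{s+1} : e_1 \cdots e_s)) \leq r + 1.
\]
Granting this, since every minimal generator of $I(G)^s$ is an $s$-fold product of edges, Theorem 5.2 yields $\reg(I(G)^{s+1}) \leq \max\{(r+1) + 2s,\ \reg(I(G)^s)\}$. Starting from the trivial bound $\reg(I(G)) = r \leq r+1$, a straightforward induction on $s$ gives $\reg(I(G)^s) \leq 2s + r - 1$ for all $s \geq 2$.

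For the key claim I pass to the polarization: by Theorem 6.10 and Lemma 6.11, $\reg((I(G)^{s+1} : e_1 \cdots e_s)) = \reg(I(G'))$ for the graph $G'$ described in Lemma 6.11, and by Lemma 6.14 this $G'$ is gap-free. I will prove the slightly stronger statement that for every induced subgraph of the form $H = G' - \{y_1,\ldots,y_l\}$ one has $\reg(I(H)) \leq r + 1$, by induction on $|V(H)|$. If no even-connection with respect to $e_1 \cdots e_s$ has both endpoints in $V(H)$, then by Theorem 6.7 every edge of $H$ is an edge of $G$, so $H$ is, up to isolated whisker vertices, a subgraph of $G$ obtained by deleting vertices, and iterated application of Lemma 2.9 gives $\reg(I(H)) \leq r$. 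Otherwise, select $u \in V(H)$ to be an endpoint of a \emph{longest} even-connection whose endpoints both lie in $V(H)$. Lemma 3.2 then gives
\[
\reg(I(H)) \leq \max\{\reg(I(H - \st u)) + 1,\ \reg(I(H - u))\},
\]
and Lemma 6.18 --- applied with deletion set $\{y_1,\ldots,y_l\}$ and this choice of $u$ --- tells us that $H - \st u$ is, up to isolated whisker vertices, a subgraph of $G$ obtained by vertex deletions, so $\reg(I(H - \st u)) \leq r$ by Lemma 2.9. The remaining term $\reg(I(H - u))$ is at most $r+1$ by the inner inductive hypothesis on vertex count, completing the claim.

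The main technical point is the choice of $u$ at each recursive step: to invoke Lemma 6.18 we must take $u$ to be an endpoint of a \emph{longest} even-connection in the current subgraph $H$, not merely any endpoint, and we must carry the accumulated deletion set $\{y_1, \ldots, y_l\}$ along the recursion. Because Lemma 6.18 is already stated in precisely this generality, the inner induction on vertex count pushes through cleanly, and combining with the outer induction on $s$ delivers the bound $\reg(I(G)^s) \leq 2s + r - 1$.
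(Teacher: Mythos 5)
Your proposal is correct and follows essentially the same route as the paper: reduce via Theorem 5.2 to bounding $\reg(I(G)^{s+1}:e_1\cdots e_s)$ by $r+1$, pass to the polarized graph $G'$, and repeatedly strip off the star of an endpoint of a longest even-connection using Lemma 3.2 together with Lemma 6.18, so that each $-\st u$ branch lands in a vertex-deleted subgraph of $G$ (plus isolated whisker vertices) of regularity at most $r$. Your only real departure is presentational --- you package the paper's iterative selection of $u_1,u_2,\dots$ as a clean induction on $|V(H)|$ over subgraphs $H=G'-\{y_1,\dots,y_l\}$, which is arguably a tidier formalization of the same argument (and note the vertex-deletion inequality you invoke is the paper's Lemma 2.10, not 2.9).
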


\begin{proof}
 Let $G'$ be the graph associated to the polarization of $(I(G)^{s+1}: e_1....e_s)$. We have $\reg(G') \leq \max \{\reg(G'-\st x)+1, \reg(G'-x)\}$ by Lemma $3.2$ for each vertex $x$. We choose $u_1$ and $v_1$ even connected by $u_1=p_0,....,p_{2k_1+1}=v_1$ such that $k_1$ is maximum. By Lemma $6.18$ $(G'-\st u_1)$ is a subgraph of $G$ obtained by vertex deletion along with some isolated whisker vertices. As isolated vertices do not affect the regularity of edge ideal, $\reg((G'-\st u_1) \leq r$ by Lemma $2.10$.\\
 
 Next we apply Lemma $3.2$ on $(G'-u_1)$, from which we delete a vertex $u_2$ which is even-connected to another vertex $v_2$ via $u_2=q_0,....,q_{2k_2+1}=v_2$ with $k_2$ maximum. Again by Lemma $6.18$ $(G'-u_1-\st u_2)$ is a subgraph obtained from $G-u_1$ by deletion of vertices along with some whisker vertices. Hence $\reg (G'-u_1-\st u_2) \leq r$. We keep selecting $u_1,u_2,...$ and apply Lemmas $3.2$ and $6.18$. As we are in a finite setup, for
some $l, (G'-{u_1,...,u_l})$ itself is a subgraph of $G$  obtained by repeated vertex deletion along with some isolated whisker vertices and $reg(G') \leq r+1$. Therefore, by Theorem $5.2$ and induction the result follows. 
\end{proof}

\textbf{Acknowledgements.} The author is very grateful to his advisor C. Huneke for constant support, valuable ideas and suggestions throughout the project. The author thanks G. Caviglia, A. De Stefani, J. Martin, J. Mermin and T. Vu for valuable discussions. The author also thanks the reviewers for suggesting many improvements and one anonymous reviewer for suggesting a simpler proof of Observation $4.10$.

\end{document}